\documentclass[reqno]{amsart}

\usepackage{amsmath}
\usepackage{amssymb}
\usepackage{mathdots}
\usepackage[bb=boondox]{mathalfa}
\usepackage{mathrsfs}
\usepackage{tikz}
\usepackage{mathabx}

\newif\ifstartedinmathmode
\newcommand\encircled[1]{%
  \relax\ifmmode\startedinmathmodetrue\else\startedinmathmodefalse\fi%
  \tikz[baseline,anchor=base]{%
  \node[draw=red,circle,outer sep=0pt,inner sep=.2ex]
    {\ifstartedinmathmode$#1$\else#1\fi};}%
}
\theoremstyle{plain}
\newtheorem{theorem}{Theorem}[section]

\newtheorem{lemma}[theorem]{Lemma}
\newtheorem{corollary}[theorem]{Corollary}
\newtheorem{proposition}[theorem]{Proposition}

\theoremstyle{definition}

\newtheorem{example}[theorem]{Example}
\newtheorem{question}[theorem]{Question}

\numberwithin{equation}{section}

\allowdisplaybreaks[1]


\newcommand{\BC}{{\mathbb C}}
\newcommand{\BF}{{\mathbb F}}
\newcommand{\BH}{{\mathbb H}}

\newcommand{\BL}{{\mathbb L}}
\newcommand{\BN}{{\mathbb N}}

\newcommand{\BR}{{\mathbb R}}


\newcommand{\cC}{{\mathcal C}}\newcommand{\cD}{{\mathcal D}}
\newcommand{\cE}{{\mathcal E}}
\newcommand{\cH}{{\mathcal H}}

\newcommand{\cL}{{\mathcal L}}

\newcommand{\cO}{{\mathcal O}}\newcommand{\cP}{{\mathcal P}}
\newcommand{\cR}{{\mathcal R}}
\newcommand{\cS}{{\mathcal S}}
\newcommand{\cV}{{\mathcal V}}
\newcommand{\cW}{{\mathcal W}}
\newcommand{\cZ}{{\mathcal Z}}


\newcommand{\fA}{{\mathfrak A}}
\newcommand{\fC}{{\mathfrak C}}

\newcommand{\fS}{{\mathfrak S}}\newcommand{\fT}{{\mathfrak T}}

\newcommand{\fY}{{\mathfrak Y}}

\newcommand{\wtilB}{\widetilde{B}}

\newcommand{\wtilK}{\widetilde{K}}

\newcommand{\whatA}{\widehat{A}}

\newcommand{\whatK}{\widehat{K}}

\newcommand{\al}{\alpha}
\newcommand{\be}{\beta}

\newcommand{\la}{\lambda}

\newcommand{\Up}{\Upsilon}

\newcommand{\rank}{\textup{rank\,}}

\newcommand{\kr}{\textup{Ker\,}}
\newcommand{\diag}{\textup{diag}}

\newcommand{\mat}[1]{\begin{bmatrix} #1 \end{bmatrix}}
\newcommand{\sbm}[1]{\left[\begin{smallmatrix} #1\end{smallmatrix}\right]}
\newcommand{\ov}[1]{{\overline{#1}}}
\newcommand{\un}[1]{{\underline{#1}}}

\newcommand{\tu}[1]{\textup{#1}}
\newcommand{\wtil}[1]{{\widetilde{#1}}}
\newcommand{\what}[1]{{\widehat{#1}}}

\newcommand{\ands}{\quad\mbox{and}\quad}


\newcommand{\BBone}{\mathbb{1}}
\newcommand{\OneVec}{\vec{\mathbf{1}}}

\newcommand{\vect}{\operatorname{vec}}


\begin{document}

\title[A Hill-Pick matrix criteria for the Lyapunov order]{A Hill-Pick matrix criteria for the Lyapunov order}

\author[S. ter Horst]{S. ter Horst}
\address{S. ter Horst, Department of Mathematics, Research Focus Area:\ Pure and Applied Analytics, North-West University, Potchefstroom, 2531 South Africa and DSI-NRF Centre of Excellence in Mathematical and Statistical Sciences (CoE-MaSS)}
\email{Sanne.TerHorst@nwu.ac.za}

\author[A. van der Merwe]{A. van der Merwe}
\address{A. van der Merwe, Faculty of Engineering and the Built Environment, Academic Development Unit, University of the Witwatersrand, Johannesburg, 2000 South Africa and DSI-NRF Centre of Excellence in Mathematical and Statistical Sciences (CoE-MaSS)}
\email{alma.naude1@wits.ac.za}

\thanks{This work is based on the research supported in part by the National Research Foundation of South Africa (Grant Number 90670 and 118513).}

\subjclass[2010]{Primary 93D30, 46L07, 47L07; Secondary 15A04, 15A39, 15B48, 47A57, 30E05, 15B05, 93D05}

%

\keywords{Lyapunov order, bicommutant of a matrix, Hill matrix, Pick matrix, positive matrix maps, completely positive matrix maps, Lyapunov equation, common Lyapunov solutions}

\begin{abstract}
The Lyapunov order appeared in the study of Nevanlinna-Pick interpolation for positive real odd functions with general (real) matrix points. For real or complex matrices $A$ and $B$ it is said that $B$ Lyapunov dominates $A$ if
\begin{equation*}
H=H^*,\quad   HA+A^*H \geq 0 \quad \implies \quad HB+B^*H \geq 0.
\end{equation*}
(In case $A$ and $B$ are real we usually restrict to real Hermitian matrices $H$, i.e., symmetric $H$.) Hence $B$ Lyapunov dominates $A$ if all Lyapunov solutions of $A$ are also Lyapunov solutions of $B$. In this paper we restrict to the case that appears in the study of Nevanlinna-Pick interpolation, namely where $B$ is in the bicommutant of $A$ and where $A$ is Lyapunov regular, meaning the eigenvalues $\lambda_j$ of $A$ satisfy
\begin{equation*}
    \lambda_i + \ov{\lambda}_j \ne 0, \quad i,j=1,\ldots,n.
\end{equation*}
In this case we provide a matrix criteria for Lyapunov dominance of $A$ by $B$. The result relies on a class of $*$-linear maps for which positivity and complete positivity coincide and a representation  of $*$-linear matrix maps going back to work of R.D. Hill. The matrix criteria asks that a certain matrix, which we call the Hill-Pick matrix, be positive semidefinite.
\end{abstract}

\maketitle

\section{Introduction}

Throughout this paper $\BF=\BC$ or $\BF=\BR$. To avoid confusion about transposes or adjoints, symmetric and Hermitian matrices, etc., we shall use notation and terminology as if $\BF=\BC$. We write $\cH_n$ for the set of $n \times n$ Hermitian matrices, $\cP_n$ for the set of all positive definite matrices in $\cH_n$ and $\ov{\cP}_n$ for the set of all positive semidefinite matrices in $\cH_n$. Given $A\in\BF^{n\times n}$, we call $H\in\cH_n$ a solution to the Lyapunov inequality of $A$ (Lyapunov solution of $A$ for short) if
\[
HA + A^*H \in \cP_n\ \ (\text{strict case}) \quad \text{or} \quad HA + A^*H \in \ov{\cP}_n\ \ (\text{non-strict case}).
\]
The corresponding solution sets are denoted
\[
\cH(A)=\{H \in \cH_n: HA + A^*H \in \cP_n\} \quad \text{and} \quad  \ov{\cH}(A)=\{H \in \cH_n: HA + A^*H \in \ov{\cP}_n\}.
\]
Note that these are {\em convex invertible cones}, that is, convex cones that are closed under inversion. Lyapunov inequalities play an important role in linear systems and control theory, cf., \cite{ZDG96,DP00,CL97}. The matrix $A$ is said to have {\em regular inertia} if $A$ has no eigenvalues on the imaginary axis $i\BR$. Moreover, we say that $A$ is {\em Lyapunov regular} if the eigenvalues $\la_1, \ldots, \la_n$ of $A$ satisfy
\[
\la_i+\ov{\la}_j \ne 0,\qquad i,j=1, \ldots,n.
\]

In \cite{CL09} Cohen and Lewkowicz introduced the {\em Lyapunov order} (for $\BF=\BR$), namely, for matrices $A,B \in \BF^{n \times n}$, it is said that $B$ {\em Lyapunov dominates} $A$, denoted $A \le_{\cL} B$, if
\begin{equation}\label{LyapOrder1}
H \in \cH_n, \quad HA+A^*H \in \ov{\cP}_n \implies HB+B^*H \in \ov{\cP}_n,
\end{equation}
or equivalently
\begin{equation}\label{LyapOrder2}
\ov{\cH}(A) \subseteq \ov{\cH}(B).
\end{equation}
Hence the Lyapunov order $A \le_\cL B$ means that all non-strict Lyapunov solutions of $A$ are also Lyapunov solutions of $B$.

More precisely, in \cite{CL09} the Lyapunov order was studied under some natural constraints on $A$ and $B$ that appear in the context of Nevanlinna-Pick interpolation, see also \cite{CL07}. For now restrict to $A,B\in\BR^{n \times n}$. Write $\cC(A)$ for the convex invertible cone generated by $A$, $\cP\cR\cO$ for the class of {\em positive real odd rational functions}, that is, rational functions with real coefficients that map the right half-plane into the right half-plane, and define $\cP\cR\cO(A)=\{ f(A) \colon f\in \cP\cR\cO \}$. Then $\cP\cR\cO(A)=\cC(A)$, and $B\in \cP\cR\cO(A)$ implies that $B$ is in the bicommutant $\{A\}_{\BR}''$ of $A$ and $A \le_{\cL} B$; cf., \cite{CL07,CL09} for further details. It is conjectured in \cite{CL09} that the converse implication also holds when $A$ is Lyapunov regular, but to the best of our knowledge there is no prove of this claim yet. In Subsection 1.4 of \cite{CL09} a Pick test to verify whether $A \le_{\cL} B$ is discussed. In the general setting, this Pick test corresponds to verifying whether a representative of each extreme ray of $\cH(A)$ is also in $\cH(B)$, while for $B\in\{A\}_{\BR}''$ it suffices to test this for a representative of a single extreme ray.

The main contribution of the present paper is that, by different methods, for both $\BF=\BC$ and $\BF=\BR$, given $B\in\{A\}_{\BF}''$ with $A$ Lyapunov regular, we determine a single matrix $\BH_{A,B}$ of size at most $n \times n$ so that $A \le_{\cL} B$ corresponds to $\BH_{A,B}$ being positive semidefinite. The matrix $\BH_{A,B}$ is obtained from what we called Hill representations for $*$-linear maps in \cite{tHvdM21b}, going back to work of R.D. Hill in \cite{H73}, and coincides with the classical Pick matrix in the case that $A$ and $B$ are diagonal matrices \cite{YS67}.

Determining when two matrices $A$ and $B$ have a common Lyapunov solution, that is, $\cH(A)\bigcap \cH(B) \ne \emptyset$ or $\ov{\cH}(A)\bigcap \ov{\cH}(B) \ne \emptyset$, is a notoriously difficult problem, which has only been resolved in special cases, cf., \cite{MS05,BG15,A01,H98,LS07,LS09,CL03} and references given there. While the inclusion condition $\ov{\cH}(A) \subseteq \ov{\cH}(B)$ of the Lyapunov order is a different problem, we expect that similar issues may arise when no further constraints are added; as mentioned above, the Pick test of \cite{CL09} for the general problem, with $A$ Lyapunov regular, involves determining the extreme rays of the Lyapunov solution set $\ov{\cH}(A)$. The essential condition in the present paper is that $B$ is in the bicommutant of $A$, which implies $B$ has a structure compatible with the Jordan structure of $A$; see Section \ref{LAP} below.

We further point out here that there is an analogous matrix order based on the Stein inequality $H-A^*HA\in \cP_n$, called the Stein order, which appears in the context of Nevanlinna-Pick interpolation in the unit disk, rather than the half-plane, which has also been extended to noncommutative several variable interpolation; see \cite{A04,BtH10,MS12,BMV18} for further details.

Given a square matrix $Y \in \BF^{n \times n}$, we define the {\em Lyapunov operator} associated with $Y$ by
\begin{equation}\label{LyapOp}
\cL_Y: \BF^{n \times n} \to \BF^{n \times n}, \quad \cL_Y(X)=XY + Y^*X, \quad X \in \BF^{n \times n}.
\end{equation}
The Lyapunov operator associated with $Y$ is a linear matrix map which is bijective precisely when $Y$ is Lyapunov regular, cf., \cite[Corollary 4.4.7]{HJ91}. It is clear that $\cL_Y(X^*)=\cL_ Y(X)^*$ for each $X\in \BF^{n \times n}$, so that $\cL_Y$ is $*$-linear (i.e., linear and preserves adjoints), so that, in particular, $\cL_Y$ maps $\cH_n$ into $\cH_n$; cf., \cite{tHvdM21b}.

Using the Lyapunov operator we can write the Lyapunov solution sets as
\[
\cH(A)=\cL_A^{-1}\left(\cP_n\right) \quad \text{and} \quad \ov{\cH}(A)=\cL_A^{-1}\left(\ov{\cP}_n\right).
\]
If $A\in\BF^{n \times n}$ is Lyapunov regular, $B\in\BF^{n \times n}$ Lyapunov dominates $A$ whenever the $*$-linear map
\begin{equation}\label{cL_AB}
\cL_{A,B}=\cL_B  \cL_A^{-1}: \BF^{n \times n}\to \BF^{n \times n}
\end{equation}
maps $\ov{\cP}_n$ into $\ov{\cP}_n$, that is, when $\cL_{A,B}$ is a positive linear matrix map. Determining whether a linear matrix map is positive is in general not easy. In contrast, it is easy to verify whether a linear matrix map is completely positive, by verifying that the associated Choi matrix is positive semidefinite. Complete positivity implies positivity, so that the Choi matrix criteria provides a sufficient condition, in general. However, one of the main results of this paper shows that in the specific case studied in this paper, the Choi matrix criteria is also necessary.

\begin{theorem}\label{main thm}
Let $A,B \in \BF^{n \times n}$ with $A$ Lyapunov regular and $B \in \{A\}_{\BF}''$ . Then
\begin{equation*}
\cL_{A,B} \text{ is positive if and only if } \cL_{A,B} \text{ is completely positive.}
\end{equation*}
Hence $\cL_{A,B}$ is positive if and only if the Choi matrix $\BL_{A,B} \in \BF^{n^2 \times n^2}$ of $\cL_{A,B}$ is positive semidefinite.
 \end{theorem}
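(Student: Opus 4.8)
The plan is to exploit the structure imposed by $B \in \{A\}_{\BF}''$ together with Lyapunov regularity to reduce the positivity question to a single, classically understood object. First I would invoke the Hill representation machinery from \cite{tHvdM21b}: since $\cL_{A,B}$ is $*$-linear, it admits a representation of the form $\cL_{A,B}(X) = \sum_{k} \epsilon_k V_k X V_k^*$ with signs $\epsilon_k \in \{\pm 1\}$, or more precisely a representation through a Hermitian "Hill matrix" whose signature governs complete positivity. The key reduction is to observe that the bicommutant condition forces $B$ to be a polynomial in $A$ (on each Jordan block, with matching block structure as noted in Section~\ref{LAP}), so $\cL_{A,B} = \cL_B \cL_A^{-1}$ inherits a very rigid form: on the appropriate basis adapted to the Jordan structure of $A$, the map $\cL_{A,B}$ becomes (unitarily) equivalent to a map built from the action of $A$ and $B$ on generalized eigenspaces. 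When $A$ is diagonalizable with eigenvalues $\lambda_i$, one computes directly that $\cL_{A,B}$ acts on the entry $X_{ij}$ by multiplication by $\dfrac{B\text{-factor}}{\lambda_i + \ov{\lambda_j}}$, which is exactly the Pick-type kernel; positivity of this diagonal (in the eigenbasis) scaling map is equivalent to positivity of the corresponding Pick matrix, and for a map that merely rescales matrix entries, positivity and complete positivity coincide because the Choi matrix is then a rank-structured Gram-type matrix.

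The main steps, in order, would be: (1) use $B \in \{A\}_{\BF}''$ and Lyapunov regularity (so $\cL_A^{-1}$ exists) to bring $A$ into Jordan form and describe the compatible form of $B$; (2) compute an explicit Hill representation of $\cL_{A,B}$ in the coordinates adapted to this Jordan structure, identifying the relevant "Hill-Pick matrix" $\BH_{A,B}$; (3) show that in these coordinates $\cL_{A,B}$ belongs to the special class of $*$-linear maps — those of the form $X \mapsto \sum_k c_k\, P_k X P_k^*$ for suitable (not necessarily orthogonal) idempotent-like building blocks coming from the spectral/Jordan decomposition — for which positivity is already known (from the companion results quoted in the introduction, the class where positivity $=$ complete positivity); (4) conclude that $\cL_{A,B}$ positive $\iff$ $\cL_{A,B}$ completely positive $\iff$ $\BL_{A,B} \geq 0$, where the last equivalence is the standard Choi criterion. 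The chain of equivalences then yields the theorem.

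The hard part will be step (3): verifying that the bicommutant hypothesis really does land $\cL_{A,B}$ in the distinguished class of $*$-linear maps where positivity is automatically completely positive. For diagonalizable $A$ this is essentially bookkeeping — the map is a Schur-type multiplier composed with a congruence, and such maps have tractable Choi matrices. The genuine difficulty is the non-diagonalizable case: when $A$ has nontrivial Jordan blocks, $\cL_A^{-1}$ is no longer a simple entrywise rescaling, and one must check that the interplay between the nilpotent parts of $A$ and $B$ (which commute and are polynomially related by the bicommutant condition) does not destroy the special structure. I expect this to require a careful induction on the block sizes, or a limiting/perturbation argument passing from the diagonalizable case, using that Lyapunov regularity is an open condition and that positivity of $\cL_{A,B}$ and positive semidefiniteness of $\BL_{A,B}$ are both closed conditions. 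Once the structural lemma is in place, the equivalence with the Choi matrix criterion is immediate from the general theory.
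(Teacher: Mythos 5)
Your outline correctly reduces the second equivalence to the Choi criterion, and your treatment of the diagonalizable case is sound in spirit: writing $\cL_{A,B}$ as a congruence conjugate of a Schur-multiplier map, for which positivity and complete positivity classically coincide, is exactly what happens when $A$ is diagonalizable and recovers the Pick matrix. But the core of the theorem is the non-diagonalizable case, and there your proposal has a genuine gap. The class of maps for which ``positivity $=$ complete positivity'' that you invoke in step (3) is not an off-the-shelf result you can cite: the relevant statement is Theorem \ref{T:Main2} of this paper, whose hypothesis is that the matricization $L$ lies in $\ov{\{C\}_{\BF}''}\otimes\{C\}_{\BF}''$ for some matrix $C$, not that $\cL$ has the form $X\mapsto\sum_k c_k P_kXP_k^*$ with idempotent-like blocks. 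The actual mechanism is entirely different from what you sketch: one shows $L_A,L_B$, hence $L_A^{-1}$ and $L_{A,B}=L_BL_A^{-1}$, lie in the algebra $\ov{\{A^*\}_{\BF}''}\otimes\{A^*\}_{\BF}''$, and then proves that the bicommutant algebra has property (C1)/(C2) (for any linearly independent $X_1,\ldots,X_k$ in it there is a single vector $v$ with $X_1v,\ldots,X_kv$ independent, proved block-by-block for triangular Toeplitz algebras with $v=e_1$). This makes the bilinear map $(z,x)\mapsto\whatA(z\otimes x)$ attached to a minimal Hill representation surjective, so positivity tested only on rank-one tensors already forces the Hill matrix, hence the Choi matrix, to be positive semidefinite. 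None of this appears in your plan, and without it step (3) is an unproven structural claim.

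The fallback you propose for Jordan blocks — a limiting/perturbation argument from the diagonalizable case — does not close the gap, because it runs in the wrong direction. To use it you would approximate $(A,B)$ by $(A_\epsilon,B_\epsilon)$ with $A_\epsilon$ diagonalizable and $B_\epsilon=p(A_\epsilon)$, apply the diagonalizable case to get $\BL_{A_\epsilon,B_\epsilon}\geq 0$, and let $\epsilon\to 0$. But that requires knowing $\cL_{A_\epsilon,B_\epsilon}$ is \emph{positive}, and positivity of $\cL_{A,B}$ is a closed condition in the data, not an open one: it is defined by non-strict inequalities, so positivity of the limit map gives no control whatsoever over the perturbed maps. (Closedness of positivity and of $\BL\geq 0$, which you cite, only lets you pass good properties \emph{to} the limit, which is not what is needed here.) Regularizing, e.g.\ replacing $B$ by $B+\delta A$ so that $\cL_{A,B}+\delta\,\mathrm{id}$ is ``more positive,'' does not place you in the interior of the cone of positive maps uniformly over rank-one arguments, so it does not restore stability under the perturbation of $A$ either. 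The unspecified ``induction on block sizes'' would have to reproduce, in effect, the (C1) property of upper-triangular Toeplitz algebras and the surjectivity of the associated bilinear map, which is precisely the missing idea.
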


This result will be proved in Section \ref{S:HillPick} using Theorem \ref{T:Main2} below, which provides a more general class of $*$-linear matrix maps for which positivity and complete positivity coincide, continuing a line of research initiated in  \cite{tHvdM21c}.

We now describe our main result in the setting of general $*$-linear matrix maps
\begin{equation}\label{cL-Intro}
\cL:\BF^{q\times q} \to \BF^{n\times n}.
\end{equation}
With such a map $\cL$ we associate two matrices, the Choi matrix $\BL$ given by
\begin{equation}\label{Choi}
\BL=\left[\BL_{ij}\right] \in \BF^{nq \times nq},\ \  \BL_{ij}=\cL\left(\mathcal{E}_{ij}^{(q)}\right) \in\BF^{n \times n}\ \ \mbox{for $i,j=1,\ldots,q,$}\end{equation}
where $\mathcal{E}_{ij}^{(q)}$ is the standard basis element in $\BF^{q \times q}$ with a 1 in position $(i,j)$ and zeros elsewhere, and what we call the matricization of $\cL$, which is the matrix $L\in \BF^{n^2 \times q^2}$ determined by the linear map
\begin{equation}\label{Matricization}
L:\BF^{q^2} \to \BF^{n^2},\quad L\,\left(\vect_{q}(V)\right)= \vect_{n}\left(\cL (V)\right),\quad V\in\BF^{q\times q},\end{equation}
where $\vect_{r \times s}:\BF^{r \times s} \to \BF^{rs}$ is the vectorization operator, abbreviated to $\vect_r$ in case $r=s$.

\begin{theorem}\label{T:Main2}
Let $\cL$ be a $*$-linear matrix map as in \eqref{cL-Intro} with matricization $L$ as in \eqref{Matricization}. If there exists a matrix $C \in \BF^{n \times n}$ such that $L$ is in $\ov{\{C\}_{\BF}''} \otimes \{C\}_{\BF}''$, then positivity and complete positivity of $\cL$ coincide.
\end{theorem}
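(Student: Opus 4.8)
The plan is to make the hypothesis concrete, then to show that both ``positive'' and ``completely positive'' are equivalent to positivity of one and the same Hermitian form on the commutative algebra $\cA:=\{C\}_\BF''$, and finally that the two sets on which this form gets tested actually coincide. First, since $\ov{\{C\}_\BF''}\otimes\{C\}_\BF''\subseteq\BF^{n^2\times n^2}$, the hypothesis forces $q=n$; and since the bicommutant of a single matrix is the algebra of polynomials in it, $\cA:=\{C\}_\BF''=\BF[C]$ is commutative and $\ov{\{C\}_\BF''}=\{\ov C\}_\BF''=\BF[\ov C]$. Writing $L=\sum_{k=1}^m\ov{p_k(C)}\otimes q_k(C)$ for suitable polynomials $p_k,q_k$ and invoking the vectorization identity $(\ov{p_k(C)}\otimes q_k(C))\vect_n(V)=\vect_n(q_k(C)\,V\,p_k(C)^*)$, we see that $\cL$ has the explicit form
\[
\cL(V)=\sum_{k=1}^m q_k(C)\,V\,p_k(C)^*,\qquad\text{with }p_k(C),\,q_k(C)\in\cA .
\]
Complete positivity always implies positivity, so only the converse requires proof.

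Next I would introduce the Hermitian form $F(T)=\sum_{k=1}^m\langle q_k(C),T\rangle\,\ov{\langle p_k(C),T\rangle}$ on $\BF^{n\times n}$, with $\langle X,Y\rangle=\trace(Y^*X)$ the Hilbert--Schmidt inner product, and characterize both properties through it. On the one hand, $\cL$ is positive iff $\cL(xx^*)\in\ov{\cP}_n$ for every $x\in\BF^n$ (these generate the extreme rays of $\ov{\cP}_n$), and since $y^*\cL(xx^*)y=\sum_k(y^*q_k(C)x)\,\ov{(y^*p_k(C)x)}=F(yx^*)$, positivity of $\cL$ is equivalent to $F(yx^*)\ge0$ for all $x,y\in\BF^n$. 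On the other hand, by the Choi matrix criterion $\cL$ is completely positive iff its Choi matrix $\BL$ is positive semidefinite; using that the Choi matrix of $V\mapsto AVB$ is $\vect_n(A)\vect_n(B^*)^*$ (a routine computation, modulo the vectorization convention), one gets $\BL=\sum_k\vect_n(q_k(C))\vect_n(p_k(C))^*$, hence $\vect_n(Z)^*\,\BL\,\vect_n(Z)=F(Z)$ for all $Z\in\BF^{n\times n}$, so complete positivity of $\cL$ is equivalent to $F(Z)\ge0$ for all $Z\in\BF^{n\times n}$. Finally, because each $q_k(C),p_k(C)$ lies in $\cA$, the value $F(T)$ depends only on the orthogonal projection $P_\cA(T)$ of $T$ onto $\cA$, and $P_\cA$ maps $\BF^{n\times n}$ onto $\cA$. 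Thus $\cL$ is completely positive iff $F\ge0$ on all of $\cA$, while $\cL$ is positive iff $F\ge0$ on the subset $\cA_1:=\{P_\cA(yx^*):x,y\in\BF^n\}$ of $\cA$.

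It remains to prove $\cA_1=\cA$. I would pick $x_0\in\BF^n$ whose minimal polynomial relative to $C$ equals the minimal polynomial of $C$ (such a vector exists, e.g.\ by decomposing $\BF^n$ into $C$-cyclic subspaces and adding up their generators). Then $p(C)x_0=0$ implies $p(C)=0$, so $a\mapsto ax_0$ is injective on $\cA$ and $\dim(\cA x_0)=\dim\cA$; equivalently the linear map $y\mapsto P_\cA(yx_0^*)$ has kernel $(\cA x_0)^\perp$ of dimension $n-\dim\cA$, hence range of dimension $\dim\cA$, hence range equal to $\cA$. Therefore $\cA=\{P_\cA(yx_0^*):y\in\BF^n\}\subseteq\cA_1\subseteq\cA$, so that ``$F\ge0$ on $\cA_1$'' and ``$F\ge0$ on $\cA$'' say the same thing, and positivity of $\cL$ is equivalent to complete positivity of $\cL$. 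The conceptual core of the argument --- and the part where I would be most careful --- is the uniform reduction of both conditions to the single form $F$, in particular keeping the vectorization and Choi-matrix conventions consistent; the cyclic-vector step that finishes it is short once that reduction is in place.
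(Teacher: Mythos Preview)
Your proof is correct and takes a genuinely different, more direct route than the paper. The paper builds a general framework: it introduces properties (C1) and (C2) of subspaces $\cW\subset\BF^{n\times q}$ (roughly: for any independent $X_1,\ldots,X_k\in\cW$ there is a vector $v$ making $X_1v,\ldots,X_kv$ independent), shows via the minimal Hill representation that if $L\in\ov{\cV}\otimes\cV$ for some $\cV$ satisfying (C1) or (C2) then positivity and complete positivity coincide (this is their Theorem~\ref{T:Lin}, going through full-row-rank of $\whatA(z\otimes I_n)$), and finally proves that $\{C\}_\BF''$ satisfies (C1) and (C2) by a multi-step reduction through the Jordan form of $C$ to (block) triangular Toeplitz algebras and their direct sums (Lemmas~\ref{alg lower triangular toeplitz}--\ref{alg lower triangular toeplitz real}, Theorems~\ref{main result for alg for C}--\ref{main result for alg for R}).

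You bypass both the Hill machinery and the Jordan decomposition entirely: you write $\cL(V)=\sum_k q_k(C)Vp_k(C)^*$ concretely, encode both positivity and complete positivity as nonnegativity of the single quadratic form $F(T)=\vect(T)^*\BL\,\vect(T)$ on the sets $\{P_\cA(yx^*)\}$ and $\cA$ respectively, and close the gap with a cyclic vector. That last step is in fact a one-line proof that $\cA=\{C\}_\BF''$ satisfies the paper's property (C1): choosing $v=x_0$ with annihilator equal to the minimal polynomial makes $a\mapsto ax_0$ injective on $\cA$, which is precisely (C1) for this subspace and is considerably shorter than the paper's Jordan-form argument. What the paper's approach buys is modularity --- the (C1)/(C2) framework is formulated for arbitrary subspaces $\cV$ (they leave characterizing such $\cV$ as an open question) and feeds directly into the Hill-representation apparatus used throughout; what yours buys is economy for this particular statement. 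The one place to be careful, as you flag, is the convention-sensitive identity for the Choi matrix of $V\mapsto AVB$; with the paper's conventions $\BL=\sum_{i,j}\cE_{ij}\otimes\cL(\cE_{ij})$ and $\vect(A)=\sum_i e_i\otimes Ae_i$ one indeed gets $\BL=\sum_k\vect(q_k(C))\vect(p_k(C))^*$, and $\BL\in\cH_{n^2}$ (from $*$-linearity of $\cL$, Theorem~\ref{T:*-linear}) guarantees $F$ is real-valued.
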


Here $\ov{\{C\}_{\BF}''}$ is the algebra obtained by taking the (entrywise) complex conjugates of the elements of $\{C\}_{\BF}''$. We prove Theorem \ref{T:Main2}  in Subsection \ref{SubS:bicomm} below.

The condition that the matricization $L$ be contained in the algebra $\ov{\{C\}_{\BF}''} \otimes \{C\}_{\BF}''$ may seem restrictive, however, we point out that, by Theorem 5.1 in \cite{tHvdM21b}, the fact that $\cL$ is $*$-linear implies that there always exists a subspace $\cW\subset \BF^{n \times q}$ such that $L\in \ov{\cW} \otimes \cW$, for instance by taking $\cW$ equal to the span of the $n\times q$ blocks of $L$; see Corollary \ref{C:Lincl} below.

The proof of Theorem \ref{T:Main2} is based on a type of representation of linear matrix maps studied by R.D. Hill in \cite{H69,H73}, which we will refer to as Hill representations. These representations have the form
\begin{equation}\label{HillRepIntro}
\cL(V)=\sum_{k,l=1}^m \BH_{kl}\, A_k V A_l^*,\quad V\in\BF^{q \times q},
\end{equation}
for matrices $A_1,\ldots,A_m \in\BF^{n \times q}$. The matrix $\BH=\left[\BH_{kl}\right]_{k,l=1}^m\in\BF^{m\times m}$ is called the Hill matrix associated with the representation \eqref{HillRepIntro}. Moreover, we call the Hill representation \eqref{HillRepIntro} of $\cL$ minimal if $m$ is the smallest number of matrices $A_k$ that occurs in Hill representations for $\cL$, and it turns out that this smallest number equals the rank of the Choi matrix $\BL$ of $\cL$. A detailed analysis of minimal Hill representations was conducted in \cite{tHvdM21b} and a review of the relevant results from \cite{tHvdM21b} will be given in Section \ref{S:Hill}, along with some results on non-minimal Hill representations of $\cL$, which were not covered in \cite{tHvdM21b}.

If $\cL$ is given by the minimal Hill representation \eqref{HillRepIntro}, then the Choi matrix factors as
\begin{equation}\label{ChoiFact}
\BL=\whatA^* \BH^T \whatA\quad \mbox{with } \widehat{A}^*:=\begin{bmatrix} \vect_{n \times q}\left(A_1\right) & \hdots & \vect_{n \times q}\left(A_m\right)  \end{bmatrix}\in \BF^{nq \times m}
\end{equation}
and $\whatA$ is a matrix with full row rank. As proved by Poluikis and Hill in \cite{PH81}, it follows that complete positivity of $\cL$ corresponds to positive definiteness of $\BH$. Positivity of $\cL$ corresponds to\begin{equation}\label{PosCon}\left(z\otimes x\right)^* \BL \left(z\otimes x\right) \ge 0 \quad \mbox{for all}\quad x\in\BF^n \text{ and } z \in \BF^q,\end{equation} see \cite{KMcCSZ19}. In particular, by the above factorisation of $\BL$, it follows that a positive map $\cL$ is completely positive when the bilinear map determined by the matrix $\whatA$:
\begin{equation}\label{Bilinear}
(z,x)\mapsto \whatA (z\otimes x),\quad x\in\BF^n,\, z \in \BF^q
\end{equation}
is surjective. 
Little appears to be known about the ranges of bilinear maps. In \cite{tHvdM21c} we considered special cases for which \eqref{Bilinear} is surjective, although we also encountered a case where the bilinear map \eqref{Bilinear} was not surjective, while positivity and complete positivity of $\cL$ still coincided. An instinctive approach we use in this paper is to find a vector $x \in \BF^n$ such that $\rank \widehat{A}\left(I_q \otimes x\right)=m,$ or a vector $z \in \BF^q$ such that $\rank \widehat{A}\left(z \otimes I_n\right)=m$, since for both cases it follows that \eqref{Bilinear} is surjective; with $I_p$ the identity matrix of size $p \times p$. We provide a criteria for when this happens, which includes the case of Theorem \ref{T:Main2}.

The Choi matrix $\BL$ of $\cL$ is of size $nq \times nq$, but its rank $m$ may be much smaller. Hence to test for complete positivity, it may be more convenient to test whether the Hill matrix $\BH$ is positive definite than to test positive semidefiniteness of $\BL$. Moreover, as shown in examples in \cite{tHvdM21b} and exploited further in \cite{tHvdM21c}, by making an appropriate selection of the Hill representation, in the factorization of the Choi matrix \eqref{ChoiFact}, typically `structural properties' of $\cL$ (more correctly, of $L$) are contained in the matrix $\whatA$ while the `data' of $\cL$ is stored in $\BH$. To some extend, this phenomenon also occurs in non-minimal Hill representations, provided the choice is made in the right way; see Subsection \ref{SubS:NonMinHill}.

In the context of the $*$-linear map $\cL_{A,B}$, with $A$ and $B$ as in Theorem \ref{main thm}, it turns out that the rank $m$ of the Choi matrix is at most $n$, so that the Hill matrix $\BH_{A,B}$, called the Hill-Pick matrix for this specific case, has a size of at most $n \times n$. We compute this matrix explicitly, as well as the matricization $L_{A,B}$ determined by $\cL_{A,B}$, in Section \ref{S:HillPick}. For the case where $A$ and $B$ are diagonal, $\BH_{A,B}$ coincides with the Pick matrix from \cite{YS67}, hence our choice of the name Hill-Pick matrix.

Together with the current introduction, the paper consists of five sections. We provide some linear algebra preliminaries in Section \ref{LAP}, focusing on the  bicommutant of a matrix based on its Jordan decomposition, for both $\BF=\BC$ and $\BF=\BR$. In Section \ref{S:Hill} we recall some results on $*$-linear maps and minimal Hill representations from \cite{tHvdM21b} that will be used throughout the paper. We also prove some results on non-minimal Hill representations in this section. Then we present some new results on classes of $*$-linear matrix maps for which positivity and complete positivity coincide in Section \ref{pos imply completely pos}, extending our research from \cite{tHvdM21c}. In particular, in Section \ref{pos imply completely pos} we prove Theorem \ref{T:Main2}. The proof of our main result on the Lyapunov order, Theorem \ref{main thm}, will be given in Section \ref{S:HillPick} along with explicit formulas for the Hill-Pick matrix $\BH_{A,B}$ for the case where $\BF=\BC$.

\section{Linear algebra preliminaries}\label{LAP}

In this section we present some notation and preliminary results from linear algebra that can mostly be found in the standard literature \cite{HJ85,HJ91,W16}. For some more specialized results we do provide precise references or a proof in case we did not find an appropriate source. There is some overlap with notation introduced in the introduction, but we feel it may be helpful to the reader to have everything in one place.

\subsection{Notation and basic linear algebra results}

Throughout this paper $\BF=\BC$ or $\BF=\BR$. We write $\BF^{n \times m}$ for the vector space of $n \times m$ matrices over $\BF$ and $\BF^n$ for the space of all (column) vectors over $\BF$ of length $n$. Occasionally we will identify $\BF^n$ with $\BF^{n \times 1}$, so that matrix operations can be applied to vectors in $\BF^n$. Furthermore we write $\textup{GL}(n,\BF)$ for the set of invertible matrices in $\BF^{n \times n}$.

The standard $j$-th basis element in $\BF^n$ is denoted by $e_j^{(n)}$ or simply $e_j$ when the length is clear from the context. We write $\cE_{ij}^{(n,m)}$ for the standard basis element of $\BF^{n \times m}$ with $1$ in position $(i,j)$ and zeros elsewhere, i.e., $\cE_{ij}^{(n,m)}=e_i^{(n)}e_j^{(m)T}$, abbreviated to $\cE_{ij}^{(n)}$ when $m=n$. With $\OneVec_n$ we indicate the all-one vector of length $n$ and with $\BBone_{n \times m}$ the all-one matrix of size $n \times m$, so that $\BBone_{n \times m}=\OneVec_{n}\OneVec_m^T$. Also here, we write $\BBone_n$ for $\BBone_{n \times n}$. Furthermore, $I_n$ denotes the $n \times n$ identity matrix.

For $A \in \BF^{n \times m}$ we write $A^T$ for its transpose, $A^*$ for its adjoint and $\overline{A}$ for its complex conjugate. Of course, if $\BF=\BR$, then $A^T=A^*$ and $\ov{A}=A$. The null space of $A$ is denoted $\kr A$. Whenever $\Lambda \subset \BF^{n \times m}$ is a subset of matrices, then by $\ov{\Lambda}$, $\Lambda^T$, $\Lambda^*$, etc.\ we indicate the sets of matrices obtained by applying the appropriate operation to the matrices in $\Lambda$.

We write $\cH_n$ for the real subspace of Hermitian matrices in $\BF^{n \times n}$, which coincides with the $n \times n$ symmetric matrices $\cS_n$ in case $\BF=\BR$, but not if $\BF=\BC$.  With $A\geq 0$ (resp.\ $A>0$) we indicate that $A$ is positive semidefinite (resp.\ positive definite), and $\ov{\cP}_n$ (resp.\ $\cP_n$) denotes the set of positive semidefinite (resp. positive definite) matrices in $\cH_n$.

The {\em Kronecker product} of matrices $A=\left[a_{ij}\right]\in\BF^{n \times m}$ and $B\in\BF^{k \times l}$ is defined as
\[
A\otimes B=\left[a_{ij}B\right]\in \BF^{(n k) \times (m l)}.
\]
Note that for matrices $A$, $B$, $C$ and $D$ of appropriate size we have
\begin{equation} \label{kron prop}
\left(A \otimes B\right)\left(C \otimes D \right)=(AC) \otimes (BD).
\end{equation}
The \emph{vectorization} of a matrix $T \in \BF^{n \times m}$ is the vector $\vect_{n \times m }{(T)} \in \BF^{nm}$ defined as \[\vect_{n \times m}(T)= \sum_{j=1}^m\left(e_j^{(m)} \otimes I_n\right)Te_j^{(m)}.\]
Note that the vectorization operator $\vect_{n \times m}$ defines an invertible linear map from $\BF^{n\times m}$ onto $\BF^{nm}$ whose inverse is indicated by $\vect_{n \times m}^{-1}$. If $m=n$ we just write $\vect_n$ and $\vect_n^{-1}$ and if the sizes are clear from the context, the indices are often left out.  Furthermore, we have the identity
\[
\vect\left(A XB^T\right)=(B \otimes A)\vect(X)
\]
which, when $A$ and $B$ are taken to be (transposes of) vectors yields
\begin{equation}\label{VectForm1}
(z \otimes x)^T \vect_{n \times m}(W)=x^T W z, \quad W \in \BF^{n \times m}, x \in \BF^n, z \in \BF^m.
\end{equation}

\noindent Moreover note that
\[
\vect_{n \times m}\left(v w^T\right)=w \otimes v,\quad v\in\BF^n,w\in\BF^m,
\]
from which we obtain that
\begin{equation}\label{UnitVectId1}
\vect_{m \times n}\left(\mathcal{E}_{lk}^{(m,n)}\right)=\vect_{m \times n}\left(e_l^{(m)}e_k^{(n)^T}\right)= e_k^{(n)} \otimes e_l^{(m)} =e_{(k-1)m +l}^{(nm)}.
\end{equation}
Next, recall that the {\em Hadamard product} of matrices $A=\left[a_{ij}\right], B=\left[b_{ij}\right]\in\BF^{n \times m}$ is defined as
\[A\circ B=\left[a_{ij}b_{ij}\right]\in\BF^{n \times m}.
\]
The {\em canonical shuffle} is the matrix defined by
\begin{equation*}
\fS_{mn}:=\sum_{i=1}^m\sum_{j=1}^n \mathcal{E}_{ij}^{(m,n)} \otimes \mathcal{E}_{ji}^{(n,m)} \in \BF^{mn \times mn},
\end{equation*}
which occasionally will be identified with a linear map on $\BF^{mn}=\BF^{nm}$. Note that $\fS_{mn}$ is an invertible map with inverse $\fS_{mn}^{-1}=\fS_{mn}^*=\fS_{nm}$ and that for $u\in \BF^{m}$, $v \in \BF^{n}$ and $A \in \BF^{m \times n}$ we have
\begin{equation}\label{CanShuffleIds}
\fS_{mn}\vect_{n \times m}\left(A^T\right)=\vect_{m \times n}\left(A\right),\quad
\fS_{mn}\left(u \otimes v\right)=v \otimes u.
\end{equation}

We conclude this subsection with a lemma of a general linear algebra nature, that will be of use in the sequel.

\begin{lemma}\label{L:MatTrans}
Let $K_1,\ldots, K_m \in\BF^{n \times q}$ be linearly independent. Define
\begin{equation}\label{MatTrans}
\wtilK=\mat{\ov{K}_1\\\vdots\\ \ov{K}_m}\in\BF^{mn \times q}, \, \, \widecheck{K}=\mat{K_1^*\\\vdots\\ K_m^*}\in\BF^{mq \times n}  \text{ and }
\whatK=\mat{\widehat{K}_1 \\ \vdots \\ \widehat{K}_m}\in\BF^{m \times nq}
\end{equation}
where $\whatK_l:=\vect_{n \times q}\left(\ov{K}_l\right)^{T}$ for $l=1,\ldots,m$. Then $\rank \whatK=m$, hence $\whatK$ has full row rank, and for $x\in\BF^n, z \in \BF^q$ we have\begin{equation}\label{KtilKhatRel}  \whatK \left(z \otimes I_n \right)=\left(I_m\otimes z\right)^T \widecheck{K} \ands \whatK \left(I_q \otimes x \right)=\left(I_m\otimes x\right)^T \wtilK.  \end{equation}
\end{lemma}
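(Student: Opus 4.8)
The statement is really a collection of elementary identities about vectorization and Kronecker products, together with one rank claim, so the plan is to verify each piece directly.

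\emph{Step 1: the rank claim.} I would first show $\rank\whatK = m$. Since $\whatK_l = \vect_{n\times q}(\ov{K}_l)^T$ is the row vector obtained by vectorizing $\ov{K}_l$, the rows of $\whatK$ are exactly $\vect_{n\times q}(\ov{K}_1)^T,\ldots,\vect_{n\times q}(\ov{K}_m)^T$. Because $\vect_{n\times q}$ is a linear isomorphism $\BF^{n\times q}\to\BF^{nq}$ and complex conjugation is an $\BF$-linear bijection on $\BF^{n\times q}$ (trivially when $\BF=\BR$, and an additive/$\BR$-linear bijection when $\BF=\BC$ — here one must be slightly careful, but linear independence over $\BF$ of $K_1,\ldots,K_m$ is equivalent to linear independence over $\BF$ of $\ov{K}_1,\ldots,\ov{K}_m$ since conjugating a dependence relation produces another one), the vectors $\vect_{n\times q}(\ov{K}_l)$ are linearly independent in $\BF^{nq}$. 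Hence the $m$ rows of $\whatK$ are linearly independent, giving $\rank\whatK = m$ and full row rank.

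\emph{Step 2: the two block identities.} For the identities in \eqref{KtilKhatRel} I would argue blockwise: both sides are block matrices with $m$ blocks indexed by $l=1,\ldots,m$, so it suffices to check the $l$-th block. For the first identity, the $l$-th block of $\whatK(z\otimes I_n)$ is $\vect_{n\times q}(\ov{K}_l)^T(z\otimes I_n)$. Using \eqref{VectForm1} in the form $(z\otimes x)^T\vect_{n\times q}(W) = x^T W z$, or more directly the identity $\vect(AXB^T) = (B\otimes A)\vect(X)$ with suitable vector choices, one computes $\vect_{n\times q}(\ov{K}_l)^T(z\otimes I_n)$ and recognizes it as $(z^T\,\cdot\,)$ applied to $\ov{K}_l$ in a way that yields $z^T K_l^*$ — which is precisely the $l$-th block of $(I_m\otimes z)^T\widecheck{K}$, since $\widecheck{K}$ stacks the $K_l^*$. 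Concretely: $\vect_{n\times q}(\ov K_l)^T (z\otimes I_n)$ is a $1\times n$ row; applying \eqref{VectForm1} with $x$ ranging over the standard basis of $\BF^n$ shows its $j$-th entry equals $e_j^T \ov K_l z = (\ov K_l z)_j$... wait, I need the transpose structure to land on $K_l^*$; tracking indices carefully via \eqref{VectForm1} gives the row $z^T \ov K_l^{\,T} = z^T K_l^*$ when $\BF=\BR$ and the analogous conjugate-free statement in general because $\ov{K}_l^{\,T}=K_l^*$ by definition of the adjoint. The second identity is proved the same way, now contracting against $x\in\BF^n$ on the left, using $\vect_{n\times q}(\ov K_l)^T(I_q\otimes x)$ and \eqref{VectForm1} to land on $x^T \ov K_l = (I_m\otimes x)^T$ applied to the $l$-th block $\ov K_l$ of $\wtilK$.

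\emph{Main obstacle.} The calculations are routine; the only genuinely delicate point is bookkeeping the interplay between transpose, adjoint, and conjugate so that the blocks land on $K_l^*$ and $\ov K_l$ exactly as written — in particular keeping straight that $\whatK_l$ is defined via $\ov{K}_l$ (not $K_l$), which is exactly what makes $\widecheck K$ (built from $K_l^*$) and $\wtilK$ (built from $\ov K_l$) appear on the right-hand sides rather than their conjugates. I would handle this by fixing once and for all the identities $\vect_{n\times q}(vw^T)=w\otimes v$ and $(z\otimes x)^T\vect_{n\times q}(W)=x^TWz$ from the preliminaries, expanding $\ov K_l = \sum_{i,j}\ov{(K_l)}_{ij}\,e_i e_j^T$, and computing both sides of each identity entry by entry; the match is then immediate. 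No step requires anything beyond the displayed identities \eqref{kron prop}, \eqref{VectForm1}, \eqref{UnitVectId1}, so Lemma \ref{L:MatTrans} follows.
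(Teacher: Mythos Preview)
Your approach is correct and essentially the same as the paper's: both argue blockwise and reduce the first identity to the scalar computation $\vect_{n\times q}(\ov{K}_l)^T(z\otimes e_j)=e_j^T\ov{K}_l z$, giving the row $z^T\ov{K}_l^{\,T}=z^TK_l^*$, via \eqref{VectForm1}. The only difference is that the paper proves the first identity explicitly and then defers the second identity and the rank claim to Lemma~5.5 of \cite{tHvdM21b}, whereas you give a self-contained argument for all three parts; your direct arguments for the rank claim (linear independence preserved under conjugation and the linear isomorphism $\vect_{n\times q}$) and the second identity (the analogous computation with $e_j\otimes x$) are exactly what that external lemma contains.
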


\begin{proof}[\bf Proof]
Using \eqref{VectForm1}, we obtain the first identity from
\begin{align*}
\widehat{K}\left(z \otimes I_n\right)&=\mat{\vect_{n\times q}\left(K_1\right)^*\left(z \otimes I_n\right)\\\vdots \\ \vect_{n \times q}\left(K_m\right)^*\left(z \otimes I_n\right)}=\begin{bmatrix}z^TK_1^* \\ \vdots \\ z^TK_m^* \end{bmatrix}=\left(I_m \otimes z\right)^T\widecheck{K}.
\end{align*}
The second identity as well as the remaining claims follow from Lemma 5.5 in \cite{tHvdM21b} (where in the definition of $\wtilK$ the bars are not included), noting that the context in which they were proved there is not relevant to the argument.
\end{proof}

\subsection{Real and complex Jordan decomposition} The (complex) Jordan form of a matrix $A\in\BC^{n\times n}$ with (distinct) eigenvalues $\la_1,\ldots,\la_r$ is the decomposition of $A$ of the form
\begin{equation}\label{JordanComp}
A=P\, J_A\, P^{-1},\quad \mbox{with }J_A=\diag\left(J_{\un{n}_1}\left(\la_1\right),\ldots,J_{\un{n}_r}\left(\la_r\right)\right)
\end{equation}
where $P\in\textup{GL}(n,\BC)$ and $\un{n}_j=\left(n_{j,1},\ldots,n_{j,k_j}\right)\in\BN^{k_j}$, ordered decreasingly, for some $k_j\in\BN$ and for $j=1,\ldots,r$, and where we set
\[
J_{\un{n}_j}\left(\la_j\right)=\diag\left(J_{n_{j,1}}\left(\la_j\right), \ldots,J_{n_{j,k_j}}\left(\la_j\right)\right),
\]
with $J_{n}(\la)$ denoting the $n \times n$ Jordan matrix with eigenvalue $\la$:
\begin{equation}\label{Jordan}
J_{n}(\la)= \la I_n+ S_n,\quad \mbox{where}\quad S_n=[\delta_{i+1,j}]_{i,j=1}^n,
\end{equation}
with $\delta_{i,j}$ the Kronecker delta symbol. Note that $S_n$ is the upper shift matrix. In this form the matrix $J_A$ is uniquely determined by $A$. In case $A$ is a real matrix, non-real eigenvalues come in complex conjugate pairs with corresponding Jordan structure, i.e., when $J_{\un{n}}(\la)$ occurs in $J_A$ with $\la \in \BC$ non-real, then also $J_{\un{n}}\left(\ov{\la}\right)$ occurs in $J_A$.

In order to introduce the real Jordan form of a real matrix $A$, let $\fC\subset \BR^{2\times 2}$ denote the $2\times 2$ matrix representation of the complex numbers, that is, $\fC$ is the commutative subalgebra of $\BR^{2\times 2}$  given by
\[
\fC=\left\{C_\la:=\sbm{a&b\\-b&a}\in\BR^{2\times 2} \colon \la=a+ib\in\BC \right\}.
\]
With $C\in\fC$ and $n\in\BN$ we associate the real Jordan matrix
\begin{equation}\label{RealJordan}
J_n(C)=I_n\otimes C +S_{n} \otimes I_2,
\end{equation}
and for a tuple $\un{n}=\left(n_1,\ldots,n_k\right)\in\BN^k$ we set
\[
J_{\un{n}}(C)=\diag\left(J_{n_1}(C),\ldots, J_{n_k}(C)\right).
\]

Given a real matrix $A\in\BR^{n\times n}$ with (distinct) complex eigenvalues $\la_1,\ldots,\la_{r_1}$ and (distinct) real eigenvalues $t_1,\ldots,t_{r_2}$, in addition to its complex Jordan form described above, $A$ also has a real Jordan form which is the decomposition of $A$ of the form $A=P\,J_A\,  P^{-1},$ where
\begin{equation}\label{JordanReal}
 J_A=\diag\left(J_{\un{m}_1}\left(C_{\la_1}\right),\ldots,J_{\un{m}_{r_1}}\left(C_{\la_{r_1}}\right),J_{\un{n}_1}\left(t_1\right),\ldots , J_{\un{n}_{r_2}}\left(t_{r_2}\right)\right)
\end{equation}
with $P\in\textup{GL}(n,\BR)$ and $\un{n}_j=\left(n_{j,1},\ldots,n_{j,k_j}\right)\in\BN^{k_j}$, $\un{m}_s=\left(m_{s,1},\ldots,m_{s,l_s}\right)\in \BN^{l_s}$, both ordered decreasingly, for some $k_j,l_s\in\BN$ and for $s=1,\ldots,r_1$ and $j=1,\ldots,r_2$.

\subsection{Block Toeplitz matrices}
Let $\fA\subset \BF^{m \times m}$ be a matrix algebra. We write $\fT_{n,\fA}$ for the class of $n\times n$ block Toeplitz matrices with blocks from $\fA$, that is, the matrices of the form
\[
T=\sum_{j=1}^{n-1} S_n^{*j}\otimes A_{-j} + \sum_{j=0}^n S_n^j\otimes A_j\quad \mbox{with}\quad A_{-n+1},\ldots,A_{n}\in\fA,
\]
with $S_n$ the upper shift matrix defined in \eqref{Jordan}. Furthermore, $\fT_{n,\fA}^+$ denotes the matrix algebra of upper triangular block Toeplitz matrices in $\fT_{n,\fA}$, i.e., with $A_{-n+1}=\cdots=A_{-1}=0$, and $\fT^{-}_{n,\fA}$ the matrix algebra of lower triangular block Toeplitz matrices in $\fT_{n,\fA}$, i.e., $A_1=\cdots=A_n=0$. We will encounter block Toeplitz matrices with various choices of $\fA$; apart from $\BC$ and $\BR$ there will also be instances where $\fA$ is equal to $\fC$ or $\fT_{n,\BC}^+$, $\fT_{n,\BR}^+$ or $\fT_{n,\fC}^+$. Note that when $\fA$ is a commutative algebra, then so are $\fT_{n,\fA}^+$ and $\fT_{n,\fA}^-$.

Note that the Jordan matrix $J_n(\la)$ in \eqref{Jordan} is in $\fT_{n,\BC}^+$, while the real Jordan matrix $J_n(C)$ in \eqref{RealJordan} is in $\fT_{n,\fC}^+$. To accommodate the matrices $J_A$ appearing in the real and complex Jordan decompositions, we also introduce a class of repeated, compressed block Toeplitz matrices. For $\un{n}=\left(n_1,\ldots,n_k\right)\in\BN^k$ we write $\fT_{\un{n},\fA}$ for the classes of block diagonal matrices of the form $T=\diag\left(T_1,\ldots,T_k\right)$ with $T_j\in\fT_{n_j,\fA}$ and for $1\leq j,s\leq k$ with $n_j\geq n_s$ we have that $T_s$ is the compression of $T_j$ to the first $n_s$ (block) columns and first $n_s$ (block) rows. Similarly we define $\fT_{\un{n},\fA}^+$ by restricting the block diagonal entries to be upper triangular block Toeplitz matrices and $\fT^{-}_{\un{n},\fA}$ is defined by restricting the block diagonal entries to be lower triangular block Toeplitz matrices. Note that $\fT_{\un{n},\fA}^+$ and $\fT^{-}_{\un{n},\fA}$ are also algebras, which are commutative whenever $\fA$ is commutative.

The following lemma will be of use in the next subsection.

\begin{lemma} \label{result on tensors and algebras}
For $\un{n}\in\BN^k$ and $\fA_1,\fA_2$ matrix algebras over $\BF$, we have
\[
\fT^+_{\un{n},\fA_1}\otimes\fA_2=\fT^+_{\un{n},\fA_1\otimes \fA_2}.
\]
\end{lemma}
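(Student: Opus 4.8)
The plan is to unravel both sides of the claimed identity
\[
\fT^+_{\un{n},\fA_1}\otimes\fA_2=\fT^+_{\un{n},\fA_1\otimes \fA_2}
\]
block by block, reducing everything to the single-index case $\un n=(n)$ and then handling the repeated/compressed block-diagonal structure as a purely bookkeeping step. First I would treat the case $\un n=(n)$, i.e. show $\fT^+_{n,\fA_1}\otimes \fA_2=\fT^+_{n,\fA_1\otimes\fA_2}$. Recall an element of $\fT^+_{n,\fA_1}$ is $T=\sum_{j=0}^{n-1} S_n^j\otimes A_j$ with $A_j\in\fA_1$ (I only need $j$ up to $n-1$ since $S_n^n=0$). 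Tensoring on the right by $B\in\fA_2$ gives, using the mixed-product rule \eqref{kron prop},
\[
T\otimes B=\sum_{j=0}^{n-1}\bigl(S_n^j\otimes A_j\bigr)\otimes B=\sum_{j=0}^{n-1} S_n^j\otimes \bigl(A_j\otimes B\bigr),
\]
after reassociating the Kronecker product (which is associative) and recognising the reordering $S_n^j\otimes A_j\otimes B$ as sitting in $\fT^+_{n,\fA_1\otimes\fA_2}$ because $A_j\otimes B\in\fA_1\otimes\fA_2$. Taking linear spans of such elementary tensors shows $\fT^+_{n,\fA_1}\otimes\fA_2\subseteq\fT^+_{n,\fA_1\otimes\fA_2}$. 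For the reverse inclusion, a general element of $\fT^+_{n,\fA_1\otimes\fA_2}$ is $\sum_{j=0}^{n-1} S_n^j\otimes C_j$ with each $C_j\in\fA_1\otimes\fA_2$, hence $C_j=\sum_i A_{j,i}\otimes B_{j,i}$ with $A_{j,i}\in\fA_1$, $B_{j,i}\in\fA_2$; substituting and again reassociating, $\sum_j S_n^j\otimes C_j=\sum_{j,i}(S_n^j\otimes A_{j,i})\otimes B_{j,i}$, which is a sum of elementary tensors from $\fT^+_{n,\fA_1}\otimes\fA_2$. That settles $\un n=(n)$.

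For general $\un n=(n_1,\dots,n_k)$, I would argue that $\fT^+_{\un n,\fA}$ is, up to the compression constraints, a subalgebra of $\fT^+_{n_1,\fA}\oplus\cdots\oplus\fT^+_{n_k,\fA}$, and that both the compression relations and the block-diagonal structure are preserved under tensoring by $\fA_2$ on the right. Concretely: if $T=\diag(T_1,\dots,T_k)$ with $T_s\in\fT^+_{n_s,\fA_1}$ and $T_s$ the compression of $T_j$ to the first $n_s$ block rows and columns whenever $n_j\ge n_s$, then $T\otimes B=\diag(T_1\otimes B,\dots,T_k\otimes B)$, each $T_s\otimes B\in\fT^+_{n_s,\fA_1\otimes\fA_2}$ by the single-index case, and — crucially — the compression relation is inherited because the defining coefficient tuples of $T_s\otimes B$ are just $(A_{s,0}\otimes B,\dots)$ where $(A_{s,0},\dots)$ are those of $T_s$, and $T_s$ being the truncation of $T_j$ means exactly that $A_{s,\ell}=A_{j,\ell}$ for $\ell<n_s$, whence $A_{s,\ell}\otimes B=A_{j,\ell}\otimes B$ in the required range. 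So $T\otimes B\in\fT^+_{\un n,\fA_1\otimes\fA_2}$, and linear combinations give one inclusion. The reverse inclusion proceeds symmetrically: an element of $\fT^+_{\un n,\fA_1\otimes\fA_2}$ has coefficient tuples in $\fA_1\otimes\fA_2$ satisfying the compressions; decompose each coefficient as a finite sum of elementary tensors, and observe that one may make these decompositions compatibly across the compression relations (or simply decompose the coefficients of the largest block $T_{j_0}$, $n_{j_0}=\max_s n_s$, and read off the others by truncation), producing a finite sum of elementary tensors $T^{(i)}\otimes B^{(i)}$ with $T^{(i)}\in\fT^+_{\un n,\fA_1}$.

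The main obstacle — really the only subtle point — is the compression compatibility in the reverse inclusion: one must ensure that when a coefficient $C_\ell\in\fA_1\otimes\fA_2$ is written as $\sum_i A_{\ell,i}\otimes B_{\ell,i}$, these decompositions can be chosen uniformly in $\ell$ so that the resulting $\fT^+_{\un n,\fA_1}$-elements genuinely satisfy the compression relations. The clean way around this is to index by the tensor-factor basis: fix a basis $\{B^{(1)},\dots,B^{(p)}\}$ of $\fA_2$ (finite-dimensional), write every coefficient $C_\ell=\sum_{i=1}^p A_{\ell,i}\otimes B^{(i)}$ with the $A_{\ell,i}\in\fA_1$ now \emph{uniquely} determined and hence automatically respecting any linear relation among the $C_\ell$'s — in particular the equalities forced by compression. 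Then $\sum_\ell (S\text{-structure})\otimes C_\ell=\sum_{i=1}^p\bigl(\text{the }\fT^+_{\un n,\fA_1}\text{ matrix with coefficients }A_{\ell,i}\bigr)\otimes B^{(i)}$, exhibiting membership in $\fT^+_{\un n,\fA_1}\otimes\fA_2$. With that observation the remaining steps are the routine reassociation/bilinearity manipulations sketched above, and I would present the $\un n=(n)$ case in full and indicate the block-diagonal extension briefly.
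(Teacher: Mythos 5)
Your proof is correct, but it takes a different route from the paper. The paper disposes of the lemma in two lines: it first notes the factorization $\fT^+_{\un{n},\fA_1}=\fT^+_{\un{n},\BF}\otimes \fA_1$ (the scalar Toeplitz-with-compressions structure split off from the coefficient algebra) and then applies associativity of the tensor product, $\left(\fT^+_{\un{n},\BF}\otimes \fA_1\right)\otimes \fA_2= \fT^+_{\un{n},\BF} \otimes \left(\fA_1\otimes \fA_2\right)=\fT^+_{\un{n},\fA_1\otimes\fA_2}$. In that formulation the issue you single out as the only subtle point --- choosing tensor decompositions of the coefficients compatibly across the compression relations --- never surfaces, because the compression constraints live entirely in the scalar factor $\fT^+_{\un{n},\BF}$ and are untouched by tensoring with $\fA_1\otimes\fA_2$; of course, the factorization $\fT^+_{\un{n},\fA}=\fT^+_{\un{n},\BF}\otimes\fA$ and the associativity of the span-of-Kronecker-products operation are themselves verified by exactly the kind of coefficient bookkeeping you carry out, so the two arguments have the same mathematical content. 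What your approach buys is a self-contained, element-level verification (with the clean fix of expanding against a fixed basis of $\fA_2$ so the $\fA_1$-components are unique and automatically respect the compression equalities, or equivalently decomposing only the largest block and truncating); what the paper's approach buys is brevity and the reusable structural identity $\fT^+_{\un{n},\fA}=\fT^+_{\un{n},\BF}\otimes\fA$. If you keep your version, the single-index case plus the basis-of-$\fA_2$ observation is the essential part; the rest can be compressed.
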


\begin{proof}[\bf Proof]
Note that $\fT^+_{\un{n},\fA_1}=\fT^+_{\un{n},\BF}\otimes \fA_1$. By the associativity property of tensors we find that
\[
\fT^+_{\un{n},\fA_1}\otimes\fA_2=\left(\fT^+_{\un{n},\BF}\otimes \fA_1\right)\otimes \fA_2= \fT^+_{\un{n},\BF} \otimes \left(\fA_1\otimes \fA_2\right)=\fT^+_{\un{n},\fA_1\otimes\fA_2}.\qedhere
\]
\end{proof}

\subsection{The real and complex bicommutant of a matrix}\label{SubS:bicommA}

For any subset $\cD \subseteq \BF^{n \times n}$, the commutant of $\cD$ in $\BF^{n \times n}$ is the matrix algebra, which is closed under inversion, given by
\[
\mathcal{D}'_{\BF} = \{ C \in \BF^{n \times n}: CD = DC \quad \text{for all} \quad D \in \mathcal{D}\}.
\]
The subscript $\BF$ is added since if $\cD$ consists of real matrices only, one can consider the commutant both in $\BC^{n \times n}$ and $\BR^{n \times n}$. The bicommutant of $\cD$ is the commutant of $\mathcal{D}'_{\BF}$ in $\BF^{n \times n}$  and denotes as $\cD_\BF''$.

If $\cD=\{A\}$ for a matrix $A\in \BF^{n \times n}$, then $\{A\}_{\BF}''\subset \{A\}_\BF'$ which implies $\{A\}_\BF''$ is a commutative algebra which is closed under inversion. The bicommutant $\{A\}_\BF''$ is obtained by applying polynomials to $A$:
\[
\{A\}_\BF''=\left\{p(A)\colon p\in\BF[x]\right\}.
\]
See Chapters 5 and 6 in \cite{C66} for more details.

For the purpose of this paper we are interested in a representation of the elements of $\{A\}_\BC''$ and $\{A\}_\BR''$ in terms of the Jordan forms of $A$. Note that \begin{align*}\{J_n(\la)\}_\BC''&=\fT_{n,\BC}^+ \text{ for all } \la\in\BC,\quad \{J_n(t)\}_\BR''=\fT_{n,\BR}^+ \text{ for all } t\in\BR \quad \text{and} \\ \{J_n(C)\}_\BR''&=\fT_{n,\fC}^+ \text{ for all } C\in\fC.\end{align*}
Furthermore, when a polynomial is applied to a direct sum of Jordan blocks with the same eigenvalue one obtains a direct sum of repeated compressions of the same Toeplitz matrix while for different eigenvalues, polynomials can be selected in such a way that the Toeplitz matrices can be chosen independently. Put together, we find for $A\in\BC^{n\times n}$ given in Jordan form \eqref{JordanComp} that $B\in\BC^{n\times n}$ is in $\{A\}_\BC''$ if and only if
\[
B=P\, \diag\left(T_1,\dots,T_r\right)\, P^{-1},\quad \mbox{with} \quad T_j\in\fT_{\un{n}_j,\BC}^+,\quad j=1,\ldots, r.
\]
Hence, we have
\begin{equation}\label{AbicomC}
\{A\}_\BC''= P\, \diag\left(\fT_{\un{n}_1,\BC}^+,\dots,\fT_{\un{n}_r,\BC}^+\right)\, P^{-1}=P\,\{J_A\}_{\BC}''\,P^{-1}.
\end{equation}
Note that \[\{A^T\}_{\BC}''=\left(P^{-1}\right)^T\, \diag\left(\fT_{\un{n}_1,\BC}^-,\dots,\fT_{\un{n}_r,\BC}^-\right)\,P^T =\left(P^{-1}\right)^T\,\{J_A^T\}_{\BC}''\,P^T\] and \[
\{J_A^*\}_{\BC}''=\{\ov{J_A}^T\}_{\BC}''=\ov{\{J_A^T\}_{\BC}''}=\{J_A^T\}_{\BC}''.
\]
The latter identities do not necessarily hold if $J_A$ is replaced by $A$.

Likewise, for $A\in\BR^{n\times n}$ given in real Jordan form \eqref{JordanReal}, a matrix $B\in\BR^{n\times n}$ is in $\{A\}_\BR''$ if and only if
\[
B=
P\,\diag\left(R_1,\ldots,R_{r_1},T_1,\dots , T_{r_2}\right)\,  P^{-1}, \mbox{ with }
R_s\in\fT_{\un{m}_s,\fC}^+, \quad
T_j\in\fT_{\un{n}_j,\BR}^+,
\] for all $\ s=1,\dots,r_1$ and  $j=1,\ldots, r_2.$
Hence, we have
\begin{equation}\label{AbicomR}
\{A\}_\BR''= P\, \diag\left(\fT_{\un{m}_1,\fC}^+,\dots,\fT_{\un{m}_{r_1},\fC}^+,\fT_{\un{n}_1,\BR}^+,\dots,\fT_{\un{n}_{r_2},\BR}^+\right)\, P^{-1}=P\,\{J_A\}_{\BR}''\,P^{-1}
\end{equation}
and
\begin{align*}
\{A^T\}_\BR''&= \left(P^{-1}\right)^T\, \diag\left(\fT_{\un{m}_1,\fC}^-,\dots,\fT_{\un{m}_{r_1},\fC}^-,\fT_{\un{n}_1,\BR}^-,\dots,\fT_{\un{n}_{r_2},\BR}^-\right)\, P^T\\&=\left(P^{-1}\right)^T\,\{J_A^T\}_{\BR}''\,P^T.
\end{align*}

Lastly, for a given matrix $A \in \BF^{n \times n}$ we describe the structure of $\{A\}_\BC'' \otimes \{A\}_\BC''$ and in case $A$ is a real matrix also the structure of $\{A\}_\BR'' \otimes \{A\}_\BR''$. In both cases the description is based on the Jordan structures of $A$. We start with the complex case.

\begin{proposition}\label{P:SelfTensCdoubleCom}
Let $A\in\BC^{n\times n}$ be given in Jordan form \eqref{JordanComp}. Then
\[
\{A\}_{\BC}'' \otimes \{A\}_\BC''= (P\otimes P)\, \diag\left(\fT^+_{\un{n}_1,\{J_A\}_\BC''},\dots, \fT^+_{\un{n}_r,\{J_A\}_\BC''}\right)\, (P\otimes P)^{-1}.
\]
\end{proposition}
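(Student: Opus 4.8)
The plan is to reduce the statement to the block-diagonal description of $\{J_A\}_\BC''$ and then feed it into Lemma~\ref{result on tensors and algebras}. First I would record two bookkeeping facts about Kronecker products, both immediate from \eqref{kron prop}: for $P\in\textup{GL}(n,\BC)$ one has $(P\otimes P)^{-1}=P^{-1}\otimes P^{-1}$, and for $X,Y\in\BC^{n\times n}$,
\[
(PXP^{-1})\otimes(PYP^{-1})=(P\otimes P)(X\otimes Y)(P\otimes P)^{-1}.
\]
Applying the second identity to every elementary tensor built from the representation $\{A\}_\BC''=P\,\{J_A\}_\BC''\,P^{-1}$ of \eqref{AbicomC}, and using the first to rewrite the right conjugating factor, gives
\[
\{A\}_\BC''\otimes\{A\}_\BC''=(P\otimes P)\left(\{J_A\}_\BC''\otimes\{J_A\}_\BC''\right)(P\otimes P)^{-1}.
\]
So it remains to identify $\{J_A\}_\BC''\otimes\{J_A\}_\BC''$ with $\diag\left(\fT^+_{\un n_1,\{J_A\}_\BC''},\dots,\fT^+_{\un n_r,\{J_A\}_\BC''}\right)$.

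For this, recall from \eqref{AbicomC} that $\{J_A\}_\BC''=\diag\left(\fT^+_{\un n_1,\BC},\dots,\fT^+_{\un n_r,\BC}\right)$, and --- what is crucial here --- that this is a genuine direct product: for each $k$ there is a polynomial $p$ with $p(J_A)$ vanishing on all blocks except the $k$-th and equal on that block to any prescribed element of $\fT^+_{\un n_k,\BC}$ (this is exactly the independence of the Toeplitz blocks across distinct eigenvalues noted before \eqref{AbicomC}). Since a block-diagonal left factor produces a block-diagonal Kronecker product, any $T=\diag(T_1,\dots,T_r)\in\{J_A\}_\BC''$ and $S\in\{J_A\}_\BC''$ satisfy $T\otimes S=\diag(T_1\otimes S,\dots,T_r\otimes S)$; taking spans gives the inclusion $\subseteq$ into the block-diagonal algebra whose $k$-th block is $\fT^+_{\un n_k,\BC}\otimes\{J_A\}_\BC''$. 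For the reverse inclusion, choosing $T$ supported in a single block $k$ shows that $\fT^+_{\un n_k,\BC}\otimes\{J_A\}_\BC''$ is attained in that block, so
\[
\{J_A\}_\BC''\otimes\{J_A\}_\BC''=\diag\left(\fT^+_{\un n_1,\BC}\otimes\{J_A\}_\BC'',\dots,\fT^+_{\un n_r,\BC}\otimes\{J_A\}_\BC''\right).
\]

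Finally I would apply Lemma~\ref{result on tensors and algebras} with $\fA_1=\BC$ (the scalar $1\times1$ algebra) and $\fA_2=\{J_A\}_\BC''$: since $\BC\otimes\{J_A\}_\BC''=\{J_A\}_\BC''$, the lemma gives $\fT^+_{\un n_j,\BC}\otimes\{J_A\}_\BC''=\fT^+_{\un n_j,\{J_A\}_\BC''}$ for each $j$, and substituting this into the last display, then conjugating by $P\otimes P$, completes the proof. The only delicate point --- more a matter of being careful than a real difficulty --- is the reverse inclusion in the block-diagonal identification: it genuinely uses that the Toeplitz blocks of $\{J_A\}_\BC''$ can be chosen independently, so that each summand $\fT^+_{\un n_k,\BC}\otimes\{J_A\}_\BC''$ really lies inside the span of the elementary tensors $T\otimes S$.
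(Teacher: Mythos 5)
Your argument is correct and follows essentially the same route as the paper: conjugate by $P\otimes P$ via \eqref{kron prop}, distribute the tensor product over the block-diagonal form of $\{J_A\}_\BC''$, and finish with Lemma~\ref{result on tensors and algebras} applied with $\fA_1=\BC$, $\fA_2=\{J_A\}_\BC''$. The only difference is that you spell out the distributivity $\diag\left(\fT^+_{\un n_1,\BC},\dots,\fT^+_{\un n_r,\BC}\right)\otimes\{J_A\}_\BC''=\diag\left(\fT^+_{\un n_1,\BC}\otimes\{J_A\}_\BC'',\dots,\fT^+_{\un n_r,\BC}\otimes\{J_A\}_\BC''\right)$, including the reverse inclusion via tensors supported in a single block, a step the paper simply writes down without comment.
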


\begin{proof}[\bf Proof]
Making use of the formula for $\{A\}_{\BC}''$ in \eqref{AbicomC} with \eqref{kron prop} it follows that
\begin{equation*}
    \begin{aligned}
    \{A\}_{\BC}'' \otimes \{A\}_{\BC}'' &=\left(P\, \{J_A\}_{\BC}'' \,P^{-1}\right) \otimes \left(P\, \{J_A\}_{\BC}'' \,P^{-1}\right) \\&=(P \otimes P)\,\left(\{J_A\}_{\BC}'' \otimes \{J_A\}_{\BC}''\right) \,(P\otimes P)^{-1}\\ &= \left( P \otimes P\right)\, \left(\diag\left(\fT_{\un{n}_1,\BC}^+,\dots,\fT_{\un{n}_r,\BC}^+\right)\otimes \{J_A\}_{\BC}''\right)\,\left( P  \otimes P\right)^{-1} \\
&= \left( P \otimes P\right)\, \diag\left(\fT_{\un{n}_1,\BC}^+\otimes \{J_A\}_{\BC}'',\dots \right. \\ &\left. \qquad \qquad \qquad \dots,\fT_{\un{n}_r,\BC}^+\otimes \{J_A\}_{\BC}''\right)\,\left( P  \otimes P\right)^{-1}\\
&= \left( P \otimes P\right)\, \diag\left(\fT_{\un{n}_1,\{J_A\}_{\BC}''}^+,\dots, \fT_{\un{n}_r,\{J_A\}_{\BC}''}^+\right)\,\left( P  \otimes P\right)^{-1},
\end{aligned}
\end{equation*}
where in the last identity we applied Lemma \ref{result on tensors and algebras} with $\fA_1=\BC$ and $\fA_2=\{J_A\}_{\BC}''$.
\end{proof}

\noindent Applying the same arguments as in the proof of Proposition \ref{P:SelfTensCdoubleCom} one obtains 
\begin{align} \label{formula for transpose C}
    \{A^T\}_{\BC}'' \otimes \{A^T\}_{\BC}''&=\left(P^T \otimes P^T\right)^{-1} \,\left(\{J_A^T\}_{\BC}'' \otimes \{J_A^T\}_{\BC}''\right)\,\left( P^T  \otimes P^T\right) \\&= \left(P^T \otimes P^T\right)^{-1}\, \diag\left(\fT_{\un{n}_1, \{J_A^T\}_{\BC}''}^-,\dots,\fT_{\un{n}_r, \{J_A^T\}_{\BC}''}^-\right)\,\left( P^T  \otimes P^T\right)\nonumber
\end{align} as well as
\begin{align*}
\{A\}_{\BR}'' \otimes \{A\}_\BR''&=\left(P \otimes P\right) \,\left(\{J_A\}_{\BR}'' \otimes \{J_A\}_{\BR}''\right)\,\left( P  \otimes P\right)^{-1}\\&= (P\otimes P)\,
\diag\left(\fT_{\un{m}_1,\fC\otimes\{J_A\}_{\BR}''}^+,\dots,\fT_{\un{m}_{r_1},\fC\otimes\{J_A\}_{\BR}''}^+, \right. \\&\qquad\qquad\qquad \qquad \qquad \left. \fT_{\un{n}_1,\{J_A\}_{\BR}''}^+,\dots,\fT_{\un{n}_{r_2},\{J_A\}_{\BR}''}^+\right)\,
(P\otimes P)^{-1} \nonumber
\end{align*}
and
\begin{align} \label{formula for transpose R}
 \{A^T\}_{\BR}'' \otimes \{A^T\}_\BR''&=\left(P^T \otimes P^T\right)^{-1} \,\left(\{J_A^T\}_{\BR}'' \otimes \{J_A^T\}_{\BR}''\right)\,\left( P^T  \otimes P^T\right)\\&= \left(P^T\otimes P^T\right)^{-1}\,
\diag\left(\fT_{\un{m}_1,\fC\otimes\{J_A^T\}_{\BR}''}^-,\dots,\fT_{\un{m}_{r_1},\fC\otimes\{J_A^T\}_{\BR}''}^-, \right. \nonumber \\ &\left. \qquad \qquad \qquad \qquad \qquad \quad
\fT_{\un{n}_1,\{J_A^T\}_{\BR}''}^-,\dots,\fT_{\un{n}_{r_2},\{J_A^T\}_{\BR}''}^-\right)\,
\left(P^T\otimes P^T\right). \nonumber
\end{align}

 \section{$*$-Linear matrix maps and Hill representations}\label{S:Hill}

 In this section we provide a brief review of $*$-linear matrix maps $\cL$ as in \eqref{cL-Intro} and Hill representations for such maps as studied by R.D. Hill in \cite{H73}, see also \cite{PH81}. For proofs and further details we refer to \cite{tHvdM21b}. Only in the last subsection some new results are presented, on non-minimal Hill representations.

\subsection{$*$-Linear matrix maps}
Consider a linear matrix map $\cL$ as in \eqref{cL-Intro}, that is,
\begin{equation}\label{cL}
\cL:\BF^{q\times q} \to \BF^{n\times n}.
\end{equation}
Following \cite{KMcCSZ19}, we say that $\cL$ is $*$-linear if $\cL(V^*)=\cL(V)^*$ for all $V\in\BF^{q\times q}$. Define the matricization $L$ and Choi matrix $\BL$ associated with $\cL$ via \eqref{Matricization} and \eqref{Choi}, respectively. Then $\cL$ can be written as
\[
\cL(V)= \vect_n^{-1}\left(L\vect_q(V)\right)= \sum_{i,j=1}^q v_{ij}\BL_{ij},\quad V=[v_{ij}]\in\BF^{q\times q}.
\]
$*$-Linearity of $\cL$ can be characterized in the following way.

\begin{theorem}\label{T:*-linear}
Let $\cL$ be a linear map as in \eqref{cL} with matricization $L$ and Choi matrix $\BL$. Decompose $L$ as a block matrix $L=\left[L_{ij}\right]$ where $L_{ij}=\left[\ell^{ij}_{kl}\right]\in\BF^{n \times q}$.  Then the following are equivalent:
\begin{itemize}
  \item[(i)] $\cL$ is $*$-linear;
  \item[(ii)] $\BL\in\cH_{nq}$;
  \item[(iii)] $\ell_{kl}^{ij}=\ov{\ell}_{ij}^{kl}, \quad \text{for all} \quad i,k=1, \ldots,n$ and  $j,l=1,\ldots,q.$
\end{itemize}
\end{theorem}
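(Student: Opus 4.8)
The plan is to prove the two equivalences (i)$\Leftrightarrow$(ii) and (ii)$\Leftrightarrow$(iii); both are short translations once the indexing is pinned down. For (i)$\Leftrightarrow$(ii), I would expand $\cL$ via its Choi matrix. For $V=[v_{ij}]\in\BF^{q\times q}$ one has $\cL(V)=\sum_{i,j}v_{ij}\BL_{ij}$, hence $\cL(V)^*=\sum_{i,j}\ov{v}_{ij}\BL_{ij}^*$, while $(V^*)_{ij}=\ov{v}_{ji}$ gives $\cL(V^*)=\sum_{i,j}\ov{v}_{ji}\BL_{ij}=\sum_{i,j}\ov{v}_{ij}\BL_{ji}$ after reindexing. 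Since the scalars $v_{ij}$ are arbitrary, $\cL(V^*)=\cL(V)^*$ for all $V$ is equivalent to $\BL_{ji}=\BL_{ij}^*$ for all $i,j=1,\ldots,q$; read on the level of the $q\times q$ array of $n\times n$ blocks, this family of block identities is exactly $\BL=\BL^*$, i.e. $\BL\in\cH_{nq}$.

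For (ii)$\Leftrightarrow$(iii) the one substantive ingredient is to express the entries $\ell^{ij}_{kl}$ of the matricization $L$ through those of $\BL$. By \eqref{UnitVectId1}, $\vect_q(\cE_{ij}^{(q)})=e_j^{(q)}\otimes e_i^{(q)}=e_{(j-1)q+i}^{(q^2)}$, so \eqref{Matricization} and \eqref{Choi} give that column $(j-1)q+i$ of $L$ equals $\vect_n(\BL_{ij})$. Decomposing $L=[L_{ij}]$ into $n\times q$ blocks (so $L$ has $n$ block rows and $q$ block columns) and unravelling the column-major vectorizations on both sides, this column identity becomes
\[
\ell^{ij}_{kl}=(\BL_{lj})_{ki},\qquad i,k=1,\ldots,n,\ \ j,l=1,\ldots,q .
\]
Then condition (iii), namely $\ell^{ij}_{kl}=\ov{\ell^{kl}_{ij}}$, reads $(\BL_{lj})_{ki}=\ov{(\BL_{jl})_{ik}}$; on the other hand $\BL=\BL^*$ written blockwise is $(\BL_{rs})_{ab}=\ov{(\BL_{sr})_{ba}}$ for all $r,s=1,\ldots,q$ and $a,b=1,\ldots,n$, which is the same system of scalar equations under the substitution $r=l$, $s=j$, $a=k$, $b=i$. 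Hence (ii)$\Leftrightarrow$(iii), and the three statements are equivalent.

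I expect the only delicate point to be the coordinate identity $\ell^{ij}_{kl}=(\BL_{lj})_{ki}$: one must keep the block indices and the within-block indices apart and match the ranges $\{1,\ldots,n\}$ and $\{1,\ldots,q\}$ correctly, since the block shape of $L$ ($n$ block rows, $q$ block columns, blocks of size $n\times q$) is different from that of $\BL$ ($q\times q$ array of $n\times n$ blocks). Everything downstream is a mechanical rewriting of $\BL=\BL^*$ in coordinates; alternatively one could observe that $\BL$ is obtained from $L$ by a fixed permutation similarity (a canonical shuffle), which preserves Hermitianness, and deduce (ii)$\Leftrightarrow$(iii) from that, but the direct entrywise computation lands precisely on the stated form of (iii).
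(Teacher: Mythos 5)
Your proof is correct, and it is worth noting that the paper itself gives no argument for this theorem: Section 3 explicitly defers all proofs to \cite{tHvdM21b}, so your write-up is a self-contained verification rather than a rival to an in-paper proof. The two substantive steps both check out. For (i)$\Leftrightarrow$(ii), testing $\cL(V^*)=\cL(V)^*$ on (linear combinations of) the matrix units $\cE_{ij}^{(q)}$ gives exactly $\BL_{ji}=\BL_{ij}^*$ for all $i,j$, which is $\BL=\BL^*$ read blockwise. For (ii)$\Leftrightarrow$(iii), the coordinate identity $\ell^{ij}_{kl}=(\BL_{lj})_{ki}$ is the crux, and it is right: column $(j-1)q+i$ of $L$ is $\vect_n(\BL_{ij})$ by \eqref{Matricization} and \eqref{UnitVectId1}, and with the paper's column-stacking convention the entry of $L$ in global position $\bigl((i-1)n+k,\,(j-1)q+l\bigr)$ is $\ell^{ij}_{kl}=(\BL_{lj})_{ki}$; one can sanity-check this on $\cL(V)=AVB^*$, where $L=\ov{B}\otimes A$, $\ell^{ij}_{kl}=\ov{b}_{ij}a_{kl}$ and $(\BL_{lj})_{ki}=a_{kl}\ov{b}_{ij}$. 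Rewriting $\BL=\BL^*$ entrywise then lands exactly on (iii), with the index ranges matching.

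One caveat: your closing aside is not right as stated. The passage from $L$ to $\BL$ is a realignment (a rearrangement of entries, with $L\in\BF^{n^2\times q^2}$ and $\BL\in\BF^{nq\times nq}$ of different shapes when $n\ne q$), not a permutation \emph{similarity} by a canonical shuffle, and it does not preserve Hermitianness in general: for $\cL(V)=\cE_{11}V$ on $\BF^{2\times 2}$ one has $L=I_2\otimes\cE_{11}$ Hermitian while $\BL$ is not. Since you only offer this as an alternative remark and your actual argument is the entrywise computation, the proof stands; just delete or correct that sentence.
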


It is clear what $\BL\in\cH_{nq}$ means. Property (iii) can be interpreted as saying that ``structural properties of $L$ as a block matrix reoccur at the level of the blocks,'' as illustrated by various examples in Subsection 6.1 in \cite{tHvdM21b}. The two observations that are relevant for the present paper are (using the notation of Theorem \ref{T:*-linear}):
\begin{itemize}
\item[(i)] $L_{ij}=0$ if and only if $\ell_{ij}^{kl}=0$ for all $k$ and $l$.

\item[(ii)] $L_{ij}=L_{rs}$ for $(i,j)\neq (r,s)$ if and only if $\ell^{kl}_{ij}=\ell^{kl}_{rs}$ for all $k$ and $l$.

\end{itemize}
This implies in particular that a (upper or lower) block Toeplitz structure in $L$ corresponds to a (upper or lower) Toeplitz structure of the blocks $L_{ij}$.

\subsection{Minimal Hill representations}\label{construction section}

R.D. Hill studied $*$-linear matrix maps $\cL$ as in \eqref{cL} in \cite{H69,H73} via representations of the form
\begin{equation}\label{HillRep}
\cL(V)=\sum_{k,l=1}^m \BH_{kl}\, A_k V A_l^*,\quad V\in\BF^{q \times q},
\end{equation}
for $A_{1},\dots,A_m\in\BF^{n \times q}$ and $\BH=[\BH_{kl}]_{k,l=1}^m\in\BF^{m\times m}$. We call \eqref{HillRep} a {\em Hill representation} of $\cL$ and refer to $\BH$ as the associated {\em Hill matrix}. The Hill representation \eqref{HillRep} of $\cL$ is called minimal if the number $m$ is the smallest among all Hill representations of $\cL$. This number $m$ is equal to the rank of the Choi matrix $\BL$. In the case of a minimal Hill representation, the $A_{1},\dots,A_m\in\BF^{n \times q}$ must be linearly independent.

The results contained in the subsection come from \cite{tHvdM21b}, but most have their origin in \cite{H69,H73}; for proper references please consult \cite{tHvdM21b}.

\begin{theorem}\label{T:Hill}
The linear map $\cL$ in \eqref{cL} is $*$-linear if and only if $\cL$ admits a (minimal) Hill representation with $\BH$ Hermitian. Moreover, $\cL$ is completely positive if and only if $\cL$ admits a (minimal) Hill representation with $\BH$ positive definite.
\end{theorem}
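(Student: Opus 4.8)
The plan is to route both equivalences through the Choi matrix $\BL$ and the factorisation \eqref{ChoiFact}. For the easy half of the first equivalence, suppose $\cL$ admits any Hill representation \eqref{HillRep} with $\BH=\BH^*$. Then for $V\in\BF^{q\times q}$,
\[
\cL(V)^*=\sum_{k,l=1}^m \ov{\BH_{kl}}\,(A_kVA_l^*)^*=\sum_{k,l=1}^m \ov{\BH_{kl}}\,A_lV^*A_k^*=\sum_{k,l=1}^m \ov{\BH_{lk}}\,A_kV^*A_l^*=\sum_{k,l=1}^m \BH_{kl}\,A_kV^*A_l^*=\cL(V^*),
\]
using only $(A_kVA_l^*)^*=A_lV^*A_k^*$, a relabelling $k\leftrightarrow l$, and $\ov{\BH_{lk}}=\BH_{kl}$. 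Hence $\cL$ is $*$-linear, and this works for minimal as well as non-minimal representations.

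For the converse, start from Theorem \ref{T:*-linear}, which gives $\BL=\BL^*\in\cH_{nq}$, and take a rank factorisation respecting the Hermitian structure: by the spectral theorem $\BL=U_m\Lambda_m U_m^*$ with $m=\rank\BL$, $\Lambda_m\in\BF^{m\times m}$ real invertible diagonal, and $U_m\in\BF^{nq\times m}$ with orthonormal columns. Put $\whatA:=U_m^*$ (full row rank), define $A_1,\dots,A_m\in\BF^{n\times q}$ by $\whatA^*=\begin{bmatrix}\vect_{n\times q}(A_1)&\cdots&\vect_{n\times q}(A_m)\end{bmatrix}$, and set $\BH:=\ov{\Lambda_m}$, which is Hermitian because $\Lambda_m$ is real. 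Then $\BL=\whatA^*\BH^T\whatA$ as in \eqref{ChoiFact}; since a linear matrix map is determined by its Choi matrix, and since the map $V\mapsto\sum_{k,l}\BH_{kl}A_kVA_l^*$ has Choi matrix exactly $\whatA^*\BH^T\whatA$ (this is \eqref{ChoiFact} read backwards, via $\vect_n(A_k\cE_{ij}^{(q)}A_l^*)=(\ov{A_l}\otimes A_k)\vect_q(\cE_{ij}^{(q)})$ and the identities \eqref{UnitVectId1}--\eqref{CanShuffleIds}), the pair $(A_k,\BH)$ is a Hill representation of $\cL$ with $\BH$ Hermitian. Minimality follows because any Hill representation forces $\BL=\whatA^*\BH^T\whatA$ with $\whatA$ having $m$ rows, so $\rank\BL\le m$, while the construction attains $m=\rank\BL$; in such a minimal representation the $A_k$ are linearly independent, $\whatA$ has full row rank, and $\rank\BH=\rank\BL=m$, so $\BH$ is invertible.

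For the statement on complete positivity, one direction is immediate: if $\cL$ has a Hill representation with $\BH>0$, then $\BH^T>0$ as well (transpose of positive definite is positive definite), so $\BL=\whatA^*\BH^T\whatA\ge 0$, and $\cL$ is completely positive by the Choi-matrix criterion. Conversely, if $\cL$ is completely positive then $\BL\ge 0$; taking a \emph{minimal} Hill representation gives $\BL=\whatA^*\BH^T\whatA$ with $\whatA\in\BF^{m\times nq}$ of full row rank, hence surjective as a map $\BF^{nq}\to\BF^m$, so for any $w\in\BF^m$ choosing $v$ with $\whatA v=w$ yields $w^*\BH^Tw=v^*\BL v\ge 0$; thus $\BH^T\ge 0$, hence $\BH\ge 0$, and $\rank\BH=\rank\BL=m$ forces $\BH>0$. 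This is the Poluikis--Hill observation recorded after \eqref{ChoiFact}; for a non-minimal representation one only gets $\BH\ge 0$, which is why ``minimal'' is retained in parentheses in the statement.

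The step I expect to be the main obstacle is the precise bookkeeping identifying factorisations $\BL=\whatA^*\BH^T\whatA$ with Hill representations in \emph{both} directions --- in particular verifying that the Choi matrix of $V\mapsto\sum_{k,l}\BH_{kl}A_kVA_l^*$ is exactly $\whatA^*\BH^T\whatA$ --- together with pinning down the lower bound $m\ge\rank\BL$ needed for the minimality claims. Everything else reduces to routine linear algebra (the spectral theorem, rank behaviour under multiplication by full-rank matrices, and the transpose/conjugate identities for positive semidefinite matrices).
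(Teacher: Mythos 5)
Your argument is correct and follows essentially the same route the paper relies on (and defers to \cite{tHvdM21b} and \cite{PH81} for): Hermiticity of the Choi matrix via Theorem \ref{T:*-linear}, a full-row-rank factorization $\BL=\whatA^*\BH^T\whatA$ as in \eqref{ChoiFact}, and the Choi-matrix criterion for complete positivity; your spectral choice $\whatA=U_m^*$ is precisely an admissible $\whatA$ with $\kr\whatA=\kr\BL$, for which \eqref{Choi to Hill} returns $\BH^T=\Lambda_m$. The only quibble is bookkeeping: with the index order of \eqref{HillRep} and the paper's vectorization identities, the Choi matrix of $V\mapsto\sum_{k,l}\BH_{kl}A_kVA_l^*$ comes out as $\whatA^*\BH\whatA$ rather than $\whatA^*\BH^T\whatA$ (the paper itself switches the index order between \eqref{HillRep} and \eqref{LBL-Hill non min}), but since your $\BH=\Lambda_m$ is real diagonal, and in general $\BH^T=\ov{\BH}$ for Hermitian $\BH$, none of the Hermiticity, rank, or positive-definiteness conclusions are affected.
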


It is possible to express the matricization $L$ and Choi matrix $\BL$ of $\cL$ directly in terms of a minimal Hill representation, as explained in the next result.

\begin{proposition}\label{P:HillLBL}
Let $\cL$ be a $*$-linear map as in \eqref{cL} with a minimal Hill representation \eqref{HillRep}. Then the matricization $L$ and Choi matrix $\BL$ associated with $\cL$ are given by
\begin{equation}\label{LBL-Hill}
L=\sum_{k,l=1}^m \BH_{kl}\, \ov{A}_k\otimes A_l \ands \BL=\widehat{A}^*\BH^T\widehat{A},
\end{equation}
with $\widehat{A}^*:=\begin{bmatrix} \vect_{n \times q}\left(A_1\right) & \hdots & \vect_{n \times q}\left(A_m\right)  \end{bmatrix}\in \BF^{nq \times m}$. Moreover, $\whatA$ has full row rank and $\kr \whatA = \kr \BL$.
\end{proposition}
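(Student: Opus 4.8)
The plan is to obtain the two displayed formulas by feeding well-chosen matrices into the minimal Hill representation \eqref{HillRep}, and then to extract the rank and kernel statements from minimality. First I would establish the matricization formula: applying $\vect_n$ to both sides of \eqref{HillRep} term by term and using that $A_l^* = \ov{A_l}^T$, the vectorization identity $\vect(AXB^T)=(B\otimes A)\vect(X)$ from the preliminaries turns each summand $A_kVA_l^*$ into $(\ov{A_l}\otimes A_k)\vect_q(V)$ (the conjugate on the $A$-factor coming precisely from the adjoint on $A_l$). By linearity, $\vect_n(\cL(V))$ is a fixed matrix applied to $\vect_q(V)$ for every $V\in\BF^{q\times q}$, and since $\vect_q$ is onto, the defining relation \eqref{Matricization} identifies that matrix — keeping track of the transpose/index conventions — as $\sum_{k,l}\BH_{kl}\,\ov{A}_k\otimes A_l$.

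Next I would compute the Choi matrix by evaluating $\cL$ at the matrix units. Writing $\cE_{ij}^{(q)}=e_i^{(q)}e_j^{(q)T}$, each summand of \eqref{HillRep} at $\cE_{ij}^{(q)}$ collapses to the rank-one matrix $(A_ke_i^{(q)})(A_le_j^{(q)})^*$, so $\BL_{ij}=\sum_{k,l}\BH_{kl}(A_ke_i^{(q)})(A_le_j^{(q)})^*$. Assembling $\BL=\sum_{i,j}\cE_{ij}^{(q)}\otimes\BL_{ij}$ and using the observation that, straight from the definition of the vectorization operator, $\vect_{n\times q}(A_k)=\sum_i e_i^{(q)}\otimes(A_ke_i^{(q)})$, the double sum over $i,j$ factors and one is left with $\BL=\sum_{k,l}\BH_{kl}\,\vect_{n\times q}(A_k)\,\vect_{n\times q}(A_l)^*$, which is exactly $\whatA^*\BH^T\whatA$ with $\whatA^*$ as in the statement.

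The full row rank of $\whatA$ I would read off immediately: by minimality $A_1,\dots,A_m$ are linearly independent in $\BF^{n\times q}$, hence so are their images under the linear isomorphism $\vect_{n\times q}$, and these images are the columns of $\whatA^*$; thus $\whatA^*$ has full column rank $m$ and $\whatA$ has full row rank. For the kernel identity, $\kr\whatA\subseteq\kr\BL$ is immediate from the factorisation $\BL=\whatA^*\BH^T\whatA$.

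The reverse inclusion is the one delicate point, and the expected obstacle: since $\cL$ is merely $*$-linear and need not be completely positive, $\BL$ need not be positive semidefinite, so the usual argument for the kernel of a positive semidefinite factorisation is unavailable. Instead I would first deduce that $\BH$ is invertible: because $\whatA^*$ is injective and $\whatA$ is surjective, $m=\rank\BL=\rank(\whatA^*\BH^T\whatA)=\rank\BH$, and $\BH\in\BF^{m\times m}$, so $\BH$ has full rank (here $\rank\BL=m$ is exactly the minimality statement recalled before Theorem \ref{T:Hill}). Then $\BL x=0$ gives $\whatA^*(\BH^T\whatA x)=0$, hence $\BH^T\whatA x=0$ by injectivity of $\whatA^*$, hence $\whatA x=0$ since $\BH^T$ is invertible, proving $\kr\BL\subseteq\kr\whatA$. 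In short, the two formulas are pure bookkeeping with the vectorization identities, while the only genuine content is routing the kernel equality through invertibility of the minimal Hill matrix, which in turn rests on the rank identity $\rank\BL=m$.
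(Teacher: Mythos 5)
The paper itself offers no proof of Proposition \ref{P:HillLBL} (it is imported from \cite{tHvdM21b}), and your route --- vectorization identity for the matricization, evaluation at matrix units and the rank-one collapse for the Choi matrix, full row rank of $\whatA$ from linear independence of $A_1,\dots,A_m$ under minimality, and the kernel equality via $\rank\BH=\rank\BL=m$ --- is the natural and essentially correct one; indeed the reverse inclusion $\kr\BL\subseteq\kr\whatA$ also follows even faster by dimension count, since $\dim\kr\whatA=nq-m=nq-\rank\BL=\dim\kr\BL$. Both arguments, like yours, lean on the recalled facts that in a minimal representation $m=\rank\BL$ and the $A_k$ are linearly independent, which the paper states as known background, so that reliance is legitimate here.

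The one place where your bookkeeping silently goes wrong is the final identification in each of the two formulas, and it is worth making explicit because it exposes an inconsistency in the paper's own conventions. Starting from \eqref{HillRep} as literally written, $\cL(V)=\sum_{k,l}\BH_{kl}A_kVA_l^*$, your (correct) intermediate computations give
\begin{equation*}
L=\sum_{k,l=1}^m \BH_{kl}\,\ov{A}_l\otimes A_k=\sum_{k,l=1}^m \BH_{lk}\,\ov{A}_k\otimes A_l
\ands
\BL=\sum_{k,l=1}^m \BH_{kl}\,\vect_{n\times q}(A_k)\vect_{n\times q}(A_l)^*=\whatA^*\,\BH\,\whatA,
\end{equation*}
that is, the identities \eqref{LBL-Hill} with $\BH$ replaced by $\BH^T$. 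Since the Hill matrix of a $*$-linear map is Hermitian but in general not symmetric, $\whatA^*\BH\whatA\ne\whatA^*\BH^T\whatA$ in general, so the phrases ``keeping track of the transpose/index conventions'' and ``which is exactly $\whatA^*\BH^T\whatA$'' paper over a genuine index swap rather than justify it. The stated formulas \eqref{LBL-Hill} are the ones produced by the ordering $\cL(V)=\sum_{k,l}\BH_{kl}A_lVA_k^*$, which is exactly the convention used in the non-minimal analogue \eqref{LBL-Hill non min}; so either adopt that ordering at the outset or state your conclusions with $\BH^T$ in place of $\BH$. The slip is harmless for everything downstream (positive semidefiniteness and rank of $\BH$ and $\BH^T$ coincide, and your rank and kernel arguments go through verbatim), but as written the last step of each computation asserts an equality that fails for a general Hermitian $\BH$.
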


One of the main new features of \cite{tHvdM21b} is a description of the minimal Hill representations of $\cL$. This works as follows. Write the matricization $L$ of $\cL$ as a block matrix
\begin{equation}\label{Lblock}
L=\left[L_{ij}\right]\quad \mbox{with}\quad L_{ij}\in\BF^{n \times q}\quad\mbox{for $i=1,\ldots,n,\, j=1,\ldots,q$}.
\end{equation}
It then follows that $m=\rank \BL$ satisfies
\begin{equation}\label{mAlt}
m=\dim \tu{span}\{ L_{ij} \colon i=1,\ldots,n, \, j=1,\ldots,q\} \subset \BF^{n\times q}.
\end{equation}
It turns out that the matrices $A_{1},\dots,A_m$ in a minimal Hill representation can be taken such that their span corresponds to the span of the blocks $L_{ij}$ of $L$.

\begin{theorem}\label{T:HillA1Am}
Let $\cL$ be a $*$-linear map as in \eqref{cL} with a minimal Hill representation \eqref{HillRep}. Decompose the matricization $L=\left[L_{ij}\right]$ associated with $\cL$ as in \eqref{Lblock}. Then
\begin{equation}\label{AkCond}
\tu{span}\{ A_{k} \colon k=1,\ldots,m \}=\tu{span}\{ L_{ij} \colon i=1,\ldots,n, \, j=1,\ldots,q\}=:\cW.
\end{equation}
Moreover, for any choice of matrices $A_1,\ldots,A_m\in\BF^{n \times q}$ which satisfy \eqref{AkCond} there exists a matrix $\BH=\left[\BH_{kl}\right]\in\BF^{m \times m}$ so that $\cL$ is given by the corresponding minimal Hill representation \eqref{HillRep}.
\end{theorem}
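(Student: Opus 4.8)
The plan is to prove the span identity \eqref{AkCond} for the given minimal Hill representation first, and then to deduce the freedom to prescribe the matrices $A_1,\dots,A_m$ by a change-of-basis argument. For the first part, I would read off the blocks of $L$ from the formula $L=\sum_{k,l=1}^m \BH_{kl}\,\ov{A}_k\otimes A_l$ of Proposition \ref{P:HillLBL}. Writing $A_k=[a^{(k)}_{ij}]$, the $(i,j)$ block of the $n^2\times q^2$ matrix $\ov{A}_k\otimes A_l$, viewed as an $n\times q$ array of $n\times q$ blocks as in \eqref{Lblock}, is $\ov{a^{(k)}_{ij}}\,A_l$, so that
\[
L_{ij}=\sum_{l=1}^m\Big(\sum_{k=1}^m\BH_{kl}\,\ov{a^{(k)}_{ij}}\Big)A_l .
\]
Hence every block $L_{ij}$ lies in $\tu{span}\{A_1,\dots,A_m\}$, giving $\cW\subseteq\tu{span}\{A_k\}$. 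Since the representation is minimal the $A_k$ are linearly independent, so $\dim\tu{span}\{A_k\}=m$, while $\dim\cW=m$ by \eqref{mAlt}; therefore the inclusion is an equality, which is \eqref{AkCond}.

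For the second assertion, fix a minimal Hill representation $\cL(V)=\sum_{k,l=1}^m\BG_{kl}B_kVB_l^*$ with $\BG$ Hermitian, which exists by Theorem \ref{T:Hill}; by the part just proved $B_1,\dots,B_m$ is a basis of $\cW$. Now let $A_1,\dots,A_m$ be any matrices satisfying \eqref{AkCond}. These are $m$ matrices spanning the $m$-dimensional space $\cW$, hence themselves a basis of $\cW$, so there is $S\in\tu{GL}(m,\BF)$ with $B_k=\sum_{j=1}^m S_{jk}A_j$ for $k=1,\dots,m$. Substituting this into the representation and regrouping,
\[
\cL(V)=\sum_{k,l=1}^m\BG_{kl}\Big(\sum_jS_{jk}A_j\Big)V\Big(\sum_iS_{il}A_i\Big)^*=\sum_{j,i=1}^m(S\BG S^*)_{ji}\,A_jVA_i^*,
\]
so that $\BH:=S\BG S^*$ gives a Hill representation of $\cL$ in terms of $A_1,\dots,A_m$; it uses $m=\rank\BL$ matrices, hence is minimal, and $\BH$ is Hermitian since $\BG$ is.

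Both assertions are essentially bookkeeping once Proposition \ref{P:HillLBL} and the dimension formula \eqref{mAlt} are in hand. The one step that requires care is the block computation above — correctly identifying the $(i,j)$ block of $\ov{A}_k\otimes A_l$ and matching it with the block decomposition \eqref{Lblock} of $L$ — together with the elementary but decisive observation that $m$ matrices spanning the $m$-dimensional space $\cW$ form a basis, which is exactly what supplies the invertible change-of-basis matrix $S$ in the second step.
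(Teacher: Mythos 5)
Your proof is correct, but it takes a different route from the source of this result: the present paper does not prove Theorem \ref{T:HillA1Am} at all (it is imported from \cite{tHvdM21b}), and the proof there, whose machinery is recalled in Theorem \ref{T:HillConstruct}, is constructive --- one selects $L_1,\ldots,L_m$ spanning $\cW$, expresses the blocks $L_{ij}$ through them, and builds $A_k$, $B_k$ and $\BH$ explicitly as in \eqref{AkBkH}, or recovers $\BH$ from the Choi matrix via \eqref{Choi to Hill}. You instead read off $\cW\subseteq\tu{span}\{A_k\}$ from the formula for $L$ in Proposition \ref{P:HillLBL}, close the gap with the dimension count $m=\dim\cW$ from \eqref{mAlt} and the linear independence of the $A_k$ in a minimal representation, and then obtain the second assertion by a change of basis $B_k=\sum_j S_{jk}A_j$ and the congruence $\BH=S\BG S^*$; this last step is essentially the ``minimal Hill representations are unique up to an invertible $m\times m$ matrix'' statement (Theorem 5.9 of \cite{tHvdM21b}) run in reverse, so your argument proves existence cleanly but, unlike the constructive route, does not produce $\BH$ from the entries of $L$ as Lemma \ref{L:HillMatSC} does --- which is what the paper actually exploits to compute the Hill--Pick matrix. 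One bookkeeping remark: a direct vectorization of \eqref{HillRep} gives $L=\sum_{k,l}\BH_{kl}\,\ov{A}_l\otimes A_k$, i.e.\ the roles of $k$ and $l$ in the tensor factors are swapped relative to the formula $L=\sum_{k,l}\BH_{kl}\,\ov{A}_k\otimes A_l$ you quote from Proposition \ref{P:HillLBL} (compare also \eqref{LBL-Hill non min}, where the representation is written as $\sum_{k,l}\widetilde{\BH}_{kl}A_lVA_k^*$); since $\BH$ is Hermitian rather than symmetric this is a genuine transpose/conjugate discrepancy in conventions, but it is harmless for your purposes because the span of the blocks $L_{ij}$ does not depend on which scalar coefficients multiply the $A_l$, and your second step is computed directly from the representation itself.
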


The criteria \eqref{AkCond} for $A_1,\ldots,A_m$ can also be expressed in terms of the matrix $\whatA$ defined in Proposition \ref{P:HillLBL}.

\begin{proposition}
Assume $\cL$ as in \eqref{cL} is $*$-linear and let $m$ be the rank of the Choi matrix $\BL$ associated with $\cL$. Let $\whatA\in\BF^{m \times nq}$ with $\kr \whatA=\kr \BL$. Then $\whatA$ has full row rank, so that $\whatA\whatA^*$ is invertible, and we have $\BL=\whatA^* \BH^T \whatA$ with
\begin{equation} \label{Choi to Hill}
\BH^T=\left(\whatA\whatA^*\right)^{-1} \whatA \BL \whatA^* \left(\whatA\whatA^*\right)^{-1}.
\end{equation}
In particular, $\cL$ admits a minimal Hill representation \eqref{HillRep} with $A_k=\vect^{-1}_{n\times q} \left(\what{a}_k^T\right)$, $k=1,\ldots,m$, where $\what{a}_k$ is the $k$-th row of $\whatA$.
\end{proposition}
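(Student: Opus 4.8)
The plan is to prove the three assertions in turn: first the rank claim (and hence invertibility of $\whatA\whatA^*$) by a dimension count, then the factorization $\BL=\whatA^*\BH^T\whatA$ with $\BH$ as in \eqref{Choi to Hill} by recognising $\whatA^*(\whatA\whatA^*)^{-1}\whatA$ as the orthogonal projection onto $\ran\BL$, and finally the ``in particular'' clause by unwinding the definition of the $A_k$ and invoking Theorem \ref{T:HillA1Am}.

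For the rank claim I would argue as follows. Since $\cL$ is $*$-linear, Theorem \ref{T:*-linear} gives $\BL\in\cH_{nq}$, so $\ran\BL=(\kr\BL)^\perp$ and $\dim\kr\BL=nq-m$. From $\kr\whatA=\kr\BL$ we get $\dim\kr\whatA=nq-m$, hence $\rank\whatA=nq-(nq-m)=m$: the matrix $\whatA\in\BF^{m\times nq}$ has full row rank. A routine argument then gives that $G:=\whatA\whatA^*\in\BF^{m\times m}$ is invertible ($Gx=0$ implies $x^*\whatA\whatA^*x=0$, so $\whatA^*x=0$, so $x\in(\ran\whatA)^\perp=\{0\}$).

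Next I would put $P:=\whatA^*G^{-1}\whatA$ and check, using $G^*=G$, that $P^*=P$, that $P^2=P$, and that $\ran P=\ran\whatA^*=(\kr\whatA)^\perp=(\kr\BL)^\perp=\ran\BL$; thus $P$ is the orthogonal projection onto $\ran\BL$, so $P\BL=\BL$ and, taking adjoints and using $\BL^*=\BL$, also $\BL P=\BL$. Defining $\BH$ through $\BH^T:=G^{-1}\whatA\BL\whatA^*G^{-1}$ as in \eqref{Choi to Hill} then yields $\whatA^*\BH^T\whatA=P\BL P=\BL$, and $\BH^T$, hence $\BH$, is Hermitian because $\BL$ and $G$ are.

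Finally, for the ``in particular'' clause, I would set $A_k:=\vect^{-1}_{n\times q}(\what{a}_k^T)$; since $\whatA$ has full row rank its rows $\what{a}_k$ are linearly independent, hence so are the $A_k$, and their span equals $\ran\BL$ under the vectorisation identification, which in turn coincides (up to the transpose/conjugate conventions of the ``hat'' matrices of Lemma \ref{L:MatTrans}) with the span $\cW$ of the blocks of the matricization $L$ of $\cL$. Theorem \ref{T:HillA1Am} then supplies a Hermitian $\BH$ with $\cL(V)=\sum_{k,l}\BH_{kl}A_kVA_l^*$, and comparing Choi matrices via Proposition \ref{P:HillLBL} identifies this $\BH$ with the one of \eqref{Choi to Hill}; since $m=\rank\BL$, this representation is automatically minimal. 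I expect the only genuine content to sit in the projection identity $P\BL P=\BL$ (equivalently the inclusion $\ran\BL\subseteq\ran\whatA^*$), which is exactly where the hypothesis $\kr\whatA=\kr\BL$ and the Hermiticity of $\BL$ are used; the part that will take the most care is bookkeeping the complex conjugations built into $\whatA$ and into the formula $L=\sum_{k,l}\BH_{kl}\ov{A}_k\otimes A_l$, so that the Hill matrix produced by Theorem \ref{T:HillA1Am} matches \eqref{Choi to Hill} exactly.
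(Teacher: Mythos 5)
Your arguments for the first two assertions are correct and complete: the dimension count gives $\rank\whatA=m$, and recognising $P=\whatA^*(\whatA\whatA^*)^{-1}\whatA$ as the orthogonal projection onto $\ran\whatA^*=(\kr\whatA)^\perp=(\kr\BL)^\perp=\ran\BL$ (Hermiticity of $\BL$, i.e.\ Theorem \ref{T:*-linear}, entering exactly there) yields $\whatA^*\BH^T\whatA=P\BL P=\BL$. Since the paper states this proposition without proof (it is recalled from \cite{tHvdM21b}), this self-contained projection argument is a perfectly good route to those two claims.

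The gap sits in the ``in particular'' clause, and it is exactly the conjugation bookkeeping you defer. With $A_k=\vect^{-1}_{n\times q}(\what{a}_k^T)$ one gets $\textup{span}\{\vect_{n\times q}(A_k)\}=\ran\whatA^T=\ov{\ran\whatA^*}=\ov{\ran\BL}$, whereas the space it must equal is $\vect_{n\times q}(\cW)=\ran\BL$; over $\BF=\BC$ these conjugate ranges differ in general, so $\textup{span}\{A_1,\ldots,A_m\}=\ov{\cW}\neq\cW$ and Theorem \ref{T:HillA1Am} does not apply to your $A_k$. A concrete instance: $\cL(V)=SVS^*$ with $S=\diag(1,i)$, so $\BL=\vect(S)\vect(S)^*$ and $m=1$; taking $\whatA=\vect(S)^*$ the recipe gives $A_1=\ov{S}$, and no scalar $\BH_{11}$ makes $\BH_{11}\ov{S}V S^T$ equal to $SVS^*$ (compare the $(1,1)$ and $(1,2)$ entries). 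The repair is to take $A_k=\vect^{-1}_{n\times q}(\what{a}_k^{\,*})$, which is the reading consistent with the convention $\whatA^*=[\,\vect_{n\times q}(A_1)\;\cdots\;\vect_{n\times q}(A_m)\,]$ of Proposition \ref{P:HillLBL}; so the ``$\what{a}_k^T$'' in the statement should be an adjoint when $\BF=\BC$ (over $\BR$ there is no difference, and your sketch is then fine modulo the next point). Note also that the identity $\vect_{n\times q}(\cW)=\ran\BL$, which you assert ``up to conventions'', does need a short proof: apply Proposition \ref{P:HillLBL} and Theorem \ref{T:HillA1Am} to any minimal Hill representation (which exists by Theorem \ref{T:Hill}) to get $\ran\BL=\ran\whatA_0^*=\vect_{n\times q}(\textup{span}\{A_k^0\})=\vect_{n\times q}(\cW)$. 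Alternatively, once $\BL=\whatA^*\BH^T\whatA$ is in hand you can finish without Theorem \ref{T:HillA1Am} altogether: with the corrected $A_k$, the map $V\mapsto\sum_{k,l}\BH_{kl}A_kVA_l^*$ has Choi matrix $\whatA^*\BH^T\whatA=\BL$ (the computation behind Proposition \ref{P:HillLBL} does not use minimality), the Choi matrix determines the linear map, so this map is $\cL$, and the representation is minimal because it uses $m=\rank\BL$ matrices.
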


Formula \eqref{LBL-Hill} for the matricization $L$ together with Theorem \ref{T:HillA1Am} yield the following corollary.

\begin{corollary}\label{C:Lincl}
Let $\cL$ be a $*$-linear map as in \eqref{cL} with matricization $L$. Decompose $L$ as in \eqref{Lblock} and define $\cW$ as in \eqref{AkCond}. Then $L\in \ov{\cW} \otimes \cW$.
\end{corollary}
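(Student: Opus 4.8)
The plan is to read the conclusion directly off the explicit formula for the matricization in terms of a minimal Hill representation. First I would apply Theorem~\ref{T:HillA1Am}: since $\cL$ is $*$-linear it admits a minimal Hill representation \eqref{HillRep}, and moreover the matrices $A_1,\dots,A_m$ occurring in it may be chosen so that $\tu{span}\{A_1,\dots,A_m\}=\cW$, where $\cW=\tu{span}\{L_{ij}\colon i=1,\dots,n,\ j=1,\dots,q\}$ is exactly the space in \eqref{AkCond}. In particular each $A_k$ lies in $\cW$.

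Next I would invoke formula \eqref{LBL-Hill} from Proposition~\ref{P:HillLBL}, which expresses the matricization as $L=\sum_{k,l=1}^m \BH_{kl}\,\ov{A}_k\otimes A_l$. Since $A_k\in\cW$ we have $\ov{A}_k\in\ov{\cW}$, so each summand $\ov{A}_k\otimes A_l$ belongs to the tensor product subspace $\ov{\cW}\otimes\cW\subseteq\BF^{n^2\times q^2}$. As $\ov{\cW}\otimes\cW$ is a linear subspace, the linear combination $L$ of these summands again lies in $\ov{\cW}\otimes\cW$, which is the assertion.

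There is essentially no obstacle here beyond bookkeeping: the content is entirely carried by Theorem~\ref{T:HillA1Am} and Proposition~\ref{P:HillLBL}. The only point worth stating carefully is that the two occurrences of $\cW$ above — as the span of the blocks $L_{ij}$ and as the span of the $A_k$'s — coincide, which is precisely the equality of spans in \eqref{AkCond} guaranteed by Theorem~\ref{T:HillA1Am}; no separate argument is needed. It is also worth remarking that this corollary is the natural companion to Theorem~\ref{T:Main2}: it shows that for an arbitrary $*$-linear $\cL$ the matricization always sits inside a subspace of the form $\ov{\cW}\otimes\cW$, so the hypothesis of Theorem~\ref{T:Main2} is a genuine strengthening only through the requirement that $\cW$ (more precisely, $\{C\}_\BF''$ and $\ov{\{C\}_\BF''}$) be a bicommutant.
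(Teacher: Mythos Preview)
Your proof is correct and follows essentially the same route as the paper: the corollary is stated immediately after Proposition~\ref{P:HillLBL} and Theorem~\ref{T:HillA1Am} precisely because it follows at once from combining the formula $L=\sum_{k,l}\BH_{kl}\,\ov{A}_k\otimes A_l$ in \eqref{LBL-Hill} with the fact that $\tu{span}\{A_k\}=\cW$.
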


Although any matrices $A_1,\ldots,A_m$ satisfying \eqref{AkCond} can be used in a minimal Hill representation of $\cL$ and $\BH$ can be recovered via \eqref{Choi to Hill}, we present here an explicit construction from \cite{tHvdM21b}, that will also be of use when we consider non-minimal Hill representations. Take $L_1,\ldots,L_m\in\BF^{n \times q}$ which satisfy
\begin{equation}\label{LkCond}
\tu{span}\{ L_{ij} \colon i=1,\ldots,n, \, j=1,\ldots,q \} =\tu{span}\{ L_{k} \colon k=1,\ldots,m \}.
\end{equation}
Then there exists $\al^{ij}_k, \be_{ij}^k\in\BF$ for $i=1,\ldots,n$, $j=1,\ldots,q$ and $k=1,\ldots,m$, so that
\begin{equation}\label{LkLij}
L_k =\sum_{i=1}^n\sum_{j=1}^q \be_{ij}^k L_{ij},\quad L_{ij}=\sum_{k=1}^m \al^{ij}_k L_k.
\end{equation}
Note that the $\al_k^{ij}$ are uniquely determined, but that this is not necessarily the case for the $\be_{ij}^k$.  Define
\begin{equation}\label{AkBkH}
\begin{aligned}
A_k=\left[\ov{\al}_{k}^{ij}\right]\in \BF^{n \times q}, \quad B_k=\left[\be_{ij}^k\right]\in\BF^{n \times q} \quad \text{for} \quad k=1,\ldots,m;\\
\BH=\BH\left(\cL;L_1,\ldots,L_m\right):=\left[\OneVec_n^*\left(B_k \circ \ov{L}_l\right)\OneVec_q\right]_{k,l=1}^m\in\BF^{m \times m}.
\end{aligned}
\end{equation}
Note that we have
\begin{equation}\label{LLk}
L=\sum_{k=1}^m \ov{A}_k \otimes L_k \ands L_k=\left(\OneVec_n \otimes I_n\right)^*\left(\left(B_k \otimes \BBone_{n \times q}\right) \circ L\right)\left(\OneVec_q\otimes I_q\right).
\end{equation}

\begin{theorem}\label{T:HillConstruct}
Let $\cL$ be a $*$-linear map as in \eqref{cL} and select matrices $L_1,\ldots,L_m\in\BF^{n \times q}$ satisfying \eqref{LkCond}. Then $\cL$ is given by the minimal Hill representation \eqref{HillRep} with $A_1,\ldots,A_m$ and $\BH$ as in \eqref{AkBkH}. Moreover, all minimal Hill representations of $\cL$ are obtained in this way.
\end{theorem}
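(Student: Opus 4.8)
The plan is to reduce the whole statement to the single matricization identity
\[
L=\sum_{k,l=1}^{m}\BH_{kl}\,\ov{A}_k\otimes A_l ,
\]
with $A_k$ and $\BH$ as in \eqref{AkBkH}, and then read the conclusion off Proposition \ref{P:HillLBL} and Theorem \ref{T:HillA1Am}. First I would record two formal facts. Since $L_1,\dots,L_m$ span $\cW$ and $\dim\cW=m$, they form a basis of $\cW$, so the scalars $\al^{ij}_k$ in \eqref{LkLij} are well defined; and reading off the $(i,j)$-block of $\sum_k\ov{A}_k\otimes L_k$ gives $\sum_k\al^{ij}_kL_k=L_{ij}$, i.e.\ the first identity in \eqref{LLk}, namely $L=\sum_k\ov{A}_k\otimes L_k$.

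The heart of the argument, and the only place where $*$-linearity is used, is the identity
\[
L_{cd}=\sum_{l=1}^m\ov{(L_l)_{cd}}\,A_l \qquad\text{for every block position }(c,d).
\]
To prove it I would apply the symmetry $\ell^{ij}_{cd}=\ov{\ell^{cd}_{ij}}$ of Theorem \ref{T:*-linear}(iii) to the relation $L_{ij}=\sum_k\al^{ij}_kL_k$, which reads entrywise as $\ell^{ij}_{cd}=\sum_k\al^{ij}_k(L_k)_{cd}$: the $(i,j)$-entry of $L_{cd}$ is $\ell^{cd}_{ij}=\ov{\ell^{ij}_{cd}}=\sum_k\ov{\al^{ij}_k}\,\ov{(L_k)_{cd}}=\sum_k(A_k)_{ij}\ov{(L_k)_{cd}}$, and collecting these entries over $(i,j)$ gives the claimed identity. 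In particular $\cW=\tu{span}\{L_{cd}\}\subseteq\tu{span}\{A_l\}$, so the dimension count forces $\tu{span}\{A_l\}=\cW$ and the $A_l$ linearly independent, i.e.\ condition \eqref{AkCond} holds. Substituting the identity into $L_k=\sum_{c,d}\be^k_{cd}L_{cd}$ yields $L_k=\sum_l\big(\sum_{c,d}\be^k_{cd}\ov{(L_l)_{cd}}\big)A_l=\sum_l\BH_{kl}A_l$ with $\BH_{kl}=\OneVec_n^*(B_k\circ\ov{L}_l)\OneVec_q$ exactly as in \eqref{AkBkH}, and feeding this back into $L=\sum_k\ov{A}_k\otimes L_k$ produces the displayed matricization identity. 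Since \eqref{AkCond} holds, Theorem \ref{T:HillA1Am} yields a minimal Hill representation of $\cL$ with these $A_l$ and some Hill matrix $\BH'$; by Proposition \ref{P:HillLBL} that $\BH'$ also satisfies $L=\sum_{k,l}\BH'_{kl}\ov{A}_k\otimes A_l$, so comparing it with the matricization identity and using that the $\ov{A}_k\otimes A_l$ are linearly independent forces $\BH'=\BH$. This settles the first assertion.

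For the converse, I would begin with an arbitrary minimal Hill representation of $\cL$, say with matrices $\wtil{A}_1,\dots,\wtil{A}_m$ and Hill matrix $\wtil{\BH}$, and set $L_k:=\sum_l\wtil{\BH}_{kl}\wtil{A}_l$. Proposition \ref{P:HillLBL} gives $L=\sum_k\ov{\wtil{A}}_k\otimes L_k$, whose $(i,j)$-block is $L_{ij}=\sum_k\ov{(\wtil{A}_k)_{ij}}\,L_k$; since every $L_{ij}\in\tu{span}\{L_k\}$ while every $L_k\in\tu{span}\{\wtil{A}_l\}=\cW$ by Theorem \ref{T:HillA1Am}, we get $\tu{span}\{L_k\}=\cW$, so \eqref{LkCond} holds and the $L_k$ form a basis of $\cW$. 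Hence the unique coefficients in $L_{ij}=\sum_k\al^{ij}_kL_k$ are $\al^{ij}_k=\ov{(\wtil{A}_k)_{ij}}$, the matrices built in \eqref{AkBkH} from this choice are $A_k=[\ov{\al^{ij}_k}]=\wtil{A}_k$, and by the first part the accompanying Hill matrix is the unique one compatible with these $A_k$ via Proposition \ref{P:HillLBL}, namely $\wtil{\BH}$. Thus every minimal Hill representation arises from the construction. I expect the structural identity $L_{cd}=\sum_l\ov{(L_l)_{cd}}A_l$ to be the only genuine obstacle; the rest is bookkeeping with block versus within-block indices together with direct appeals to Proposition \ref{P:HillLBL} and Theorem \ref{T:HillA1Am}.
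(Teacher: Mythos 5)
Your proposal is correct. Note that the paper itself offers no proof of Theorem \ref{T:HillConstruct} (it is recalled from \cite{tHvdM21b}), so the comparison is with the construction it sketches: the engine of your argument --- the identity $L_{cd}=\sum_{l}\ov{(L_l)_{cd}}\,A_l$, equivalently $L=\sum_{l}\ov{L}_l\otimes A_l$, obtained by applying the symmetry of Theorem \ref{T:*-linear}(iii) to $L_{ij}=\sum_k\al^{ij}_kL_k$ --- is precisely the kind of auxiliary identity that the paper's outline of the non-minimal analogue attributes to Lemma 5.2 and Proposition 5.3 of \cite{tHvdM21b}, so you are in essence reconstructing the intended proof. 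The bookkeeping is sound: $L=\sum_k\ov{A}_k\otimes L_k$ gives the first identity in \eqref{LLk}, substitution yields $L_k=\sum_l\BH_{kl}A_l$ and hence $L=\sum_{k,l}\BH_{kl}\ov{A}_k\otimes A_l$, the dimension count legitimately forces \eqref{AkCond} together with linear independence of the $A_k$ (hence of the Kronecker products $\ov{A}_k\otimes A_l$), and the converse direction with $L_k:=\sum_l\wtil{\BH}_{kl}\wtil{A}_l$ correctly recovers the given representation, including the observation that the ambiguity in the $\be_{ij}^k$ cannot affect $\BH$. Two remarks. First, instead of invoking Theorem \ref{T:HillA1Am} and then matching Hill matrices through Proposition \ref{P:HillLBL}, you could un-vectorize your identity $L=\sum_{k,l}\BH_{kl}\ov{A}_k\otimes A_l$ directly via $\vect\left(A_lVA_k^*\right)=\left(\ov{A}_k\otimes A_l\right)\vect(V)$; this makes the argument independent of Theorem \ref{T:HillA1Am}, which in the source reference is developed hand in hand with the statement you are proving, so the shortcut removes any appearance of circularity. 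Second, that direct computation produces $\cL(V)=\sum_{k,l}\BH_{kl}A_lVA_k^*$, i.e.\ the index order of \eqref{LBL-Hill non min} rather than of \eqref{HillRep}; this $\BH$-versus-$\BH^T$ wrinkle is a convention issue already present in the paper (compare \eqref{LBL-Hill} with \eqref{LBL-Hill non min}) and does not affect your argument, since you consistently use Proposition \ref{P:HillLBL} as stated.
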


Finally, the minimal Hill representation of a $*$-linear map $\cL$ is unique up to an invertible $m \times m$ matrix. 
See Theorem 5.9 in \cite{tHvdM21b} for the result and an explicit formula for the invertible matrix.

For the Hill matrix $\BH$ as in \eqref{AkBkH} it is always possible to take for $L_1,\ldots,L_m$ $m$ linearly independent matrices among the blocks $L_{ij}$ of L, say
\begin{equation}\label{LkLijChoice}
\begin{aligned}
L_k=L_{i_k j_k},\quad k=1,\ldots,m,&\quad i_k\in\{1,\ldots,n\},\quad j_k \in \{1,\ldots,q\}.
\end{aligned}
\end{equation}
In that case, the Hill matrix can be described in terms of the entries of $L$.

\begin{lemma}\label{L:HillMatSC}
Assume $\cL$ in \eqref{cL} is $*$-linear. Decompose the matricization $L$ as in \eqref{Lblock} with $L_{ij}=\left[\ell_{kl}^{ij}\right]_{k,l=1}^n \in \BF^{n \times q}$ and select linearly independent $L_1,\ldots,L_m$ as in \eqref{LkLijChoice}. Then the Hill matrix $\BH$ in \eqref{AkBkH} determined by $\cL$ and $L_1,\ldots,L_m$ is given by
\[
\BH=\left[\ell_{i_l j_l}^{i_k j_k}\right]_{k,l=1}^m.
\]
\end{lemma}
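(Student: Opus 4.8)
The plan is to unwind the definition of the Hill matrix $\BH=\BH(\cL;L_1,\dots,L_m)$ in \eqref{AkBkH} and identify its entries with entries of $L$, the only nontrivial input being the $*$-linearity characterization in Theorem \ref{T:*-linear}(iii). First I would observe that for any $M\in\BF^{n\times q}$ the scalar $\OneVec_n^*M\OneVec_q$ is just the sum of all entries of $M$. Since $B_k=\left[\be_{ij}^k\right]$ and, by the choice \eqref{LkLijChoice}, $L_l=L_{i_lj_l}=\left[\ell^{i_lj_l}_{ij}\right]$, the Hadamard product $B_k\circ\ov{L}_l$ has $(i,j)$ entry $\be^k_{ij}\,\ov{\ell^{i_lj_l}_{ij}}$, so
\[
\BH_{kl}=\OneVec_n^*\left(B_k\circ\ov{L}_l\right)\OneVec_q=\sum_{i=1}^n\sum_{j=1}^q \be^k_{ij}\,\ov{\ell^{i_lj_l}_{ij}}.
\]

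Next I would apply Theorem \ref{T:*-linear}(iii) with block position $(i_l,j_l)$ and entry position $(i,j)$, which gives $\ell^{i_lj_l}_{ij}=\ov{\ell^{ij}_{i_lj_l}}$, hence $\ov{\ell^{i_lj_l}_{ij}}=\ell^{ij}_{i_lj_l}$. Substituting,
\[
\BH_{kl}=\sum_{i=1}^n\sum_{j=1}^q \be^k_{ij}\,\ell^{ij}_{i_lj_l}.
\]
Now recall from \eqref{LkLij} that $L_k=\sum_{i,j}\be^k_{ij}L_{ij}$; reading off the $(i_l,j_l)$ entry on both sides, the left side is $\ell^{i_kj_k}_{i_lj_l}$ (using again $L_k=L_{i_kj_k}$) while the right side is exactly $\sum_{i,j}\be^k_{ij}\ell^{ij}_{i_lj_l}$. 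Combining the two displays yields $\BH_{kl}=\ell^{i_kj_k}_{i_lj_l}$ for all $k,l=1,\dots,m$, which is the claim.

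I do not expect a real obstacle here; the whole argument is bookkeeping. The one point that requires care is keeping the two layers of indices straight — the block position $(i,j)$ in the decomposition \eqref{Lblock} versus the entry position inside a block — and, correspondingly, applying the symmetry relation of Theorem \ref{T:*-linear}(iii) with $(k,l)$ replaced by the block position $(i_l,j_l)$, not by the summation variables. It is also worth noting explicitly that the coefficients $\be^k_{ij}$ appearing in $B_k$ in \eqref{AkBkH} are by definition the ones expressing $L_k$ through the $L_{ij}$ as in \eqref{LkLij}, so their (possible) non-uniqueness is irrelevant: the final value $\ell^{i_kj_k}_{i_lj_l}$ does not involve them.
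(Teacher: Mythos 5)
Your proof is correct: the paper states Lemma \ref{L:HillMatSC} without proof (it is imported from \cite{tHvdM21b}), and your computation is exactly the expected verification — expand $\OneVec_n^*\left(B_k\circ\ov{L}_l\right)\OneVec_q$, use Theorem \ref{T:*-linear}(iii) to flip $\ov{\ell^{i_lj_l}_{ij}}$ into $\ell^{ij}_{i_lj_l}$, and read off the $(i_l,j_l)$ entry of $L_k=\sum_{i,j}\be^k_{ij}L_{ij}=L_{i_kj_k}$. Your remarks on keeping the block/entry index layers straight and on the irrelevance of the non-uniqueness of the $\be^k_{ij}$ are precisely the right points of care, so nothing is missing.
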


\subsection{Non-minimal Hill representations}\label{SubS:NonMinHill}

For the purpose of this paper it will also be convenient to consider non-minimal Hill representations. In this subsection we look at what parts of the construction of minimal Hill representations at end of the previous subsection still works in the non-minimal case.

Choose $L_1, \ldots, L_r \in \BF^{n \times q}$ among the blocks $L_{ij}$ of L:
\begin{equation}\label{LkLijChoice non-min}
\begin{aligned}
L_k=L_{i_k j_k},\quad k=1,\ldots,r,&\quad i_k\in\{1,\ldots,n\}, \quad j_k \in \{1,\ldots,q\},
\end{aligned}
\end{equation}
where  $r\ge m=\rank \,\BL$, so that
\begin{equation}\label{span prop}
\textup{span}\{L_1, \ldots, L_r\}=\textup{span}\{L_{ij}: i=1,\ldots,n, \, j=1,\ldots,q\}.
\end{equation}
Following the construction as given in Subsection \ref{construction section}, there exist scalars $\al^{ij}_k, \be_{ij}^k\in\BF$ for $i=1,\ldots,n$, $j=1,\ldots,q$ and $k=1,\ldots,r$, so that
\begin{equation}\label{LkLij non-min}
L_k =\sum_{i=1}^n\sum_{j=1}^q \be_{ij}^k L_{ij},\quad L_{ij}=\sum_{k=1}^r \al^{ij}_k L_k.
\end{equation}
In this case, both the $\be_{ij}^k$ and the $\al^{ij}_k$ need not be unique. It is, in particular, possible to choose these numbers so that
\begin{align}\label{choice for B_k}
    \beta_{i_kj_k}^k&=1 \quad \text{and} \quad \beta_{ij}^k=0 \quad \text{for all} \quad (i,j)\ne(i_k,j_k);\\ \label{choice for A_k} \alpha_k^{i_kj_k}&=1  \quad \text{and} \quad \alpha_{l}^{i_kj_k}=0 \quad \text{for all} \quad l\ne k,
\end{align}
and in the remainder of this section we shall assume \eqref{choice for B_k} and \eqref{choice for A_k} to hold. Set
\begin{equation}\label{AkBk non-min}
A_k=\left[\ov{\al}_{k}^{ij}\right]\in \BF^{n \times q} \quad \text{and} \quad B_k=\left[\be_{ij}^k\right]\in\BF^{n \times q} \quad \text{for} \quad k=1,\ldots,r.
\end{equation}
From our choice \eqref{choice for A_k} we see that the matrices $A_1, \ldots, A_r$ are still linearly independent since the $(i_k,j_k)$-th entry of $\sum_{k=1}^r \eta_k A_k$ is equal to $\eta_k$. Hence $\sum_{k=1}^r \eta_k A_k=0$ holds only in the case where $\eta_k=0$ for all $k$. Also note that \eqref{choice for B_k} gives $B_k=\mathcal{E}^{(n,q)}_{i_kj_k}$ for all $k.$ It still follows that
\begin{equation}\label{LLk non-min}
L=\sum_{k=1}^r \ov{A}_k \otimes L_k \ands L_k=\left(\OneVec_n \otimes I_n\right)^*\left(\left(B_k \otimes \BBone_{n \times q}\right) \circ L\right)\left(\OneVec_q\otimes I_q\right).
\end{equation}
Define the Hill matrix associated with the selection $L_1,\ldots,L_r$ as:
\begin{equation}\label{Hill non-min}
\widetilde{\BH}=\widetilde{\BH}\left(\cL;L_1,\ldots,L_r\right):=\left[\OneVec_n^*\left(B_k \circ \ov{L}_l\right)\OneVec_q\right]_{k,l=1}^r\in\BF^{r \times r}.
\end{equation}

Then the $*$-linear map $\cL$ admits a non-minimal Hill representation with $A_1, \ldots, A_r$ and $\widetilde{\BH}$ as constructed above, as follows from the next theorem, which is an analogy to our main result in \cite[Theorem 5.1]{tHvdM21b}.

\begin{theorem}
Assume $\cL$ as in \eqref{cL} is $*$-linear with matricization $L$ and Choi matrix $\BL$. Choose $L_1, \ldots, L_r$ so that \eqref{LkLijChoice non-min} and \eqref{span prop} hold and define $A_1, \ldots, A_r$ and $\widetilde{\BH}$ as above. Then \begin{equation}\label{LBL-Hill non min}
\cL(V)=\sum_{k,l=1}^r \widetilde{\BH}_{kl}A_lVA_k^*, \quad
L=\sum_{k,l=1}^r \widetilde{\BH}_{kl}\, \ov{A}_k\otimes A_l \ands \BL=\widehat{A}^*\widetilde{\BH}^T\widehat{A},
\end{equation}
for each $V \in \BF^{q \times q}$, where $\widehat{A}^*:=\begin{bmatrix} \vect_{n \times q}\left(A_1\right) & \hdots & \vect_{n \times q}\left(A_r\right)  \end{bmatrix}\in \BF^{nq \times r}$. Moreover, $\widetilde{\BH}$ is in $\cH_r$ with $\rank \, \widetilde{\BH}=\rank \,\BL$ and $\whatA$ has full row rank.
\end{theorem}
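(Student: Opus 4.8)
The plan is to re-run, in the non-minimal setting, the computation that establishes \cite[Theorem 5.1]{tHvdM21b} for minimal Hill representations; the whole point is to check that the deliberate normalizations \eqref{choice for B_k}--\eqref{choice for A_k} keep every step valid even though now $r\ge m=\rank\BL$ and the scalars $\al^{ij}_k,\be^k_{ij}$ need not be unique. Everything reduces to the single identity
\begin{equation*}
L_k=\sum_{l=1}^{r}\widetilde{\BH}_{kl}A_l,\qquad k=1,\dots,r,
\end{equation*}
relating the selected blocks $L_k=L_{i_kj_k}$ of $L$ to the matrices $A_l$ via the Hill matrix $\widetilde{\BH}$; once this is available, the three displayed formulas and the rank assertions follow by the same manipulations as in the minimal case.

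To prove the identity I would use the $*$-linearity relation of Theorem \ref{T:*-linear}(iii). Writing the blocks as $L_{ij}=\big[\ell^{ij}_{ab}\big]_{a,b=1}^{n}$ as in Lemma \ref{L:HillMatSC}, so that $\ell^{ij}_{ab}=\ov{\ell^{ab}_{ij}}$, and recalling $B_k=\cE^{(n,q)}_{i_kj_k}$, the definition \eqref{Hill non-min} reads $\widetilde{\BH}_{kl}=\big(\ov{L}_l\big)_{i_kj_k}=\ov{\ell^{i_lj_l}_{i_kj_k}}=\ell^{i_kj_k}_{i_lj_l}$, which is the non-minimal analogue of Lemma \ref{L:HillMatSC}; in particular $\widetilde{\BH}_{kl}=\ell^{i_kj_k}_{i_lj_l}=\ov{\ell^{i_lj_l}_{i_kj_k}}=\ov{\widetilde{\BH}_{lk}}$, so $\widetilde{\BH}\in\cH_r$. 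On the other hand, taking the $(i_k,j_k)$-entry of the expansion $L_{ab}=\sum_l\al^{ab}_lL_l$ from \eqref{LkLij non-min} gives $\ell^{ab}_{i_kj_k}=\sum_l\al^{ab}_l\,\ell^{i_lj_l}_{i_kj_k}$; applying the $*$-linearity relation to each factor and conjugating turns this into $\ell^{i_kj_k}_{ab}=\sum_l\ov{\al^{ab}_l}\,\ell^{i_kj_k}_{i_lj_l}=\sum_l\widetilde{\BH}_{kl}(A_l)_{ab}$, which is exactly the asserted identity since $A_l=\big[\ov{\al}^{ij}_l\big]$ and $(L_k)_{ab}=\ell^{i_kj_k}_{ab}$. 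Feeding this into $L=\sum_k\ov{A}_k\otimes L_k$ from \eqref{LLk non-min} yields the matricization formula $L=\sum_{k,l}\widetilde{\BH}_{kl}\,\ov{A}_k\otimes A_l$.

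The rest is routine. Applying $\vect_n^{-1}$ to $L\,\vect_q(V)$ and using $\vect_n(A_lVA_k^*)=(\ov{A}_k\otimes A_l)\vect_q(V)$ (the identity $\vect(AXB^T)=(B\otimes A)\vect(X)$ with $B=\ov{A}_k$) gives $\cL(V)=\sum_{k,l}\widetilde{\BH}_{kl}A_lVA_k^*$; evaluating at $V=\cE^{(q)}_{ij}$ gives $\BL_{ij}=\sum_{k,l}\widetilde{\BH}_{kl}(A_le^{(q)}_i)(A_ke^{(q)}_j)^*$, and assembling $\BL=\sum_{i,j}\cE^{(q)}_{ij}\otimes\BL_{ij}$ together with $\vect_{n\times q}(A_l)=\sum_ie^{(q)}_i\otimes(A_le^{(q)}_i)$ collapses this to $\BL=\sum_{k,l}\widetilde{\BH}_{kl}\vect_{n\times q}(A_l)\vect_{n\times q}(A_k)^*=\whatA^*\widetilde{\BH}^T\whatA$, as in Proposition \ref{P:HillLBL}. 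Linear independence of $A_1,\dots,A_r$ was already observed before the theorem (the $(i_k,j_k)$-entry of $\sum_k\eta_kA_k$ equals $\eta_k$), hence the columns $\vect_{n\times q}(A_1),\dots,\vect_{n\times q}(A_r)$ of $\whatA^*$ are linearly independent, so $\whatA^*$ has full column rank and $\whatA$ has full row rank; then $\rank\BL=\rank(\whatA^*\widetilde{\BH}^T\whatA)=\rank\widetilde{\BH}$, since pre-multiplication by the injective $\whatA^*$ and post-multiplication by the surjective $\whatA$ leave rank unchanged.

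I expect the only real difficulty to be notational rather than mathematical: one must keep the block indices $(i,j)$ strictly separate from the entry indices $(a,b)$ while invoking $\ell^{ij}_{ab}=\ov{\ell^{ab}_{ij}}$ several times, and one must make sure the non-uniqueness of the scalars $\al^{ij}_k,\be^k_{ij}$ in the non-minimal regime never propagates into the final formulas. The latter is precisely what \eqref{choice for B_k}--\eqref{choice for A_k} secure: they fix $B_k=\cE^{(n,q)}_{i_kj_k}$ and force the $(i_l,j_l)$-entry of $A_k$ to equal $\delta_{kl}$, which is what makes both the key identity above and the linear independence of the $A_k$ carry over verbatim from the minimal case.
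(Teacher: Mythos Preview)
Your proposal is correct and follows essentially the same approach as the paper's outline: re-run the minimal-case argument of \cite[Theorem 5.1]{tHvdM21b}, checking that the normalizations \eqref{choice for B_k}--\eqref{choice for A_k} make every step go through. The paper phrases this by observing that the orthogonality relations $\OneVec_n^*(B_k\circ\ov{A}_l)\OneVec_q=\delta_{kl}$ of \cite[Lemma 5.2]{tHvdM21b} still hold and then invoking \cite[Proposition 5.3]{tHvdM21b} and Lemma \ref{L:MatTrans}, whereas you unpack the same content directly via the entrywise $*$-linearity relation of Theorem \ref{T:*-linear}(iii) to derive $L_k=\sum_l\widetilde{\BH}_{kl}A_l$; this is the same argument made self-contained. (One harmless slip: your entry index $b$ should range over $1,\ldots,q$, not $1,\ldots,n$, since $L_{ij}\in\BF^{n\times q}$.)
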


\begin{proof}[\bf Outline of proof]
From the choice we made for $A_k$ in \eqref{choice for A_k} and $B_k$ in \eqref{choice for B_k}, for $k=1,\ldots,r$, the identities in Lemma 5.2 in \cite{tHvdM21b}, given by
\[\OneVec_n^*\left(B_k \circ \ov{A}_k\right)\OneVec_q=\alpha_k^{i_kj_k}=1 \quad \text{and} \quad \OneVec_n^*\left(B_k \circ \ov{A}_l\right)\OneVec_q=\alpha_l^{i_kj_k}=0 \quad \text{for} \quad l \neq k,\] still hold. Since only these identities are needed to prove Proposition 5.3 in \cite{tHvdM21b}, we get the representation of $L$ in \eqref{LBL-Hill non min} and the fact that $\widetilde{\BH}$ belong to $\cH_r$. The representation of $\BL$ and the non-minimal Hill representation of $\cL$ in \eqref{LBL-Hill non min} follows by the same arguments as in the proof of Theorem 5.1 in \cite{tHvdM21b}. The full row rank of $\widehat{A}$ follows by Lemma \ref{L:MatTrans}. Now, since $\BL=\widehat{A}^*\widetilde{\BH}^T\widehat{A}$, with $\widehat{A}$ having full row rank, it follows at once from \eqref{LBL-Hill non min} that $\rank\, \BL = \rank \, \widetilde{\BH}.$
\end{proof}

Hence, $\widetilde{\BH}$ need not be invertible, as in case of a minimal Hill representation. Moreover, we no longer have that $\kr \,\BL=\kr \widehat{A}$. This is because $\rank \widehat{A}=r$ and $\rank \, \BL = m$, and therefore $\dim \kr \, \BL=nq-m$ and $\dim \kr \widehat{A}=nq-r.$ When $r>m$,  we only have the one inclusion $\kr \widehat{A} \subset \kr \BL$, since the dimensions do not add up. Furthermore, from the fact that \[\dim \textup{span} \{A_1, \ldots, A_r\} =r \ands \dim \textup{span} \{L_{ij}: \, i=1, \ldots, n, \, j=1,\ldots,q\}=m\] it follows that the equality in \eqref{AkCond}, with $m$ replaced by $r$, no longer hold. Instead, from (iii) in Proposition 5.3 in \cite{tHvdM21b} we get \[\textup{span} \{L_{ij}: \, i=1, \ldots, n, \, j=1,\ldots,q\}=\textup{span}\{L_1, \ldots, L_r\}\subset \textup{span} \{A_1, \ldots,A_r\}.\]

We conclude with a result on the complete positivity of a $*$-linear map $\cL$ as constructed above.
\begin{theorem}\label{T:Hill non-min}
Assume $\cL$ as in \eqref{cL} is $*$-linear. Define $L$ as in \eqref{Matricization} and $\BL$ as in \eqref{Choi} and let $m=\rank \, \BL.$ Choose $L_1, \ldots, L_r$ so that \eqref{LkLijChoice non-min} and \eqref{span prop} hold and define $A_1, \ldots, A_r$ as in \eqref{AkBk non-min} and $\widetilde{\BH}$ as in \eqref{Hill non-min}. Then $\cL$ is completely positive if and only if $\widetilde{\BH}$ is positive semidefinite.
\end{theorem}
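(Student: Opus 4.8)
The plan is to reduce the non-minimal statement to the minimal case handled in \cite{tHvdM21b} (i.e., the Poluikis--Hill criterion recorded in Theorem \ref{T:Hill}), exploiting the factorization $\BL=\widehat{A}^*\widetilde{\BH}^T\widehat{A}$ from \eqref{LBL-Hill non min} together with the fact that $\widehat{A}$ has full row rank $r$ and $\rank\,\widetilde{\BH}=\rank\,\BL=m$. First I would recall that $\cL$ is completely positive if and only if $\BL\geq 0$; this is the Choi criterion, which holds for any $*$-linear matrix map. So the entire theorem comes down to showing $\BL\geq 0 \iff \widetilde{\BH}\geq 0$.

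For the direction $\widetilde{\BH}\geq 0 \Rightarrow \BL\geq 0$: since $\BL=\widehat{A}^*\widetilde{\BH}^T\widehat{A}$ and $\widetilde{\BH}\geq 0$ implies $\widetilde{\BH}^T\geq 0$ (transposition preserves positive semidefiniteness of Hermitian matrices), we get $\BL=\widehat{A}^*\widetilde{\BH}^T\widehat{A}\geq 0$ immediately as a congruence of a positive semidefinite matrix. This direction needs no minimality and no full rank hypothesis.

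For the converse $\BL\geq 0 \Rightarrow \widetilde{\BH}\geq 0$: here is where the full row rank of $\widehat{A}$ is essential. Since $\widehat{A}\in\BF^{r\times nq}$ has full row rank $r$, it admits a right inverse, say $\widehat{A}^{+}=\widehat{A}^*(\widehat{A}\widehat{A}^*)^{-1}\in\BF^{nq\times r}$ with $\widehat{A}\widehat{A}^{+}=I_r$. Then from $\BL=\widehat{A}^*\widetilde{\BH}^T\widehat{A}$ we recover
\[
(\widehat{A}^{+})^*\BL\,\widehat{A}^{+}=(\widehat{A}^{+})^*\widehat{A}^*\,\widetilde{\BH}^T\,\widehat{A}\,\widehat{A}^{+}=(\widehat{A}\widehat{A}^{+})^*\,\widetilde{\BH}^T\,(\widehat{A}\widehat{A}^{+})=\widetilde{\BH}^T.
\]
Thus $\widetilde{\BH}^T$ is a congruence of $\BL\geq 0$, hence $\widetilde{\BH}^T\geq 0$, hence $\widetilde{\BH}\geq 0$. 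Combining the two directions with the Choi criterion yields the theorem. I do not anticipate a serious obstacle; the only point requiring care is keeping the transposes straight (the factorization involves $\widetilde{\BH}^T$, not $\widetilde{\BH}$, but Hermitian positive semidefiniteness is transposition-invariant), and noting that both $\widehat{A}$ having full row rank and the identities in \eqref{LBL-Hill non min} are already supplied by the preceding theorem, so nothing new about the construction needs to be established here. One could alternatively phrase the converse without an explicit right inverse by noting that $\ran\widehat{A}^*=\BF^{r}$ forces every $y\in\BF^{r}$ to be of the form $\widehat{A}x$, so $y^*\widetilde{\BH}^T y=x^*\widehat{A}^*\widetilde{\BH}^T\widehat{A}x=x^*\BL x\geq 0$, which is perhaps the cleanest presentation.
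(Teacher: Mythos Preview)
Your proposal is correct and follows essentially the same approach as the paper: the paper's proof simply invokes the Choi criterion together with the factorization $\BL=\widehat{A}^*\widetilde{\BH}^T\widehat{A}$ from \eqref{LBL-Hill non min} and the full row rank of $\widehat{A}$, leaving the congruence argument implicit, whereas you spell out both directions explicitly (including the right-inverse / surjectivity argument for the converse).
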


\begin{proof}[\bf Proof]
Since complete positivity of $\cL$ coincides with the positive semidefiniteness of the Choi matrix $\BL$, from the representation of $\BL$ in \eqref{LBL-Hill non min} along with the fact that $\widehat{A}$ has full row rank, the result follows.
\end{proof}

Hence, to verify complete positivity with a non-minimal Hill representation as constructed above, in particular, with \eqref{choice for A_k} and \eqref{choice for B_k}, checking whether the Hill matrix is positive semidefinite is still both a necessary and sufficient condition. With other choices of non-minimal Hill representations, this need not be the case.

\section{Positive maps that are also completely positive} \label{pos imply completely pos}

We start this section with an observation made in \cite{tHvdM21c} which enabled us to determine various classes of $*$-linear matrix maps $\cL$ for which positivity and complete positivity coincide. Recall that $\cL$ is completely positive if and only if the Choi matrix $\BL$ of $\cL$ is positive semidefinite. Whether $\cL$ is positive is not so easy to determine. The following proposition provides a necessary and sufficient criteria in terms of the Choi matrix. The result is essentially contained in Propositions 3.1 and 3.6 of \cite{KMcCSZ19}.

\begin{proposition}\label{P:cL-Pos}
A $*$-linear map $\cL$ as in \eqref{cL} is positive if and only if the Choi matrix $\BL$ in \eqref{Choi} satisfies
\begin{equation}\label{PosCon2}
\left(z\otimes x\right)^* \BL \left(z\otimes x\right) \ge 0 \quad \mbox{for all}\quad x\in\BF^n \ands z \in \BF^q.
\end{equation}
\end{proposition}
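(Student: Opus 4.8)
The plan is to reduce the positivity of $\cL$ — which by definition must be checked against every $V\in\ov{\cP}_q$ — to a condition on rank-one positive semidefinite inputs, and then to rewrite the resulting family of scalar inequalities directly in terms of the block matrix $\BL$. First note that since $\cL$ is $*$-linear it maps $\cH_q$ into $\cH_n$ (indeed $V=V^*$ gives $\cL(V)^*=\cL(V^*)=\cL(V)$), so it is meaningful to ask whether $\cL(V)\ge 0$ for $V\in\ov{\cP}_q$. By the spectral theorem every $V\in\ov{\cP}_q$ can be written as $V=\sum_k z_k z_k^*$ for finitely many $z_k\in\BF^q$ (absorbing the nonnegative eigenvalues into the eigenvectors; when $\BF=\BR$ the $z_k$ are real). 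As a sum of positive semidefinite matrices is positive semidefinite and $\cL$ is linear, $\cL$ is positive if and only if $\cL(zz^*)\ge 0$ for every $z\in\BF^q$; the ``only if'' direction is trivial since $zz^*\in\ov{\cP}_q$.

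The second step is to identify the relevant quadratic form. Using $\cL(V)=\sum_{i,j=1}^q v_{ij}\BL_{ij}$ with $V=[v_{ij}]$ and $\BL_{ij}=\cL(\cE_{ij}^{(q)})$, and writing $z=(z_1,\ldots,z_q)^T\in\BF^q$ so that $zz^*=[z_i\ov{z_j}]_{i,j=1}^q$, we get for any $x\in\BF^n$
\[
x^*\cL(zz^*)x=\sum_{i,j=1}^q z_i\ov{z_j}\,x^*\BL_{ij}x=\sum_{i,j=1}^q(\ov{z_i}\,x)^*\BL_{ij}(\ov{z_j}\,x)=(\ov{z}\otimes x)^*\,\BL\,(\ov{z}\otimes x),
\]
where the last equality just rewrites the double sum via the block decomposition $\BL=[\BL_{ij}]_{i,j=1}^q$, using that $\ov z\otimes x$ has $i$-th block $\ov{z_i}x$. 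Hence $\cL(zz^*)\ge 0$ for all $z\in\BF^q$ if and only if $(\ov z\otimes x)^*\BL(\ov z\otimes x)\ge 0$ for all $z\in\BF^q$ and $x\in\BF^n$; since $z$ ranges over $\BF^q$ exactly when $\ov z$ does, this is precisely condition \eqref{PosCon2}. Combining with the first step gives the claim.

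I do not expect a genuine obstacle: the mathematical content is the standard observation that a (Hermitian-preserving) linear map is positive iff it is nonnegative on rank-one positive semidefinite matrices, together with the elementary conversion of $x^*\cL(zz^*)x$ into a quadratic form in $z\otimes x$. The only point requiring a little care is the complex-conjugate bookkeeping, which over $\BF=\BC$ naturally produces $\ov z\otimes x$ rather than $z\otimes x$; the symmetry $z\leftrightarrow\ov z$ in the quantifier removes this discrepancy, and for $\BF=\BR$ it is vacuous. (Alternatively one may simply invoke Propositions 3.1 and 3.6 of \cite{KMcCSZ19}, of which this is a reformulation.)
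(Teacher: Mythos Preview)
Your proof is correct. The paper does not actually prove this proposition; it simply records that the result is essentially contained in Propositions~3.1 and~3.6 of \cite{KMcCSZ19}, which you also note at the end. Your direct argument --- reducing to rank-one inputs via the spectral decomposition and then identifying $x^*\cL(zz^*)x$ with $(\ov z\otimes x)^*\BL(\ov z\otimes x)$ --- is precisely the standard computation underlying those cited results, with the conjugate bookkeeping handled correctly.
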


Assume a $*$-linear map $\cL$ in \eqref{cL} is given by a minimal Hill representation \eqref{HillRep} with Hill matrix $\BH$ and Choi matrix $\BL$. Define $\widehat{A}$ as in Proposition \ref{P:HillLBL}. From
\[
\left(z\otimes x\right)^* \BL \left(z\otimes x\right)=\left(z\otimes x\right)^* \widehat{A}^*\BH^T \widehat{A} \left(z\otimes x\right)   \quad \mbox{for all}\quad x\in\BF^n \ands z \in \BF^q,
\]
we see that positivity of $\cL$ is equivalent to $y^* \BH^T y\geq 0$ for all $y$ from the set
\begin{equation}\label{y-set}
\fY_{\whatA}:=
\{ \widehat{A}(z \otimes x) \colon x \in \BF^{n}, \, z \in \BF^q \}.
\end{equation}
Complete positivity in turn, by Theorem \ref{T:Hill}, is equivalent to $\BH \geq 0$, which is the same as  $\BH^T \geq 0$. In particular, it follows that positivity and complete positivity of $\cL$ coincide when $\fY_{\whatA}=\BF^m$.

Note that
\[
\widehat{A}(z \otimes x) = \left(\widehat{A}(I_q \otimes x)\right)z = \left(\widehat{A}(z \otimes I_n)\right)x.
\]
From this it is clear that $\fY_{\widehat{A}}=\BF^m$ holds whenever we can find a vector $x \in \BF^n$ such that the matrix $\widehat{A}(I_q \otimes x) \in \BF^{m \times q}$ has full row rank or a vector $z \in \BF^q$ such that the matrix $\widehat{A}(z \otimes I_n) \in \BF^{m \times n}$ has full row rank. Clearly there exists no such vector $x \in \BF^n$ if $m >q$ and no such vector $z \in \BF^q$ is $m >n$. Hence proving that $\fY_{\widehat{A}}=\BF^m$ in this way can only be done if $\rank \BL$ is at most $\max\{n,q\}$. We further point out that whether $\widehat{A}(I_q \otimes x)$ or $\widehat{A}(z \otimes I_n)$ has full row rank is independent of the choice of the minimal Hill representation, since the matrix $\whatA$ is unique up to multiplication on the left by a $T\in \textup{GL}(m,\BF)$; see Theorem 5.9 in \cite{tHvdM21b}. Hence, these are properties of the $*$-linear map $\cL$ and not of a specific minimal Hill representation.

\begin{theorem} \label{property C1}
Let $\cL$ in \eqref{cL} be a $*$-linear map with matricization $L$ and Choi matrix $\BL$. Set $m=\rank \BL$, decompose $L$ as in \eqref{Lblock} and define $\cW$ as in \eqref{AkCond}. Then for any minimal Hill representation \eqref{HillRep}, for $\whatA$ defined as in Proposition \ref{P:HillLBL} there exists a vector $z\in \BF^q$ such that $\widehat{A}(z \otimes I_n)$ has full row-rank if and only if the subspace $\cW$ has the following property:
\begin{align*}
\textup{(C1)} \quad &\text{For any linearly independent } X_1, \ldots, X_k \text{ in } \cW, \text{ there exists a } v \in \BF^q \\ & \qquad \qquad \qquad \qquad \text{ such that } X_1v, \ldots, X_kv \text{ is linearly independent in } \BF^n.
\end{align*}
Hence, if \textup{(C1)} holds, then $\fY_{\widehat{A}}=\BF^m$  and positivity and complete positivity of $\cL$ coincide. Moreover, in that case $\dim \cW\leq n$.
\end{theorem}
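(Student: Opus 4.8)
The plan is to transport everything to a fixed minimal Hill representation \eqref{HillRep} with matrices $A_1,\dots,A_m\in\BF^{n\times q}$ and associated $\whatA$ as in Proposition~\ref{P:HillLBL}; by Theorem~\ref{T:HillA1Am} the $A_k$ form a basis of $\cW$, so $\dim\cW=m$, in agreement with \eqref{mAlt}. First I would compute, via Lemma~\ref{L:MatTrans} with $K_l=A_l$, that the $k$-th row of $\whatA(z\otimes I_n)$ is $z^TA_k^*$, whose transpose is $\ov{A_k}\,z$. Hence $\whatA(z\otimes I_n)\in\BF^{m\times n}$ has full row rank precisely when $\ov{A_1}\,z,\dots,\ov{A_m}\,z$ are linearly independent in $\BF^n$. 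Since entrywise conjugation is a bijection of $\BF^n$ preserving linear independence (from $\sum c_j\ov{v_j}=0$ one gets $\sum\ov{c_j}v_j=0$, valid also for $\BF=\BC$) and $z\mapsto\ov z$ is a bijection of $\BF^q$, writing $w=\ov z$ this condition becomes: there exists $w\in\BF^q$ with $A_1w,\dots,A_mw$ linearly independent.

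The key step is to reinterpret this condition basis-freely. For fixed $w$, consider the evaluation map $\phi_w\colon\cW\to\BF^n$, $X\mapsto Xw$. It sends the basis $A_1,\dots,A_m$ of $\cW$ to $A_1w,\dots,A_mw$, so the latter are linearly independent iff $\phi_w$ is injective, and injectivity does not depend on the chosen basis of $\cW$. Thus, if some $w$ makes $\phi_w$ injective, then for any linearly independent $X_1,\dots,X_k$ in $\cW$, extending to a basis of $\cW$ and applying $\phi_w$ shows $X_1w,\dots,X_kw$ are linearly independent, so $v=w$ witnesses (C1); conversely, (C1) applied to the family $A_1,\dots,A_m$ itself (with $k=m$) produces the desired $w$. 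This yields the stated equivalence, with in fact a single $v$ good for all families. The only delicate point here is the conjugate/transpose bookkeeping forced by allowing $\BF=\BC$; the injectivity reformulation is the actual idea and is short.

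For the remaining assertions: assuming (C1), pick $z$ with $\whatA(z\otimes I_n)$ of full row rank. Then, since $\whatA(z\otimes x)=\bigl(\whatA(z\otimes I_n)\bigr)x$ and a full-row-rank $m\times n$ matrix maps $\BF^n$ onto $\BF^m$, the set $\fY_{\whatA}$ in \eqref{y-set} equals $\BF^m$; by the discussion preceding the theorem, positivity of $\cL$ is then equivalent to $y^*\BH^Ty\ge0$ for all $y\in\BF^m$, i.e.\ to $\BH\ge0$, i.e.\ by Theorem~\ref{T:Hill} to complete positivity of $\cL$. Finally, a full-row-rank matrix in $\BF^{m\times n}$ forces $m\le n$, so $\dim\cW=m\le n$ (equivalently, the injective map $\phi_w\colon\cW\to\BF^n$ already gives this).
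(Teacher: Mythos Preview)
Your proof is correct and follows essentially the same route as the paper's: both reduce, via the identity $\whatA(z\otimes I_n)=(I_m\otimes z)^T\widecheck{A}$ from Lemma~\ref{L:MatTrans}, to the statement that some $w\in\BF^q$ makes $A_1w,\ldots,A_mw$ linearly independent, and then link this to (C1). Your packaging via the evaluation map $\phi_w\colon\cW\to\BF^n$ is a clean touch---it lets you handle the converse direction without invoking the freedom to reselect $A_j=X_j$ as the paper does---but the underlying argument is the same.
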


\begin{proof}[\bf Proof]
We already observed above that the choice of the minimal Hill representation is irrelevant, by \cite[Theorem 5.9]{tHvdM21b}, so we may select a minimal Hill representation \eqref{HillRep} of $\cL$ arbitrarily. Then it remains to show that there exists a vector $z\in \BF^q$ such that $\widehat{A}(z \otimes I_n)$ has full row-rank if and only if (C1) holds.

First assume that (C1) holds. We know from Theorem \ref{T:HillA1Am} that $A_1,\ldots,A_m$ are linearly independent matrices contained in $\cW$. Hence there exists a vector $v\in\BF^q$ so that $A_1v,\ldots,A_mv$ are linearly independent in $\BF^n$. Equivalently, by taking adjoints, the vectors $\ov{v}^T A_1^*,\ldots,\ov{v}^T A_m^*$ are linearly independent in $\BF^{1 \times n}$ which is the same as saying that $(I_m\otimes \ov{v})^T\widecheck{A}$ has full row-rank, with $\widecheck{A}$ defined in the same way as $\widecheck{K}$ in \eqref{MatTrans}. By the first identity in \eqref{KtilKhatRel} in Lemma \ref{L:MatTrans}, $(I_m\otimes \ov{v})^T\widecheck{A}$  having full row rank is the same as $\widehat{A}(\ov{v} \otimes I_n)$ having full row rank, and thus our claim follows.

Conversely, assume there exists a $z\in \BF^q$ such that $\widehat{A}(z \otimes I_n)$ has full row-rank. Let $X_1,\ldots,X_k$ be linearly independent matrices in $\cW$. Then $k\leq m=\rank \BL=\dim \cW$. Without loss of generality we can assume $k=m$, extending $X_1,\ldots,X_k$ to a basis of $\cW$ if needed. Again using the fact that $A_1,\ldots,A_m$ can be chosen arbitrarily, as long as \eqref{AkCond} holds, we may assume that $A_j=X_j$ for $j=1,\ldots,m$. Then, since $\widehat{A}(\ov{v} \otimes I_n)$ for $v=\ov{z}$ has full row-rank, by the analysis given above, which goes two ways, it follows that $A_1v,\ldots,A_mv$ are linearly independent, and hence $X_1v,\ldots,X_kv$ are linearly independent.
\end{proof}

The following result characterizes when there exists a vector $x\in\BF^n$ so that $\widehat{A}(I_q \otimes x)$ has full row-rank. The proof is analogous to the proof of Theorem \ref{property C1}, now relying on the second identity in \eqref{KtilKhatRel} rather than the first, so it will be omitted.

\begin{theorem} \label{property C2}
Let $\cL$ in \eqref{cL} be a $*$-linear map with matricization $L$ and Choi matrix $\BL$. Set $m=\rank \BL$, decompose $L$ as in \eqref{Lblock} and define $\cW$ as in \eqref{AkCond}. Then for any minimal Hill representation \eqref{HillRep}, for $\whatA$ defined as in Proposition \ref{P:HillLBL} there exists a vector $x\in \BF^n$ such that $\widehat{A}(I_q \otimes x)$ has full row-rank if and only if the subspace $\cW$ has the following property:
\begin{align*}
\textup{(C2)} \quad &\text{For any linearly independent } X_1, \ldots, X_k \text{ in } \cW, \text{ there exists a } v \in \BF^n \\ &\qquad \qquad \qquad \qquad \text{ such that }  X_1^*v, \ldots, X_k^*v \text{ is linearly independent in } \BF^q.
\end{align*}
Hence, if \textup{(C2)} holds, then $\fY_{\widehat{A}}=\BF^m$  and positivity and complete positivity of $\cL$ coincide. Moreover, in that case $\dim \cW\leq q$.
\end{theorem}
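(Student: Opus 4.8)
Theorem \ref{property C2} is the mirror image of Theorem \ref{property C1} with the roles of the two sides swapped, so I would structure the proof as a translation of the argument given for Theorem \ref{property C1}, substituting the second identity in \eqref{KtilKhatRel} of Lemma \ref{L:MatTrans} for the first. As in that proof, the first step is to invoke \cite[Theorem 5.9]{tHvdM21b} to note that whether $\widehat{A}(I_q \otimes x)$ has full row rank for some $x$ is independent of the choice of minimal Hill representation, since $\whatA$ is determined up to left multiplication by a $T \in \textup{GL}(m,\BF)$; this lets me fix an arbitrary minimal Hill representation \eqref{HillRep} of $\cL$ and reduces the claim to the equivalence: there exists $x \in \BF^n$ with $\widehat{A}(I_q \otimes x)$ of full row rank if and only if (C2) holds.

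For the forward direction, assuming (C2): by Theorem \ref{T:HillA1Am} the matrices $A_1,\ldots,A_m$ are linearly independent and lie in $\cW$, so (C2) yields a $v \in \BF^n$ such that $A_1^* v,\ldots,A_m^* v$ are linearly independent in $\BF^q$. Taking adjoints, the row vectors $\ov{v}^T A_1,\ldots,\ov{v}^T A_m$ are linearly independent in $\BF^{1 \times q}$, which says precisely that $(I_m \otimes \ov{v})^T \wtilK$-type matrix — here $(I_m \otimes \ov v)^T \wtilA$, with $\wtilA$ defined as $\wtilK$ in \eqref{MatTrans} — has full row rank. By the second identity in \eqref{KtilKhatRel}, $(I_m \otimes x)^T \wtilA = \widehat{A}(I_q \otimes x)$, so with $x = \ov v$ we obtain $\widehat{A}(I_q \otimes x)$ of full row rank. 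For the converse, suppose $\widehat{A}(I_q \otimes x)$ has full row rank for some $x$. Given linearly independent $X_1,\ldots,X_k \in \cW$, we have $k \le m = \dim \cW$, and after extending to a basis we may assume $k = m$; using the freedom in Theorem \ref{T:HillA1Am} to take $A_j = X_j$, the full-row-rank hypothesis and \eqref{KtilKhatRel} run backwards show $X_1^* x,\ldots,X_m^* x$ are linearly independent, giving (C2) with $v = x$.

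The concluding assertions follow quickly: if (C2) holds then $\fY_{\widehat A} = \BF^m$ since $\widehat A (I_q \otimes x)$ surjective forces $\widehat A(z \otimes x)$ to range over all of $\BF^m$ as $z$ varies; then the discussion preceding Theorem \ref{property C1} — positivity of $\cL$ amounts to $y^* \BH^T y \ge 0$ on $\fY_{\widehat A}$, complete positivity to $\BH^T \ge 0$ — gives the coincidence. Finally, $\widehat A(I_q \otimes x) \in \BF^{m \times q}$ can have full row rank only if $m \le q$, i.e. $\dim \cW \le q$.

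**Main obstacle.** There is essentially no serious obstacle here, which is exactly why the authors omitted the proof: the entire argument is a transcription of the proof of Theorem \ref{property C1} under the symmetry exchanging $(z,\BF^q)$ with $(x,\BF^n)$ and the first identity of \eqref{KtilKhatRel} with the second. The only point requiring a moment of care is bookkeeping the adjoints and complex conjugates correctly — tracking that passing from "$A_j^* v$ independent" to a full-row-rank statement about $\widehat A (I_q \otimes x)$ needs $x = \ov v$ (not $x = v$), and checking that Lemma \ref{L:MatTrans}'s second identity $\widehat K(I_q \otimes x) = (I_m \otimes x)^T \wtilK$ applies with $\wtilA$ built from $\ov{A}_l$. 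Since all of this is routine given the lemmas already in place, the proof is genuinely safe to omit, and a reader can reconstruct it mechanically from the template of Theorem \ref{property C1}.
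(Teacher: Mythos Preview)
Your approach is precisely what the paper intends: it explicitly states that the proof is ``analogous to the proof of Theorem \ref{property C1}, now relying on the second identity in \eqref{KtilKhatRel} rather than the first,'' and your outline follows that template faithfully, including the reduction via \cite[Theorem 5.9]{tHvdM21b}, the use of Theorem \ref{T:HillA1Am}, and the extension-to-a-basis trick for the converse.

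There is one small bookkeeping slip of exactly the type you warn about. From $A_j^* v$ linearly independent you should take \emph{transposes}, not adjoints: $(A_j^* v)^T = v^T \ov{A}_j$, and these are precisely the rows of $(I_m \otimes v)^T \wtilA$, since $\wtilA$ is built from the $\ov{A}_j$. Hence by the second identity in \eqref{KtilKhatRel} the correct choice is $x = v$, not $x = \ov v$. (Your converse direction already has it right with $v = x$, which is consistent with $x = v$ but not with $x = \ov v$.) With that correction the argument goes through verbatim.
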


Although $\cW$ in \eqref{AkCond} is the minimal subspace one can work with, sometimes it is convenient to identify a larger subspace $\cV$ so that $L \in \ov{\cV}\otimes \cV$, since $\cV$ may better capture the structural properties of $L$. In this case it is still sufficient that $\cV$ satisfies (C1) or (C2) to conclude that $\fY_{\widehat{A}}=\BF^m$, but possibly not necessary. This claim will be proved in Theorem \ref{T:Lin} below.

\subsection{Subspaces of $\BF^{n \times q}$ satisfying condition (C1) or (C2)}

Since subspaces $\cW$ of $\BF^{n \times q}$ satisfying (C1) or (C2) are of particular importance to us, we further investigate such subspaces in this subsection. We start with a duality result between (C1) and (C2).

\begin{lemma} \label{C1C2 duality}
A subspace $\cW$ of $\BF^{n \times q}$ satisfies condition \tu{(C1)} if and only if $\cW^*$ satisfies \tu{(C2)}.
\end{lemma}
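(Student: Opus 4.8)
The plan is to unwind the two conditions and observe that the adjoint map transports one literally into the other, so that the lemma is essentially a tautology once the bookkeeping is done. First I would record two elementary facts about the map $X\mapsto X^*$ from $\BF^{n\times q}$ to $\BF^{q\times n}$: it is a bijection carrying $\cW$ onto $\cW^*$ with $(\cW^*)^*=\cW$, and it preserves linear independence (over $\BC$ it is only conjugate-linear, but $\sum_i c_iX_i^*=0$ is equivalent to $\sum_i\ov{c_i}X_i=0$, and $c_i=0$ iff $\ov{c_i}=0$). Consequently $X_1,\dots,X_k$ are linearly independent in $\cW$ if and only if $X_1^*,\dots,X_k^*$ are linearly independent in $\cW^*$.

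Next I would spell out condition (C2) for the subspace $\cW^*\subseteq\BF^{q\times n}$: since in the template for (C2) the ambient matrix size is $n\times q$, replacing it by $q\times n$ makes the test vector live in $\BF^q$ and the images live in $\BF^n$. Thus (C2) for $\cW^*$ reads: for every linearly independent $Y_1,\dots,Y_k\in\cW^*$ there is $v\in\BF^q$ with $Y_1^*v,\dots,Y_k^*v$ linearly independent in $\BF^n$. Writing $Y_i=X_i^*$ with $X_i\in\cW$, and using $Y_i^*=X_i$ together with the linear-independence correspondence from the previous paragraph, this becomes: for every linearly independent $X_1,\dots,X_k\in\cW$ there is $v\in\BF^q$ with $X_1v,\dots,X_kv$ linearly independent in $\BF^n$. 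That is precisely (C1) for $\cW$. Since every step is an equivalence, both implications are obtained at once.

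There is no real obstacle here; the only thing that needs attention is keeping straight, after replacing $\cW$ by $\cW^*$, which of $\BF^n$ and $\BF^q$ the test vector and the output vectors in (C2) belong to — a point that is easy to get backwards but causes no genuine difficulty. If desired, one could alternatively phrase the argument so that it also yields the companion statement ``$\cW$ satisfies (C2) if and only if $\cW^*$ satisfies (C1)'', which follows by the identical computation with the roles of the two factors exchanged.
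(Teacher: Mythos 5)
Your proposal is correct and takes essentially the same route as the paper: unwind (C2) for $\cW^*\subseteq\BF^{q\times n}$ (with the roles of $n$ and $q$ swapped, so the test vector lies in $\BF^q$ and the images in $\BF^n$), use that $X\mapsto X^*$ preserves linear independence, and observe the condition becomes exactly (C1) for $\cW$. The only cosmetic difference is that you phrase it as a chain of equivalences while the paper proves one direction and notes the converse is symmetric.
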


\begin{proof}[\bf Proof]
Assume $\cW$ satisfies property (C1). Let $Y_1,\ldots,Y_k\in \cW^*$ be linearly independent. Then $Y_1^*,\ldots, Y_k^*$ are in $\cW$ and are also linearly independent. Hence there exists a vector $v\in \BF^q$ so that $Y_1^*v,\ldots, Y_k^*v$ are linearly independent in $\BF^n$. This shows that $\cW^*$ satisfies (C2). The converse implication follows by a similar argument.
\end{proof}

Because of this duality relation between properties (C1) and (C2) we shall prove our results only for property (C1) and list the analogous results for property (C2), in Lemma \ref{L: (C2) props} below.

\begin{lemma}\label{complex con C1}
A subspace $\cW$ of $\BF^{n \times q}$ satisfies condition \textup{(C1)} if and only if $\ov{\cW}$ satisfies \tu{(C1)}.
\end{lemma}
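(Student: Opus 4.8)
The plan is to exploit that entrywise complex conjugation is an involution, so that $\ov{\ov{\cW}}=\cW$; hence it suffices to prove one implication, say that (C1) for $\cW$ implies (C1) for $\ov{\cW}$, and then the converse follows by applying the same implication with $\cW$ replaced by $\ov{\cW}$. When $\BF=\BR$ there is nothing to prove, so the only real content is the case $\BF=\BC$.

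First I would record the elementary observation that complex conjugation, viewed as a map $\BF^{n\times q}\to\BF^{n\times q}$ (and likewise on $\BF^q$ and on $\BF^n$), is a conjugate-linear bijection, and therefore carries any linearly independent family to a linearly independent family: if $\sum_i c_i\ov{Z_i}=0$ then $\sum_i\ov{c_i}Z_i=0$, which forces all $c_i=0$. I would also use the entrywise multiplicativity $\ov{AB}=\ov{A}\,\ov{B}$.

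Now assume $\cW$ satisfies (C1) and let $Y_1,\ldots,Y_k\in\ov{\cW}$ be linearly independent. By the observation above, $\ov{Y_1},\ldots,\ov{Y_k}\in\cW$ are linearly independent, so (C1) for $\cW$ produces a vector $w\in\BF^q$ with $\ov{Y_1}w,\ldots,\ov{Y_k}w$ linearly independent in $\BF^n$. Setting $v=\ov{w}\in\BF^q$, we have $Y_iv=Y_i\ov{w}=\ov{\ov{Y_i}\,w}$ for each $i$, so $Y_1v,\ldots,Y_kv$ are the conjugates of a linearly independent family and hence linearly independent; thus $\ov{\cW}$ satisfies (C1), and the converse is obtained by symmetry.

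There is no genuine obstacle here; the only point that warrants a moment's care is that over $\BC$ conjugation is conjugate-linear rather than linear, but since it remains a bijection it still preserves linear (in)dependence, which is all the argument needs.
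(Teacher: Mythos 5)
Your proposal is correct and follows essentially the same route as the paper: both arguments rest on the facts that entrywise conjugation preserves linear independence and that $\ov{X v}=\ov{X}\,\ov{v}$, so a witness vector for the conjugated family is obtained by conjugating the witness for the original family. The paper simply states these two observations and concludes, while you spell out the reduction to one implication via the involution $\ov{\ov{\cW}}=\cW$; the content is identical.
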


\begin{proof}[\bf Proof]
Since linearly independence of $X_1, \ldots, X_k$ is equivalent to $\ov{X}_1, \ldots, \ov{X}_k$ being linearly independent, and the same is true for $X_1v, \ldots, X_kv$ and $\ov{X}_1\ov{v}, \ldots, \ov{X}_k\ov{v}$, the result follows.
\end{proof}

\begin{lemma} \label{similar alg}
Suppose $\cW$ is a subspace of $\BF^{n \times q}$ that satisfies the condition \tu{(C1)}. Then $P \cW Q$ also satisfies \tu{(C1)} for any $P \in \textup{GL}(n,\BF)$ and $Q \in \textup{GL}(q,\BF)$.
\end{lemma}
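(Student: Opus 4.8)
The plan is to deduce condition (C1) for $P\cW Q$ directly from condition (C1) for $\cW$, by transporting both the given linearly independent family of matrices and the witness vector through the invertible maps $P$ and $Q$. First I would take an arbitrary linearly independent family $Y_1,\ldots,Y_k$ in $P\cW Q$ and write $Y_j=PX_jQ$ with $X_j\in\cW$. Since left multiplication by $P$ and right multiplication by $Q$ are invertible linear operations on $\BF^{n\times q}$, the map $X\mapsto PXQ$ is a linear isomorphism of $\BF^{n\times q}$; consequently $X_1,\ldots,X_k$ are themselves linearly independent in $\cW$.

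Next I would apply the hypothesis (C1) to the matrices $X_1,\ldots,X_k$: there exists $v\in\BF^q$ such that $X_1v,\ldots,X_kv$ are linearly independent in $\BF^n$. Setting $w:=Q^{-1}v\in\BF^q$, we compute $Y_jw=PX_jQQ^{-1}v=P(X_jv)$ for each $j$. Because $P\in\textup{GL}(n,\BF)$ and $X_1v,\ldots,X_kv$ are linearly independent, their images $P(X_1v),\ldots,P(X_kv)$ are again linearly independent in $\BF^n$. Thus $w$ is the required witness vector for $Y_1,\ldots,Y_k$, which establishes that $P\cW Q$ satisfies (C1).

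There is no real obstacle here; this is a purely linear-algebraic bookkeeping argument requiring none of the Hill-representation or Choi-matrix machinery. The only point to be careful about is to take the witness vector to be $Q^{-1}v$ (rather than $Qv$) so that the factors of $Q$ cancel, and to note that invertibility is used twice: once to pull the linear independence of the matrices back from $P\cW Q$ to $\cW$, and once to push the linear independence of the vectors $X_jv$ forward through $P$. If desired, one could instead record this as an immediate consequence of the fact that $\cW\mapsto P\cW Q$ and its inverse $\cW\mapsto P^{-1}\cW Q^{-1}$ are mutually inverse bijections preserving the relevant linear-independence data, but the explicit argument above is just as short.
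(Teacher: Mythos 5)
Your proof is correct and is essentially the same argument as the paper's: pull the linear independence of the matrices back through the isomorphism $X\mapsto PXQ$, apply (C1) in $\cW$, take the witness vector $Q^{-1}v$, and push the independence of the vectors forward through the invertible $P$. No differences of substance.
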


\begin{proof}[\bf Proof]
Take any linearly independent $X_1, \ldots, X_k$ in $P\cW Q$. Then $X_i=PY_iQ$, where $Y_1, \ldots, Y_k$ are linearly independent matrices in $\cW$. Hence there exists a vector $v \in \BF^q$ such that $Y_1v, \ldots, Y_kv$ are linearly independent vectors in $\BF^n$. Define $u:=Q^{-1}v \in \BF^q$ and note that for all $\al_1,\ldots,\al_k\in\BF$ we have
\[
P\left(\sum_{i=1}^k\alpha_iY_iv\right)=\sum_{i=1}^k\alpha_iPY_iv=\sum_{i=1}^k\alpha_iX_iQ^{-1}v=\sum_{i=1}^k \alpha_iX_iu.
\]
The invertibility of $P$ together with the linear independence of $Y_1v, \ldots, Y_kv$ implies that $X_1u, \ldots, X_ku$ are linearly independent. Hence $P \cW Q$ satisfies (C1).
\end{proof}

\begin{lemma} \label{sub alg}
Suppose $\cW$ is subspace of $\BF^{n \times q}$ that satisfies condition \tu{(C1)}. Then any subspace $\cV$ of $\cW$ also satisfies \tu{(C1)}.
\end{lemma}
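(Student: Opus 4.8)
The plan is to observe that condition (C1) is a statement quantified over all linearly independent tuples living in the ambient subspace, and that passing to a smaller subspace only shrinks the collection of tuples one must handle. So the inclusion $\cV \subseteq \cW$ should make the implication immediate, with no new construction required.

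Concretely, I would start by taking arbitrary linearly independent matrices $X_1, \ldots, X_k$ in $\cV$. Since $\cV$ is a subspace of $\cW$, these same matrices lie in $\cW$, and linear independence is unaffected by enlarging the ambient space, so $X_1, \ldots, X_k$ are linearly independent in $\cW$ as well. Now invoke property (C1) for $\cW$: there exists a vector $v \in \BF^q$ such that $X_1 v, \ldots, X_k v$ are linearly independent in $\BF^n$. Since $X_1, \ldots, X_k$ were an arbitrary linearly independent tuple in $\cV$, this is precisely the assertion that $\cV$ satisfies (C1).

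There is no real obstacle here; the only thing to be careful about is the (entirely routine) point that linear independence of a finite tuple of matrices does not depend on whether one views them inside $\cV$, inside $\cW$, or inside all of $\BF^{n \times q}$, since it is just the statement that the only vanishing linear combination is the trivial one. One could also note in passing that this lemma is consistent with — and in fact recovers a special case of — Lemma \ref{similar alg} is not needed; the argument stands on its own. If desired, the analogous statement for condition (C2) follows either by the same one-line argument or by combining this lemma with the duality of Lemma \ref{C1C2 duality}, using that $\cV \subseteq \cW$ implies $\cV^* \subseteq \cW^*$.
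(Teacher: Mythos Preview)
Your argument is correct and is exactly the paper's approach: any linearly independent tuple in $\cV$ is automatically a linearly independent tuple in $\cW$, so the witness vector $v$ furnished by (C1) for $\cW$ works verbatim. The paper states this in one line; your version just unpacks it a bit more (and your penultimate sentence about Lemma~\ref{similar alg} is garbled and should be cut, but it is only commentary).
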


\begin{proof}[\bf Proof]
The claim follows because any linearly independent $X_1, \ldots, X_k$ in $\cV$ are also linearly independent matrices in $\cW$.
\end{proof}

\begin{lemma}\label{direct sum of alg}
Suppose $\cW_1$ is a subspace of $\BF^{n_1 \times q_1}$ and $\cW_2$ is a subspace of $\BF^{n_2 \times q_2}$, that both satisfy the condition \tu{(C1)}. Then $\cW_1 \oplus \cW_2$ in $\BF^{n_1+n_2 \times q_1+q_2}$ also satisfies the condition \tu{(C1)}.
\end{lemma}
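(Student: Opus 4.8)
The plan is to reduce to the subspaces actually spanned by the matrices at hand and to reformulate \textup{(C1)} as an injectivity statement about evaluation at a fixed column vector; this is exactly what lets the two diagonal blocks be treated independently.

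First I would take linearly independent $X_1, \ldots, X_k$ in $\cW_1 \oplus \cW_2$ and write each as a block diagonal matrix $X_i = \diag(Y_i, Z_i)$ with $Y_i \in \cW_1$ and $Z_i \in \cW_2$. Put $\cV_1 := \textup{span}\{Y_1, \ldots, Y_k\} \subseteq \cW_1$ and $\cV_2 := \textup{span}\{Z_1, \ldots, Z_k\} \subseteq \cW_2$; by Lemma \ref{sub alg} both $\cV_1$ and $\cV_2$ again satisfy \textup{(C1)}. Note that on their own the $Y_i$ (resp.\ the $Z_i$) need not be linearly independent, which is why one passes to the spans.

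The key step is then to apply \textup{(C1)} to a basis of $\cV_1$: this yields a vector $v_1 \in \BF^{q_1}$ such that the linear map $\cV_1 \to \BF^{n_1}$, $W \mapsto Wv_1$, carries a basis to a linearly independent set and is therefore injective. Likewise obtain $v_2 \in \BF^{q_2}$ for which $W \mapsto Wv_2$ is injective on $\cV_2$. (If $\cV_1 = \{0\}$ or $\cV_2 = \{0\}$ the corresponding condition is vacuous and one picks that $v_j$ arbitrarily.) Setting $v := \mat{v_1 \\ v_2} \in \BF^{q_1+q_2}$ one has $X_i v = \mat{Y_i v_1 \\ Z_i v_2}$.

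Finally I would verify that $X_1 v, \ldots, X_k v$ are linearly independent: from $\sum_{i=1}^k \alpha_i X_i v = 0$ one reads off $\sum_i \alpha_i Y_i v_1 = 0$ and $\sum_i \alpha_i Z_i v_2 = 0$; injectivity of the two evaluation maps forces $\sum_i \alpha_i Y_i = 0$ and $\sum_i \alpha_i Z_i = 0$, hence $\sum_i \alpha_i X_i = 0$, and the linear independence of $X_1, \ldots, X_k$ gives $\alpha_1 = \cdots = \alpha_k = 0$. This shows $\cW_1 \oplus \cW_2$ satisfies \textup{(C1)}. I do not anticipate a genuine obstacle here; the only subtlety worth stating carefully is the "evaluation map is injective on the span" reformulation of \textup{(C1)}, which is precisely what decouples the two blocks even though the $Y_i$ and the $Z_i$ may individually be linearly dependent.
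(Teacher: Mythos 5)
Your proof is correct, and it takes a genuinely simpler route than the paper. The paper's argument reorders the $X_i$ so that $Y_1,\ldots,Y_{k_1}$ form a basis of the span of the first blocks, then performs an explicit column-operation with an invertible matrix $\widehat{U}\otimes I_q$ to zero out the remaining first blocks, deduces that the resulting second blocks $\widehat{Z}_1,\ldots,\widehat{Z}_{k_2}$ are linearly independent in $\cW_2$, and finishes with a block-rank computation for $X(I_k\otimes\widetilde{u})$. You bypass all of that by passing to the spans $\cV_1,\cV_2$ and reformulating \textup{(C1)} as the existence of a vector at which the evaluation map is injective on a given span: applying \textup{(C1)} to a basis of $\cV_j$ (these are linearly independent elements of $\cW_j$, so you do not even need Lemma \ref{sub alg}, though invoking it is harmless) gives $v_j$ with $W\mapsto Wv_j$ injective on $\cV_j$, and then the block-diagonal structure decouples the two coordinates of $v=\sbm{v_1\\ v_2}$, so that $\sum_i\alpha_iX_iv=0$ forces $\sum_i\alpha_iY_i=0$ and $\sum_i\alpha_iZ_i=0$, hence $\sum_i\alpha_iX_i=0$ and all $\alpha_i=0$. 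Your observation that the $Y_i$ (or $Z_i$) need not individually be linearly independent, and that passing to spans handles this, is exactly the point the paper's elimination step is designed to address; your injectivity reformulation achieves the same thing with no matrix manipulations, at the small cost of the (correctly noted) degenerate case $\cV_j=\{0\}$, which is vacuous. Both arguments are valid; yours is shorter and arguably more transparent, while the paper's is a concrete constructive reduction to linearly independent block entries.
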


\begin{proof}[\bf Proof]
Take any linearly independent $X_1, \ldots, X_k$ in $\cW_1 \oplus \cW_2$, where $X_i=Y_i\oplus Z_i$ with $Y_i \in \cW_1$ and $Z_i \in \cW_2$ for $i=1,\ldots,k$. Write $k_1+k_2=k$ with $k_1=\textup{dim span}\{Y_1, \ldots, Y_k\}$. Reorder the set $X_1,\ldots,X_k$ so that $Y_1, \ldots, Y_{k_1}$ are linearly independent in $\cW_1$ and hence we can write $Y_i=\sum_{j=1}^{k_1} \alpha_j^{i}Y_j$ with $\alpha_{j}^i \in \mathbb{F}$ for all $i$. Let $n:=n_1+n_2$, $q:=q_1+q_2$ and \[ X:=\begin{bmatrix} X_1 & \hdots & X_k\end{bmatrix} \in \BF^{n\times kq}.\] 
Decompose $X$ as \begin{align*}
    X&=\begin{bmatrix}
    Y & \wtil{Y} \\ Z & \wtil{Z}
    \end{bmatrix} \quad \text{where} \quad Y:= \begin{bmatrix}
    Y_1 & 0_{n_1 \times q_2} & \hdots & Y_{k_1} & 0_{n_1 \times q_2}
    \end{bmatrix} \in \BF^{n_1 \times k_1q}, \\ \wtil{Y}&:= \begin{bmatrix}
    Y_{k_1+1} & 0_{n_1 \times q_2} & \hdots & Y_{k} & 0_{n_1 \times q_2}
    \end{bmatrix} \in \BF^{n_1 \times k_2q}, \\ Z&:= \begin{bmatrix}
    0_{n_2 \times q_1} & Z_1  & \hdots & 0_{n_2 \times q_1} & Z_{k_1}
    \end{bmatrix} \in \BF^{n_2 \times k_1q} \ands \\ \wtil{Z}&:= \begin{bmatrix}
    0_{n_2 \times q_1} & Z_{k_1+1} & \hdots & 0_{n_2 \times q_1} & Z_k
    \end{bmatrix} \in \BF^{n_2 \times k_2q}.
\end{align*} Define \begin{align*}
    T&:= -\begin{bmatrix}
    \alpha_1^{k_1+1} & \hdots & \alpha_1^k  \\ \vdots &   \ddots & \vdots  \\ \alpha_{k_1}^{k_1+1}  & \hdots & \alpha_{k_1}^k
    \end{bmatrix} \in \BF^{k_1 \times k_2}, \quad \widehat{U}:=\begin{bmatrix}
    I_{k_1} & T \\ 0_{k_2 \times k_1} & I_{k_2}
    \end{bmatrix} \in \BF^{k \times k}
\end{align*}  and \[U:=\left(\widehat{U} \otimes I_q\right) \in \BF^{kq \times kq}.\]
Then \begin{align*}
    XU&=\begin{bmatrix}
    Y & \wtil{Y} \\ Z & \wtil{Z}
    \end{bmatrix}\begin{bmatrix}
    I_{k_1q} & T \otimes I_q \\ 0_{k_2q \times k_1q} & I_{k_2q}
    \end{bmatrix} = \begin{bmatrix}
    Y & Y\left(T \otimes I_q\right) + \wtil{Y} \\ Z & Z\left(T \otimes I_q\right)+\wtil{Z}
    \end{bmatrix}=\begin{bmatrix}
    Y & 0_{n_1 \times k_2q} \\ Z & \what{Z}
    \end{bmatrix},
\end{align*}  where  \[\widehat{Z}:=Z\left(T \otimes I_q\right)+\widetilde{Z}=\begin{bmatrix}0_{n_2 \times q_1} & \widehat{Z}_1 & \hdots & 0_{n_2 \times q_1}& \widehat{Z}_{k_2} \end{bmatrix}\] with $\widehat{Z}_l \in \cW_2$ for $l=1, \ldots, k_2.$ Indeed, the $0$ entry in the right upper corner follows since \begin{align*} -Y\left(T\otimes I_q\right)=Y\left(-T \otimes I_q\right)&=\begin{bmatrix}
    Y_1 & 0_{n_1 \times q_2} & \hdots & Y_{k_1} & 0_{n_1 \times q_2}
    \end{bmatrix} \\ &\qquad \times \begin{bmatrix}
    \alpha_1^{k_1+1}I_{q_1} & 0_{q_1 \times q_2} & \hdots & \alpha_1^kI_{q_1} & 0_{q_1 \times q_2} \\ 0_{q_2 \times q_1} &\alpha_1^{k_1+1}I_{q_2}  & \hdots & 0_{q_2 \times q_1} & \alpha_1^{k}I_{q_2}  \\ \vdots & \vdots & \ddots & \vdots & \vdots \\ \alpha_{k_1}^{k_1+1}I_{q_1} & 0_{q_1 \times q_2} & \hdots & \alpha_{k_1}^kI_{q_1} & 0_{q_1 \times q_2}\\0_{q_2 \times q_1} & \alpha_{k_1}^{k_1+1}I_{q_2} & \hdots & 0_{q_2 \times q_1} & \alpha_{k_1}^{k}I_{q_2}
    \end{bmatrix} \\ &=\begin{bmatrix}
    \sum_{j=1}^{k_1}\alpha_j^{k_1+1}Y_j & 0_{n_1 \times q_2} & \hdots & \sum_{j=1}^{k_1}\alpha_{j}^{k}Y_{j} & 0_{n_1 \times q_2}
    \end{bmatrix} \\ &= \begin{bmatrix}
    Y_{k_1+1} & 0_{n_1 \times q_2} & \hdots & Y_{k} & 0_{n_1 \times q_2}
    \end{bmatrix} =\wtil{Y}.\end{align*} Let $\widetilde{X}:=XU,$ with $\widetilde{X}$ decomposed as $\widetilde{X}=\begin{bmatrix}\widetilde{X}_1 & \hdots & \widetilde{X}_k \end{bmatrix}$. Since $U$ is invertible and $X_1, \ldots, X_k$ are linearly independent, it follows that $\widetilde{X}_1, \ldots, \widetilde{X}_k$ are linearly independent. This can only be the case if $\widehat{Z}_1, \ldots, \widehat{Z}_{k_2}$ are linearly independent in $\cW_2$.  
 Now we make use of the fact that there exists a vector $v \in \BF^{q_1}$ and $w \in \BF^{q_2}$ such that $Y_1v,\ldots,Y_{k_1}v$ form a linearly independent set in $\BF^{n_1}$ and $\widehat{Z}_{1}w,\ldots,\widehat{Z}_{k_2}w$ form a linearly independent set in $\BF^{n_2}.$ Let $\widetilde{u}:=\begin{bmatrix} v^T &  w^T\end{bmatrix}^T \in \BF^{q}$. Then \begin{align*}\widetilde{X}\left(I_{k} \otimes \widetilde{u}\right) &=\begin{bmatrix}
 Y & 0_{n_1 \times k_2q} \\ Z & \what{Z}
 \end{bmatrix}\left(I_k \otimes \widetilde{u}\right)=\begin{bmatrix}
 Y & 0_{n_1 \times k_2q} \\ Z & \what{Z}
    \end{bmatrix}\begin{bmatrix}I_{k_1} \otimes \widetilde{u} & 0_{k_1q \times k_2} \\ 0_{k_2q \times k_1} & I_{k_2} \otimes \widetilde{u} \end{bmatrix}\\&=\begin{bmatrix}
    Y\left(I_{k_1} \otimes \widetilde{u}\right) & 0_{n_1 \times k_2}\\ Z\left(I_{k_1} \otimes \widetilde{u}\right) & \what{Z}\left(I_{k_2} \otimes \widetilde{u}\right)
    \end{bmatrix} \\ &= \begin{bmatrix} \begin{bmatrix} Y_1v & \hdots & Y_{k_1}v \end{bmatrix} & 0_{n_1 \times k_2} \\ \begin{bmatrix} Z_1w & \hdots & Z_{k_1}w \end{bmatrix} & \begin{bmatrix} \widehat{Z}_1w & \hdots & \widehat{Z}_{k_2}w \end{bmatrix} \end{bmatrix}.\end{align*} Hence there exists a vector $\widetilde{u} \in \BF^q$ such that \[\rank \widetilde{X}\left(I_k \otimes \widetilde{u}\right) = k,\] since \begin{align*}\rank \widetilde{X}\left(I_k \otimes \widetilde{u}\right)&=\rank \begin{bmatrix} Y_1v & \hdots & Y_{k_1}v \end{bmatrix}+\rank\begin{bmatrix} \widehat{Z}_1w & \hdots & \widehat{Z}_{k_2}w \end{bmatrix} \\&=k_1+k_2=k.\end{align*}
    Now from \[\widetilde{X}\left(I_k \otimes \widetilde{u}\right)=XU\left(I_k \otimes \widetilde{u}\right)=X\left(\widehat{U} \otimes I_q\right)\left(I_k \otimes \widetilde{u}\right)=X\left(\widehat{U} \otimes \widetilde{u}\right)=X\left(I_k \otimes \widetilde{u}\right)\widehat{U},\] and the fact that $\widehat{U}$ is invertible it follows that
    \[\rank X\left(I_k \otimes \widetilde{u}\right)=\rank \begin{bmatrix}X_1\widetilde{u} & \hdots & X_k\widetilde{u} \end{bmatrix} =k.
\]
This proves that there exists a vector $\widetilde{u}$ such that $X_1\widetilde{u}, \ldots, X_k\widetilde{u}$ are linearly independent in $\BF^n$. Hence $\cW_1 \oplus \cW_2$ satisfy $(C1)$ whenever both $\cW_1$ and $\cW_2$ satisfy $(C1).$
\end{proof}

Via the duality relation between properties (C1) and (C2) obtained in Lemma \ref{C1C2 duality}, the following result follows directly from Lemmas \ref{similar alg} -- \ref{direct sum of alg}.

\begin{lemma}\label{L: (C2) props}
Let $\cW$ be a subspace of $\BF^{n \times q}$ that satisfies the condition \tu{(C2)}. Then:
\begin{itemize}
\item[(i)] $\ov{\cW}$ also satisfies \tu{(C2)};

\item[(ii)] $P \cW Q$ also satisfies \tu{(C2)} for any $P \in \textup{GL}(n,\BF)$ and $Q \in \textup{GL}(q,\BF)$;

\item[(iii)] any subspace $\cV$ of $\cW$ satisfies \tu{(C2)};

\item[(iv)] if $\cZ$ is a subspace of $\BF^{n' \times q'}$ which satisfies \tu{(C2)}, then $\cW\oplus \cZ$ is a subspace in $\BF^{(n+ n') \times (q+q')}$ that satisfies \tu{(C2)}.

\end{itemize}
\end{lemma}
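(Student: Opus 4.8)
The plan is to derive all four items from the corresponding statements for property \tu{(C1)}, namely Lemmas \ref{complex con C1}, \ref{similar alg}, \ref{sub alg} and \ref{direct sum of alg}, by transporting them across the involution $\Lambda\mapsto\Lambda^*$ using the duality of Lemma \ref{C1C2 duality}, which says a subspace satisfies \tu{(C2)} if and only if its adjoint satisfies \tu{(C1)}. The only points requiring (very mild) care are the elementary identities describing how $\Lambda\mapsto\Lambda^*$ interacts with entrywise conjugation, two-sided multiplication by invertibles, passage to subspaces, and direct sums; namely $(PWQ)^*=Q^*W^*P^*$, $\ov{\cW^*}=(\ov{\cW})^*$ (since $\ov{A^*}=A^T=(\ov A)^*$ entrywise), $\cV\subseteq\cW\Rightarrow\cV^*\subseteq\cW^*$, and $(\cW\oplus\cZ)^*=\cW^*\oplus\cZ^*$.

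First I would prove (i): if $\cW$ satisfies \tu{(C2)}, then $\cW^*$ satisfies \tu{(C1)} by Lemma \ref{C1C2 duality}; Lemma \ref{complex con C1} then gives that $\ov{\cW^*}$ satisfies \tu{(C1)}, and since $\ov{\cW^*}=(\ov{\cW})^*$, a second application of Lemma \ref{C1C2 duality} shows $\ov{\cW}$ satisfies \tu{(C2)}. For (ii), with $\cW\subseteq\BF^{n\times q}$, $P\in\textup{GL}(n,\BF)$, $Q\in\textup{GL}(q,\BF)$, one has $(P\cW Q)^*=Q^*\cW^*P^*$ with $\cW^*\subseteq\BF^{q\times n}$, $Q^*\in\textup{GL}(q,\BF)$, $P^*\in\textup{GL}(n,\BF)$; since $\cW^*$ satisfies \tu{(C1)}, Lemma \ref{similar alg} yields that $Q^*\cW^*P^*$ satisfies \tu{(C1)}, hence $P\cW Q$ satisfies \tu{(C2)} by Lemma \ref{C1C2 duality}. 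For (iii), $\cV\subseteq\cW$ gives $\cV^*\subseteq\cW^*$; since $\cW^*$ satisfies \tu{(C1)}, so does $\cV^*$ by Lemma \ref{sub alg}, and therefore $\cV$ satisfies \tu{(C2)}. For (iv), $(\cW\oplus\cZ)^*=\cW^*\oplus\cZ^*$ with both summands satisfying \tu{(C1)}, so $\cW^*\oplus\cZ^*$ satisfies \tu{(C1)} by Lemma \ref{direct sum of alg}, whence $\cW\oplus\cZ$ satisfies \tu{(C2)}.

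There is essentially no real obstacle here: the whole argument is a bookkeeping exercise transporting Lemmas \ref{complex con C1}--\ref{direct sum of alg} through the order-reversing involution $\Lambda\mapsto\Lambda^*$, the only genuine content being the duality already established in Lemma \ref{C1C2 duality}. The one spot worth stating explicitly, to avoid a sign-of-conjugation slip, is the identity $\ov{\cW^*}=(\ov{\cW})^*$ used in part (i).
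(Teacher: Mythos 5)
Your proposal is correct and is exactly the paper's route: the paper states that Lemma \ref{L: (C2) props} ``follows directly'' from the (C1) results via the duality of Lemma \ref{C1C2 duality}, and you have simply written out the bookkeeping identities $(P\cW Q)^*=Q^*\cW^*P^*$, $\ov{\cW^*}=(\ov{\cW})^*$ and $(\cW\oplus\cZ)^*=\cW^*\oplus\cZ^*$ that make the transport explicit. The only remark worth adding is that item (i) indeed requires Lemma \ref{complex con C1} in addition to Lemmas \ref{similar alg}--\ref{direct sum of alg}, as your argument correctly shows.
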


Combining the results of Lemmas \ref{C1C2 duality} and \ref{complex con C1} gives the another duality between conditions (C1) and (C2).

\begin{corollary}\label{duality transpose}
The subspace $\cW$ of $\BF^{n \times q}$ satisfies condition $(C1)$ if and only if $\cW^T$ satisfies $(C2)$.
\end{corollary}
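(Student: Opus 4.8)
The plan is to chain together the two equivalences already established, using only the elementary identity $\ov{X}^{\,*}=X^T$ for matrices (equivalently $\ov{\cW}^{\,*}=\cW^T$ for subspaces), which holds entrywise since $\ov{X}^{\,*}=\ov{\ov X}^{\,T}=X^T$.

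First I would invoke Lemma \ref{complex con C1}: the subspace $\cW$ satisfies (C1) if and only if $\ov{\cW}$ satisfies (C1). Next I would apply Lemma \ref{C1C2 duality} to the subspace $\ov{\cW}$ in place of $\cW$: the subspace $\ov{\cW}$ satisfies (C1) if and only if $(\ov{\cW})^{*}$ satisfies (C2). Finally, since $(\ov{\cW})^{*}=\cW^{T}$, putting these together gives that $\cW$ satisfies (C1) if and only if $\cW^{T}$ satisfies (C2), which is the claim. (Alternatively one could run the chain in the other order — first Lemma \ref{C1C2 duality} to pass from $\cW$ satisfying (C1) to $\cW^{*}$ satisfying (C2), then Lemma \ref{L: (C2) props}(i) to pass from $\cW^{*}$ satisfying (C2) to $\ov{\cW^{*}}=\cW^{T}$ satisfying (C2) — but the first route is shortest.)

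There is no genuine obstacle here: the content is entirely carried by Lemmas \ref{C1C2 duality} and \ref{complex con C1}, and the only thing to be careful about is the bookkeeping that the complex conjugate of the adjoint of a matrix is its transpose, so that the composition $\cW\mapsto\ov{\cW}\mapsto(\ov{\cW})^{*}$ equals the single operation $\cW\mapsto\cW^{T}$. The one-line proof will simply record this composition.

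\begin{proof}[\bf Proof]
By Lemma \ref{complex con C1}, $\cW$ satisfies (C1) if and only if $\ov{\cW}$ satisfies (C1). Applying Lemma \ref{C1C2 duality} to $\ov{\cW}$, this holds if and only if $(\ov{\cW})^{*}$ satisfies (C2). Since $(\ov{\cW})^{*}=\cW^{T}$, the claim follows.
\end{proof}
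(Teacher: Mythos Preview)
Your proof is correct and follows exactly the approach indicated in the paper, which simply states that the corollary is obtained by combining Lemmas \ref{C1C2 duality} and \ref{complex con C1}. Your write-up makes the chain $\cW\mapsto\ov{\cW}\mapsto(\ov{\cW})^{*}=\cW^{T}$ explicit, which is precisely what the paper intends.
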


We can now prove the following addition to Theorems \ref{property C1} and \ref{property C2}.

\begin{theorem}\label{T:Lin}
Let $\cL$ in \eqref{cL} be a $*$-linear map with matricization $L$ and Choi matrix $\BL$. Let $\cV\subset \BF^{n\times q}$ be a subspace so that $L\in \ov{\cV} \otimes \cV$. Assume that $\cV$ satisfies $(C1)$ (resp.\ $(C2)$). Then for any minimal Hill representation \eqref{HillRep}, for $\whatA$ defined as in Proposition \ref{P:HillLBL} there exists a vector $z\in \BF^q$ such that $\widehat{A}(z\otimes I_n)$ has full row-rank (resp.\ a vector $x\in \BF^n$ such that $\widehat{A}(I_q \otimes x)$ has full row-rank).
\end{theorem}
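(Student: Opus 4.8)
The plan is to deduce the statement directly from Theorems~\ref{property C1} and~\ref{property C2}, once we know that the minimal subspace $\cW=\tu{span}\{L_{ij}\}$ attached to $\cL$ via \eqref{AkCond} is contained in the given subspace $\cV$. Indeed, properties (C1) and (C2) are inherited by subspaces (Lemma~\ref{sub alg} and Lemma~\ref{L: (C2) props}(iii)), so once $\cW\subseteq\cV$ is established, $\cW$ itself satisfies (C1) (resp.\ (C2)); then Theorem~\ref{property C1} (resp.\ Theorem~\ref{property C2}) produces the required vector $z\in\BF^q$ with $\whatA(z\otimes I_n)$ of full row rank (resp.\ $x\in\BF^n$ with $\whatA(I_q\otimes x)$ of full row rank), and for \emph{every} minimal Hill representation \eqref{HillRep}, since those theorems already quantify over all minimal Hill representations.

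So the real content is the inclusion $\cW\subseteq\cV$, which I would prove by a short block-Kronecker computation. Decompose $L=[L_{ij}]$ as in \eqref{Lblock} with $L_{ij}\in\BF^{n\times q}$ for $i=1,\ldots,n$ and $j=1,\ldots,q$. Fix a basis $P_1,\ldots,P_d$ of $\cV$; since the $\ov{P}_a$ remain linearly independent, $\ov{\cV}\otimes\cV=\tu{span}\{\ov{P}_a\otimes P_b:a,b=1,\ldots,d\}$, so $L=\sum_{a,b}c_{ab}\,\ov{P}_a\otimes P_b$ for scalars $c_{ab}\in\BF$. Because the $(i,j)$ block of $\ov{P}_a\otimes P_b$ equals $(\ov{P}_a)_{ij}P_b$, we read off $L_{ij}=\sum_b\big(\sum_a c_{ab}(\ov{P}_a)_{ij}\big)P_b\in\cV$. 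Hence $\cW=\tu{span}\{L_{ij}:i,j\}\subseteq\cV$, as desired. (In particular $\cW$ is the smallest subspace for which $L\in\ov{\cW}\otimes\cW$, Corollary~\ref{C:Lincl} showing that it is indeed such a subspace.)

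I do not expect a genuine obstacle here: the argument is a two-line reduction together with a routine block-Kronecker identity. The only point requiring mild care is the bookkeeping in that identity --- making sure the complex conjugation sits on the first tensor factor $\ov{P}_a$ and that the block partition of $L$ as $n\times q$ blocks of size $n\times q$ is matched to that of $\ov{P}_a\otimes P_b$ --- and, in the (C2) half of the statement, invoking the subspace clause Lemma~\ref{L: (C2) props}(iii) together with Theorem~\ref{property C2} rather than the (C1) machinery.
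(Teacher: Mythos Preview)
Your proposal is correct and follows essentially the same route as the paper: the paper's proof also observes that $L\in\ov{\cV}\otimes\cV$ forces each block $L_{ij}$ to lie in $\cV$, hence $\cW\subseteq\cV$, then invokes Lemma~\ref{sub alg} and Lemma~\ref{L: (C2) props}(iii) to pass (C1)/(C2) down to $\cW$, and finishes with Theorems~\ref{property C1} and~\ref{property C2}. Your block-Kronecker computation simply spells out the one-line claim ``$L_{ij}\in\cV$'' that the paper leaves implicit.
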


\begin{proof}[\bf Proof]
The fact that $L\in \ov{\cV} \otimes \cV$ implies that the blocks $L_{ij}$ of $L$ are all contained in $\cV$. Then also $\cW$ defined in \eqref{AkCond} is contained in $\cV$. By Lemma \ref{sub alg} and item (iii) in Lemma \ref{L: (C2) props} it follows that $\cW$ has property (C1) or property (C2) whenever $\cV$ has property (C1) or property (C2), respectively. The claims of the theorem then follows from Theorems \ref{property C1} and \ref{property C2}.
\end{proof}

So far we have not seen examples of subspaces $\cW$ satisfying (C1) or (C2). For the purpose of the present paper we are only interested in a specific class of subspaces for which this occurs, which will be proved in the next subsection. The problem to characterize subspaces $\cW$ satisfying (C1) or (C2) is left as a question.

\begin{question}
Which subspaces $\cW$ of $\BF^{n \times q}$ satisfy (C1) and which satisfy (C2)?
\end{question}

\subsection{The bicommutant of a matrix satisfies (C1) and (C2)}\label{SubS:bicomm}

Our main result in this subsection is the following theorem.

\begin{theorem}\label{main result for double commutant}
For any matrix $A \in \BF^{n \times n}$  the algebra $\left\{A\right\}_{\BF}''$ satisfies both conditions \tu{(C1)} and \tu{(C2)}.
\end{theorem}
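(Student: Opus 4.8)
The plan is to reduce to the Jordan form of $A$ and then exploit the structural results of Section~\ref{LAP} together with the stability properties of conditions (C1) and (C2) established in Lemmas~\ref{similar alg}--\ref{direct sum of alg} and Lemma~\ref{L: (C2) props}. By Corollary~\ref{duality transpose} it suffices to verify that $\{A\}_{\BF}''$ satisfies (C1); condition (C2) will then follow since $\left(\{A\}_{\BF}''\right)^T=\{A^T\}_{\BF}''$ is again the bicommutant of a matrix (of the same kind), so showing (C1) for \emph{all} bicommutants simultaneously gives (C2) for all of them as well. Moreover, by Lemma~\ref{complex con C1} the condition (C1) is insensitive to complex conjugation, which matches the conjugation appearing in the formulas \eqref{AbicomC} and \eqref{AbicomR}.

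First I would handle the complex case $\BF=\BC$. Write $A=P\,J_A\,P^{-1}$ as in \eqref{JordanComp}. By \eqref{AbicomC} we have $\{A\}_{\BC}''=P\,\{J_A\}_{\BC}''\,P^{-1}$, so by Lemma~\ref{similar alg} it is enough to show that $\{J_A\}_{\BC}''$ satisfies (C1). Since $\{J_A\}_{\BC}''=\diag\left(\fT_{\un n_1,\BC}^+,\ldots,\fT_{\un n_r,\BC}^+\right)$ is a direct sum (in the sense of block-diagonal embedding into $\BC^{n\times n}$) of the algebras $\fT_{\un n_j,\BC}^+$, by Lemma~\ref{direct sum of alg} it suffices to show that each $\fT_{\un n,\BC}^+$ satisfies (C1). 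Recall $\fT_{\un n,\BC}^+$ consists of block-diagonal matrices $\diag(T_1,\ldots,T_k)$ with $T_i$ an upper-triangular Toeplitz matrix of size $n_i$, each being the compression of a single ``largest'' upper-triangular Toeplitz matrix $T$ of size $n_1$ to its first $n_i$ rows and columns. So an element of $\fT_{\un n,\BC}^+$ is parametrised by a polynomial $p$ in the shift $S_{n_1}$, i.e.\ by the coefficient vector $(c_0,\ldots,c_{n_1-1})$, and the compressions are determined by the same coefficients.

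The heart of the argument is therefore: given linearly independent elements $X_1,\ldots,X_\ell$ of $\fT_{\un n,\BC}^+$, produce a single vector $v$ with $X_1v,\ldots,X_\ell v$ linearly independent. Here linear independence of the $X_i$ means their defining coefficient vectors (in $\BC^{n_1}$, say) are linearly independent. The natural candidate is a ``generic'' vector $v$; concretely I expect one can take $v=e_1^{(n)}$ padded appropriately across the blocks, since multiplying an upper-triangular Toeplitz block by $e_1$ reads off the last column, which is exactly the coefficient vector in reverse — or, more robustly, one should take $v$ supported on the largest block and chosen so that the map $X\mapsto Xv$ restricted to the largest block is injective on $\fT_{n_1,\BC}^+$; this is just the statement that a nonzero polynomial in $S_{n_1}$ does not annihilate a cyclic vector, which is true for $v=e_1^{(n_1)}$ (equivalently $e_n$ after reordering) since $S_{n_1}$ has that vector as a cyclic vector for its (maximal) Jordan block. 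Extending this $v$ by zeros on the smaller blocks and checking that independence of the full tuple $X_1,\ldots,X_\ell$ still forces independence of $X_1v,\ldots,X_\ell v$ is the step requiring genuine care: one must argue that if the $X_i$ were independent but the $X_iv$ dependent, some nonzero combination $X=\sum\alpha_iX_i$ kills $v$ on every block, and since the smaller blocks are compressions of the big one, vanishing of the polynomial on a cyclic vector of the big block forces $X=0$, a contradiction. This block-compression bookkeeping is the main obstacle; everything else is assembly via the three stability lemmas.

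For the real case $\BF=\BR$, the plan is identical: use \eqref{AbicomR} to reduce via Lemma~\ref{similar alg} to $\{J_A\}_{\BR}''$, which is a direct sum of algebras of the form $\fT_{\un n,\BR}^+$ and $\fT_{\un m,\fC}^+$; by Lemma~\ref{direct sum of alg} it suffices to treat each summand. The blocks $\fT_{\un n,\BR}^+$ are handled exactly as the complex ones (the argument is field-independent). For the blocks $\fT_{\un m,\fC}^+$ — upper-triangular $\fC$-block-Toeplitz matrices of sizes $2m_i$ — the same idea works after noting these are again polynomials in the real Jordan block $J_{m_1}(C)=I_{m_1}\otimes C+S_{m_1}\otimes I_2$ and that $e_1^{(m_1)}\otimes w$, for a suitable $w\in\BR^2$ generating $\BR^2$ under $\fC$, is a cyclic-type vector making $X\mapsto Xv$ injective on the summand. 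The only subtlety is that independence over $\BR$ of matrices whose coefficients lie in $\fC$ must be tracked as independence over $\BR$ (not $\BC$), which is exactly what (C1) over $\BF=\BR$ asks, so no mismatch arises. Finally, (C2) for all bicommutants follows from (C1) for all bicommutants via Corollary~\ref{duality transpose} together with the fact that $\{A\}_{\BF}''^{\,T}=\{A^T\}_{\BF}''$, completing the proof.
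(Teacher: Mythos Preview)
Your approach is correct and follows the same overall reduction strategy as the paper (Jordan form via Lemma~\ref{similar alg}, then direct sums via Lemma~\ref{direct sum of alg}), with two minor but instructive deviations. First, for the repeated-compression algebras $\fT^+_{\un n,\BF}$ the paper does not argue directly as you do; instead it observes that $\fT^+_{\un n,\BF}$ is a subalgebra of the unconstrained direct sum $\diag\bigl(\fT^+_{n_1,\BF},\ldots,\fT^+_{n_k,\BF}\bigr)$, invokes Lemma~\ref{sub alg}, and applies Lemma~\ref{direct sum of alg} a second time, so that only the single-block algebras $\fT^+_{n,\BF}$ (and $\fT^+_{n,\fC}$ in the real case) need be treated by hand in Lemmas~\ref{alg lower triangular toeplitz} and~\ref{alg lower triangular toeplitz real}. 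This sidesteps entirely the compression bookkeeping you flag as the main obstacle; your direct argument via a cyclic vector supported on the largest block is also correct (the smaller blocks contribute nothing to $Xv$ and are determined by the largest one), but the paper's subalgebra trick is cleaner. Second, the paper establishes (C2) by repeating the (C1) argument, whereas your route through Corollary~\ref{duality transpose} and the observation $(\{A\}_{\BF}'')^T=\{A^T\}_{\BF}''$ is a slicker way to get (C2) for free once (C1) is known for \emph{all} bicommutants. One small correction: for an upper-triangular Toeplitz block $T\in\fT^+_{n,\BC}$ it is $Te_n$, not $Te_1$, that reads off the full coefficient vector, so the cyclic vector on the largest block should be $e_{n_1}^{(n_1)}$ rather than $e_1^{(n_1)}$ (consistent with the paper's use of $e_n$ for $\fT^+_{n,\BF}$ in the (C2) part of Lemma~\ref{alg lower triangular toeplitz}).
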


To prove this result we first prove the claim for some special case.

\begin{lemma} \label{alg lower triangular toeplitz}
The matrix algebras $\fT_{n,\BF}^{-}$ and $\fT_{n,\BF}^{+}$ both satisfy conditions \tu{(C1)} and \tu{(C2)}.
\end{lemma}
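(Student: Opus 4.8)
The plan is to prove the statement for $\fT_{n,\BF}^{-}$ and then deduce it for $\fT_{n,\BF}^{+}$ by transposition. Since by Corollary~\ref{duality transpose} a subspace satisfies (C1) iff its transpose satisfies (C2), and since $(\fT_{n,\BF}^{-})^T=\fT_{n,\BF}^{+}$, it suffices to show that $\fT_{n,\BF}^{-}$ satisfies \emph{both} (C1) and (C2): (C1) for $\fT_{n,\BF}^{-}$ gives (C2) for $\fT_{n,\BF}^{+}$, and (C2) for $\fT_{n,\BF}^{-}$ gives (C1) for $\fT_{n,\BF}^{+}$. So the whole lemma reduces to checking (C1) and (C2) for the single algebra $\fT_{n,\BF}^{-}$ of lower triangular Toeplitz matrices.

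For (C1): given linearly independent lower triangular Toeplitz matrices $X_1,\dots,X_k$, each $X_i=\sum_{j=0}^{n-1} c^{(i)}_j S_n^{*j}$ is a polynomial in $N:=S_n^*$ (the lower shift), so $X_i=p_i(N)$ with $\deg p_i\le n-1$, and linear independence of the $X_i$ is exactly linear independence of the polynomials $p_i$ modulo $x^n$. The natural candidate for the separating vector is $v=e_1^{(n)}$: then $X_i e_1 = p_i(N)e_1$, and since $N^j e_1 = e_{j+1}^{(n)}$ for $0\le j\le n-1$, the vector $X_i e_1$ is precisely the coefficient vector $(c^{(i)}_0,\dots,c^{(i)}_{n-1})^T$ of $p_i$. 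Hence $X_1e_1,\dots,X_ke_1$ are linearly independent in $\BF^n$ iff $X_1,\dots,X_k$ are linearly independent, which is exactly the hypothesis. This settles (C1).

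For (C2): given linearly independent $X_1,\dots,X_k\in\fT_{n,\BF}^{-}$, we need a $v\in\BF^n$ with $X_1^*v,\dots,X_k^*v$ independent. Now $X_i^*\in\fT_{n,\BF}^{+}$ is an \emph{upper} triangular Toeplitz matrix, $X_i^* = \ov{p_i}(S_n)$, and the analogous choice is $v=e_n^{(n)}$: since $S_n^{\,j}e_n = e_{n-j}^{(n)}$ for $0\le j\le n-1$, the vector $X_i^*e_n$ is the (reversed, conjugated) coefficient vector of $p_i$, so again $X_1^*e_n,\dots,X_k^*e_n$ are independent iff the $X_i$ are, using that $X_1,\dots,X_k$ independent iff $\ov{X}_1,\dots,\ov{X}_k$ independent. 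This settles (C2), and with the transposition remark above the lemma follows.

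I do not expect a genuine obstacle here; the only point requiring care is bookkeeping with the indexing conventions for $S_n$ versus $S_n^*$ (which is the upper and which the lower shift, and how $S_n^j$ acts on the standard basis vectors), so that the claim ``$X_ie_1$ equals the Toeplitz coefficient vector'' is stated with the correct direction. Everything else is the elementary identification of lower/upper triangular Toeplitz matrices with polynomials in the shift modulo $x^n$.
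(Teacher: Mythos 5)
Your proposal is correct and follows essentially the same route as the paper: both establish (C1) and (C2) for $\fT^{-}_{n,\BF}$ with the separating vectors $e_1$ and $e_n$ respectively, and then transfer to $\fT^{+}_{n,\BF}$ by the (C1)/(C2) duality (you invoke Corollary \ref{duality transpose}, the paper Lemma \ref{C1C2 duality}; both work since $(\fT^{-}_{n,\BF})^*=(\fT^{-}_{n,\BF})^T=\fT^{+}_{n,\BF}$). Your observation that $X\mapsto Xe_1$ sends a lower triangular Toeplitz matrix to its coefficient vector, hence is injective, is just a more direct phrasing of the paper's argument, which reaches the same conclusion by applying powers of $S_n^T$ and commutativity.
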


\begin{proof}[\bf Proof]
We prove the $\fT_{n,\BF}^{-}$ satisfies (C1) by showing we can always take $v=e_1$. Note that $\fT_{n,\BF}^{-}=\{S_n^T\}''_{\BF}$ is commutative. Take any linearly independent $X_1, \ldots, X_k$ in $\fT_{n,\BF}^{-}$. Suppose
\begin{align*}
\sum_{i=1}^k \alpha_iX_ie_1=0, \quad \text{where} \quad \alpha_i \in \BF \quad \text{with} \quad i=1, \ldots,k.
\end{align*}
Then for $j=0, \ldots, n-1$ we have
\[
0=\left(S_n^T\right)^j\sum_{i=1}^k \alpha_iX_ie_1=\sum_{i=1}^k \alpha_i\left(S_n^T\right)^jX_ie_1=\sum_{i=1}^k \alpha_iX_i\left(S_n^T\right)^je_1=\sum_{i=1}^k \alpha_iX_ie_{j+1}
\]
Hence all columns of the matrix $\sum_{i=1}^k \alpha_iX_i$ are zero, so that $\sum_{i=1}^k \alpha_iX_i=0$. The linear independence of $X_1, \ldots, X_k$ then implies that $\al_1=\cdots=\al_k=0$. Hence $X_1e_1, \ldots, X_k e_1$ are linearly independent, and thus $\fT_{n,\BF}^{-}$ satisfies (C1).


The proof of (C2) goes analogously, with the required vector being  $e_n \in \mathbb{F}$. The claims for $\fT^+_{n,\BF}$ follow by the duality result of Lemma \ref{C1C2 duality} since $(\fT^-_{n,\BF})^*=\fT^+_{n,\BF}$.
\end{proof}

\begin{lemma} \label{alg lower triangular toeplitz real}
The matrix algebras $\fT_{n,\fC}^{-}$ and $\fT_{n,\fC}^{+}$ both satisfy conditions \tu{(C1)} and \tu{(C2)}.
\end{lemma}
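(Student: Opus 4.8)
The statement is the analogue of Lemma \ref{alg lower triangular toeplitz} with $\BF$ replaced by the $2\times 2$ real matrix algebra $\fC$, so the natural strategy is to mimic the proof of Lemma \ref{alg lower triangular toeplitz} while being careful about the fact that entries are now $2\times 2$ blocks rather than scalars, and that the scalars $\alpha_i$ must be replaced by elements of $\fC$. First I would recall that $\fT_{n,\fC}^{-}=\{S_n^T\otimes I_2\}''_{\BR}$ (and dually $\fT_{n,\fC}^{+}=\{S_n\otimes I_2\}''_{\BR}$), so these are commutative algebras, and moreover every element commutes with $I_n\otimes C$ for every $C\in\fC$. The plan is to prove that $\fT_{n,\fC}^{-}$ satisfies (C1) by showing the test vector $v:=e_1^{(n)}\otimes w$ works for a suitable $w\in\BR^2$; since $\fC$ acts on $\BR^2$ as the complex numbers act on $\BC\cong\BR^2$, one can take any nonzero $w$, say $w=e_1^{(2)}$.

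The key computation: take linearly independent $X_1,\dots,X_k\in\fT_{n,\fC}^{-}$ and suppose $\sum_{i=1}^k X_i(e_1^{(n)}\otimes w)=0$ — note here there are no scalar coefficients $\alpha_i$ out front; instead linear independence over $\BR$ is what we use, but the structure forces us first to produce a $\fC$-linear relation. More precisely, writing each $X_i=\sum_{j=0}^{n-1}(S_n^T)^j\otimes C_{i,j}$ with $C_{i,j}\in\fC$, apply the commuting operators $(S_n^T)^j\otimes I_2$ for $j=0,\dots,n-1$ and $I_n\otimes C$ for $C\in\fC$ to the relation $\sum_i X_i(e_1\otimes w)=0$; using commutativity and $(S_n^T)^j(e_1^{(n)})=e_{j+1}^{(n)}$ this shows every block-column of $\sum_i X_i$, after right-multiplication by an arbitrary $C\in\fC$, annihilates $w$. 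Since $\fC$ acts transitively on $\BR^2\setminus\{0\}$ (it is $\cong\BC$ acting on itself), $\{Cw : C\in\fC\}=\BR^2$, hence each block-column of $\sum_i X_i$ is the zero $2\times n$ block, so $\sum_i X_i=0$, and linear independence over $\BR$ gives $X_i=0$ for all $i$ — contradiction. Thus $X_1v,\dots,X_kv$ are linearly independent and $\fT_{n,\fC}^{-}$ satisfies (C1). The proof of (C2) is analogous with test vector $e_n^{(n)}\otimes w$, using $(S_n)^j e_n^{(n)}=e_{n-j}^{(n)}$; alternatively (C2) for $\fT^-_{n,\fC}$ and both conditions for $\fT^+_{n,\fC}$ follow from Lemma \ref{C1C2 duality} together with $(\fT^-_{n,\fC})^*=\fT^+_{n,\fC}$ (the adjoint of $C_\lambda\in\fC$ is $C_{\ov\lambda}\in\fC$, so the transpose-conjugate operation preserves $\fC$-block Toeplitz structure up to swapping upper and lower triangular).

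The main obstacle, and the only place the argument genuinely differs from the scalar case of Lemma \ref{alg lower triangular toeplitz}, is keeping straight the bookkeeping between \emph{$\BR$-linear independence} of the $X_i$ (which is the hypothesis of (C1)) and \emph{$\fC$-module} structure of $\fT_{n,\fC}^{-}$: one must be sure that the conclusion "each $2\times n$ block-column annihilates all of $\BR^2$" really does follow, i.e.\ that applying the full family $\{I_n\otimes C\}$ recovers the whole block-column and not merely its action on the line $\BR w$. This is exactly where transitivity of the $\fC$-action on $\BR^2\setminus\{0\}$ enters, and it is the reason the single test vector $v=e_1^{(n)}\otimes w$ suffices despite the blocks being $2$-dimensional. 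Everything else is a routine transcription of the previous lemma's proof, so I would keep the write-up short and point to Lemma \ref{alg lower triangular toeplitz} for the parallel structure.
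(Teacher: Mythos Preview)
Your approach is essentially the paper's: prove (C1) for $\fT_{n,\fC}^-$ with the test vector $e_1^{(2n)}=e_1^{(n)}\otimes e_1^{(2)}$, exploit commutativity with the shifts $(S_n^T)^j\otimes I_2$ to propagate the relation to all block-columns, and handle the remaining three cases by the duality $(\fT_{n,\fC}^-)^*=\fT_{n,\fC}^+$ via Lemma~\ref{C1C2 duality}. Your device of also commuting with $I_n\otimes C$ for $C\in\fC$ and invoking transitivity of the $\fC$-action on $\BR^2\setminus\{0\}$ is a pleasant conceptual variant of what the paper does by hand: the paper simply observes that once the odd-indexed scalar columns of a matrix in $\fT_{n,\fC}^-$ vanish, the structure of each $2\times2$ block in $\fC$ forces the even-indexed columns to vanish as well.

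There is, however, a genuine slip in your write-up. To prove that $X_1v,\ldots,X_kv$ are linearly independent you must start from $\sum_i\alpha_iX_iv=0$ with $\alpha_i\in\BR$ and conclude that all $\alpha_i=0$. You instead assume $\sum_iX_iv=0$ with no coefficients, deduce $\sum_iX_i=0$, and then assert that ``linear independence over $\BR$ gives $X_i=0$ for all $i$''---but that implication is false, and in any case $\sum_iX_iv=0$ is not the negation of linear independence of the $X_iv$. The repair is immediate: carry the scalars $\alpha_i$ through every step (they commute past all the operators you apply), arrive at $\sum_i\alpha_iX_i=0$, and then linear independence of the $X_i$ gives $\alpha_i=0$. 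Equivalently, what your computation really establishes is that the linear map $X\mapsto Xv$ is injective on $\fT_{n,\fC}^-$, and that is a cleaner way to phrase it. A smaller side point: the parenthetical claim $\fT_{n,\fC}^-=\{S_n^T\otimes I_2\}''_\BR$ is incorrect---that bicommutant consists only of lower-triangular block-Toeplitz matrices with \emph{scalar} $2\times2$ blocks---but you never actually use it; you only need that $\fT_{n,\fC}^-$ is commutative and contains both $(S_n^T)^j\otimes I_2$ and $I_n\otimes C$, all of which is true.
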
 \begin{proof}[\bf Proof]
For the proof of (C1) we make use of the fact that 
$\fT_{n,\fC}^{-}$ is a commutative algebra. By Lemma \ref{result on tensors and algebras} we know $\fT_{n,\fC}^{-}=\fT_{n,\BF}^{-}\otimes \fC$ and hence $\left(\left(S_n^T\right)^l\otimes I_2\right) \in \fT_{n,\fC}^{-}$ for all $l=0,\ldots,n-1$. Take any linearly independent $X_1, \ldots, X_k$ in $\fT_{n,\fC}^{-}$.
Suppose \begin{align*}
    \sum_{i=1}^k \alpha_iX_ie_1^{(2n)}=0, \quad \text{where} \quad \alpha_i \in \BF \quad \text{with} \quad i=1, \ldots,k.
\end{align*} Then also \begin{align*} 0&=\left(\left(S_n^T\right)^j\otimes I_2\right)\sum_{i=1}^k \alpha_iX_ie_1^{(2n)}=\sum_{i=1}^k \alpha_i\left(\left(S_n^T\right)^j\otimes I_2\right)X_i\left(e_1^{(n)} \otimes e_1^{(2)}\right)\\&=\sum_{i=1}^k \alpha_iX_i\left(\left(S_n^T\right)^j\otimes I_2\right)\left(e_1^{(n)} \otimes e_1^{(2)}\right)=\sum_{i=1}^k \alpha_iX_i\left(e_{j+1}^{(n)} \otimes e_1^{(2)}\right)\\&=\sum_{i=1}^k \alpha_iX_ie_{2j+1}^{(2n)} \quad \text{for} \quad j=0, \ldots, n-1.\end{align*} Hence all columns of $\sum_{i=1}^k \alpha_iX_i$ with indices $2j+1$ are zero. Now because of the relation that exists  between the entries of column $2j+1$ and $2j+2$, for all $j=0,\ldots,n-1$, it follows that all columns with indices $2j+2$ are also zero.
Therefore $\sum_{i=1}^k \alpha_iX_i=0$, which can only be if $\alpha_i=0$ for all $i=1, \ldots,k$, since $X_1, \ldots, X_k$ are linearly independent. This proves that for the basis vector $e_1^{(2n)} \in \BF^{2n}$, the vectors $X_1e_1^{(2n)}, \ldots, X_ke_1^{(2n)}$ are linearly independent in $\BF^{2n}$, therefore $\fT^-_{n,\fC}$ satisfies (C1).

The proof for (C2) goes analogously, with the required vector being the basis vector $e_{2n-1}^{(2n)} \in \BF^{2n}.$ The claims for $\fT^+_{n,\fC}$ again follow by the duality result of Lemma \ref{C1C2 duality} and the fact that $(\fT^+_{n,\fC})^*=\fT^-_{n,\fC}$
\end{proof}

Now we prove Theorem \ref{main result for double commutant}, first for $\BF=\BC$ and $\BF=\BR$ separately.

\begin{theorem}\label{main result for alg for C}
For any matrix $A \in \BC^{n \times n}$ 
the algebra $\left\{A\right\}_{\BC}''$ satisfies both conditions \tu{(C1)} and \tu{(C2)}.
\end{theorem}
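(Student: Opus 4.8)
The plan is to strip away the Jordan conjugation and the block-diagonal structure, and then reduce the remaining ``repeated compressed Toeplitz'' piece to the single-block case already treated in Lemma \ref{alg lower triangular toeplitz}. First I would use \eqref{AbicomC}, namely $\{A\}_\BC'' = P\,\{J_A\}_\BC''\,P^{-1}$ with $P\in\textup{GL}(n,\BC)$, together with Lemma \ref{similar alg} and item (ii) of Lemma \ref{L: (C2) props}, to reduce the claim to showing that $\{J_A\}_\BC''=\diag\left(\fT^+_{\un{n}_1,\BC},\dots,\fT^+_{\un{n}_r,\BC}\right)$ satisfies \tu{(C1)} and \tu{(C2)}. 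Since this is a (block-diagonal) direct sum of the subspaces $\fT^+_{\un{n}_j,\BC}$, repeated application of Lemma \ref{direct sum of alg} and of item (iv) of Lemma \ref{L: (C2) props} then reduces the problem to a single tuple $\un{n}=(n_1,\dots,n_k)\in\BN^k$, ordered decreasingly: it remains to prove that $\fT^+_{\un{n},\BC}$ satisfies \tu{(C1)} and \tu{(C2)}.

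For that last step I would exploit that the map $\pi\colon\fT^+_{\un{n},\BC}\to\fT^+_{n_1,\BC}$ sending $\diag(T_1,\dots,T_k)$ to its first (and largest) diagonal block $T_1$ is a linear isomorphism: each $T_s$ is the compression of the upper triangular Toeplitz matrix $T_1$ to its first $n_s$ rows and columns, so the whole tuple is determined by $T_1$, while conversely every $T_1\in\fT^+_{n_1,\BC}$ arises in this way. Given linearly independent $X_1,\dots,X_\ell$ in $\fT^+_{\un{n},\BC}$, the images $\pi(X_1),\dots,\pi(X_\ell)$ are linearly independent in $\fT^+_{n_1,\BC}$, so Lemma \ref{alg lower triangular toeplitz} produces a vector $w\in\BC^{n_1}$ with $\pi(X_1)w,\dots,\pi(X_\ell)w$ linearly independent; taking $v\in\BC^{n_1+\dots+n_k}$ equal to $w$ in its first $n_1$ coordinates and $0$ elsewhere, $X_iv$ agrees with $\pi(X_i)w$ in the first $n_1$ coordinates and is $0$ elsewhere, so $X_1v,\dots,X_\ell v$ are linearly independent, giving \tu{(C1)}. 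For \tu{(C2)} I would run the same argument with adjoints: $\pi(X_1)^*,\dots,\pi(X_\ell)^*$ are linearly independent lower triangular Toeplitz matrices, the \tu{(C2)} part of Lemma \ref{alg lower triangular toeplitz} gives $w$ with $\pi(X_1)^*w,\dots,\pi(X_\ell)^*w$ linearly independent, and lifting as before produces a $v$ with $X_1^*v,\dots,X_\ell^*v$ linearly independent.

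The only genuine obstacle beyond invoking the cited lemmas is the coupling between the diagonal blocks of $\fT^+_{\un{n},\BC}$, which is exactly why one cannot simply apply Lemma \ref{direct sum of alg} to the individual Jordan blocks of $J_A$; the key observation that makes the reduction go through is that the compression relations make the largest block a faithful copy of the whole tuple. Finally, I would note that the real case $A\in\BR^{n\times n}$ in the next theorem is handled by the identical scheme, starting instead from \eqref{AbicomR} and using the additional families $\fT^+_{\un{m}_s,\fC}$ of blocks over the matrix representation $\fC$ of the complex numbers, with Lemma \ref{alg lower triangular toeplitz real} taking the place of Lemma \ref{alg lower triangular toeplitz}; combining the complex and real cases then yields Theorem \ref{main result for double commutant}.
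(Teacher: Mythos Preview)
Your argument is correct. The reduction to $\fT^+_{\un{n},\BC}$ is identical to the paper's, but the two diverge at the final step. The paper handles the coupled algebra $\fT^+_{\un{n},\BC}$ by observing it is a subalgebra of the \emph{uncoupled} direct sum $\diag\bigl(\fT^+_{n_1,\BC},\dots,\fT^+_{n_k,\BC}\bigr)$ and then invoking Lemma~\ref{sub alg} together with another application of Lemma~\ref{direct sum of alg}; you instead use the projection isomorphism $\pi$ onto the largest block and lift the vector produced by Lemma~\ref{alg lower triangular toeplitz} back by zero-padding. Your route is slightly more concrete and avoids a second pass through the direct-sum lemma, while the paper's route is fully modular---it never needs to look inside $\fT^+_{\un{n},\BC}$ beyond recognizing the subalgebra inclusion. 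Both exploit the same underlying fact, namely that the compression relations make the largest block carry all the information, which you correctly identify as the key point.
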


\begin{proof}
[\bf Proof] Take $A \in \BC^{n \times n}$ with Jordan form as in \eqref{JordanComp}. By Lemma \ref{similar alg} we know it suffices to prove that the algebra $\{J_A\}_{\BC}''$ satisfies condition (C1). Using Lemma \ref{direct sum of alg} we need only prove that the matrix algebra $\fT^{+}_{\underline{n}_j,\BC}$ satisfies (C1) for all $j=1, \ldots r$. Equivalently, by Lemma \ref{sub alg}, we can show that $\diag\left(\fT^{+}_{n_{j,1},\BC}, \ldots,\fT^{+}_{n_{j,k_j},\BC}\right)$ satisfies (C1) for every $j=1, \ldots, r$, since $\fT^{+}_{\underline{n}_j,\BC}$ is a subalgebra  of $\diag\left(\fT^{+}_{n_{j,1},\BC}, \ldots,\fT^{+}_{n_{j,k_j},\BC}\right)$. Again by Lemma \ref{direct sum of alg} we need only prove that $\fT^{+}_{n_{j,q},\BC}$ satisfies (C1) for all $j=1, \ldots,r$ and $q=1, \ldots,k_j$, which is the case by Lemma \ref{alg lower triangular toeplitz} and Lemma \ref{C1C2 duality}.\\ \indent The proof for (C2) goes analogously, using the fact that $\fT^{+}_{n,\BC}$ satisfies (C2).
\end{proof}

\begin{theorem}\label{main result for alg for R}
For any matrix $A \in \BR^{n \times n}$ the algebra $\{A\}_{\BR}''$ satisfies both conditions \tu{(C1)} and \tu{(C2)}.
\end{theorem}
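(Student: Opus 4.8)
The plan is to mimic the reduction used in the proof of Theorem~\ref{main result for alg for C}, replacing the complex Jordan form by the real Jordan form \eqref{JordanReal} and using Lemma~\ref{alg lower triangular toeplitz real} (in place of Lemma~\ref{alg lower triangular toeplitz}) to handle the blocks built over $\fC$.

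Concretely, I would write $A=P\,J_A\,P^{-1}$ with $P\in\textup{GL}(n,\BR)$ and $J_A$ as in \eqref{JordanReal}. By \eqref{AbicomR} we then have $\{A\}_{\BR}''=P\,\{J_A\}_{\BR}''\,P^{-1}$ with
\[
\{J_A\}_{\BR}''=\diag\left(\fT_{\un{m}_1,\fC}^+,\ldots,\fT_{\un{m}_{r_1},\fC}^+,\fT_{\un{n}_1,\BR}^+,\ldots,\fT_{\un{n}_{r_2},\BR}^+\right),
\]
so by Lemma~\ref{similar alg} it suffices to show that $\{J_A\}_{\BR}''$ satisfies \tu{(C1)} (the argument for \tu{(C2)} being the same, with the (C2)-versions of the auxiliary lemmas). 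Since $\{J_A\}_{\BR}''$ is a direct sum of the algebras $\fT_{\un{m}_s,\fC}^+$ and $\fT_{\un{n}_j,\BR}^+$, an iterated application of Lemma~\ref{direct sum of alg} reduces the task to verifying \tu{(C1)} for each of these summands separately.

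For the blocks coming from the real eigenvalues, $\fT_{\un{n}_j,\BR}^+$ is a subalgebra of $\diag\left(\fT_{n_{j,1},\BR}^+,\ldots,\fT_{n_{j,k_j},\BR}^+\right)$; the latter satisfies \tu{(C1)} by Lemma~\ref{direct sum of alg} together with Lemma~\ref{alg lower triangular toeplitz}, hence so does $\fT_{\un{n}_j,\BR}^+$ by Lemma~\ref{sub alg}. For the blocks $\fT_{\un{m}_s,\fC}^+$ coming from the non-real eigenvalues one argues identically: $\fT_{\un{m}_s,\fC}^+$ is a subalgebra of $\diag\left(\fT_{m_{s,1},\fC}^+,\ldots,\fT_{m_{s,l_s},\fC}^+\right)$, which satisfies \tu{(C1)} by Lemma~\ref{direct sum of alg} and Lemma~\ref{alg lower triangular toeplitz real}, so Lemma~\ref{sub alg} gives \tu{(C1)} for $\fT_{\un{m}_s,\fC}^+$. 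Running the same chain of reductions with \tu{(C2)} in place of \tu{(C1)}, now using the \tu{(C2)} assertions of Lemmas~\ref{alg lower triangular toeplitz} and~\ref{alg lower triangular toeplitz real} and the corresponding parts of Lemma~\ref{L: (C2) props} for the similarity, subalgebra and direct-sum steps, yields that $\{A\}_{\BR}''$ also satisfies \tu{(C2)}. (Together with Theorem~\ref{main result for alg for C} this then gives Theorem~\ref{main result for double commutant}.)

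I expect no real obstacle here: once the real Jordan structure is in hand, the statement dissolves into the single-block facts already established. The only point where the real case genuinely differs from the complex one is the presence of the $2\times 2$ block structure inherited from $\fC$, and that difficulty has already been absorbed into Lemma~\ref{alg lower triangular toeplitz real}; so the substantive part of this theorem has been front-loaded into that lemma, and what remains is bookkeeping with Lemmas~\ref{similar alg},~\ref{sub alg} and~\ref{direct sum of alg}.
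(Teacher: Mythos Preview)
Your proposal is correct and follows essentially the same route as the paper's proof: reduce to $\{J_A\}_{\BR}''$ via Lemma~\ref{similar alg}, split into the direct summands $\fT_{\un{m}_s,\fC}^+$ and $\fT_{\un{n}_j,\BR}^+$ via Lemma~\ref{direct sum of alg}, embed each into the larger block-diagonal algebra via Lemma~\ref{sub alg}, and finish with Lemmas~\ref{alg lower triangular toeplitz} and~\ref{alg lower triangular toeplitz real} on the individual Toeplitz blocks. The only cosmetic difference is that the paper also cites Corollary~\ref{duality transpose} at the final step, but since Lemmas~\ref{alg lower triangular toeplitz} and~\ref{alg lower triangular toeplitz real} already assert both \tu{(C1)} and \tu{(C2)} for the $\fT^+$ algebras, your version is complete as stated.
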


\begin{proof}[\bf Proof]
Take $A \in \BR^{n \times n}$ with Jordan form as in \eqref{JordanReal}. By Lemma \ref{similar alg} we know it suffices to prove that the algebra $\{J_A\}_{\BR}''$ satisfies condition (C1). Using Lemma \ref{direct sum of alg} we need only prove that the matrix algebras $\fT^{+}_{\un{m}_s,\fC}$ and $\fT^{+}_{\underline{n}_j,\BR}$ satisfy (C1) for all $j=1, \ldots r_2$ and $s=1, \ldots r_1$. Or equivalently, by Lemma \ref{sub alg} we can show that the algebras $\diag\left(\fT^{+}_{m_{s,1},\fC}, \ldots,\fT^{+}_{m_{s,l_s},\fC}\right)$ and $\diag\left(\fT^{+}_{n_{j,1},\BR}, \ldots,\fT^{+}_{n_{j,k_j},\BR}\right)$ satisfy (C1) for every $j=1, \ldots, r_2$ and $s=1,\ldots,r_1$, since $\fT^{+}_{\underline{m}_s,\fC}$ is a subalgebra  of $\diag\left(\fT^{+}_{m_{s,1},\fC}, \ldots,\fT^{+}_{m_{s,l_s},\fC}\right)$ and $\fT^{+}_{\underline{n}_j,\BR}$ is a subalgebra  of $\diag\left(\fT^{+}_{n_{j,1},\BR}, \ldots,\fT^{+}_{n_{j,k_j},\BR}\right)$. Again by Lemma \ref{direct sum of alg} we need only prove that $\fT^{+}_{m_{s,p},\fC}$ and $\fT^{+}_{n_{j,q},\BR}$ satisfy (C1) for all $s=1, \ldots, r_1$, $p=1,\ldots,l_s$, $j=1, \ldots,r_2$ and $q=1, \ldots,k_j$, which is the case by Lemma \ref{alg lower triangular toeplitz}, Lemma \ref{alg lower triangular toeplitz real} and Corollary \ref{duality transpose}. \\ \indent The proof for (C2) goes analogously, making use of the fact that $\fT^{+}_{n,\BR}$ and $\fT_{n,\fC}^{+}$ satisfy (C2).
\end{proof}

\begin{proof}[\bf Proof of Theorem \ref{main result for double commutant}]
This follows directly from Theorems \ref{main result for alg for C} and \ref{main result for alg for R}.
\end{proof}

We can now also prove our second main result given in the introduction.

\begin{proof}[\bf Proof of Theorem \ref{T:Main2}]
This follows directly from Theorems \ref{T:Lin} and \ref{main result for double commutant}.
\end{proof}

 \section{A Hill-Pick matrix criterium for the Lyapunov order}\label{S:HillPick}

In this section we return to the Lyapunov order. We prove Theorem \ref{main thm} and use this result together with Hill representations to determine an explicit matrix criterium for Lyapunov domination.

Throughout this section let $A,B\in \BF^{n \times n}$ with $A$ Lyapunov regular and $B\in\{A\}_{\BF}''$. Define the Lyapunov operators $\cL_A$ and $\cL_B$ by \eqref{LyapOp}, and denote their respective matricizations and Choi matrices by $L_A$ and $L_B$ and by $\BL_A$ and $\BL_B$. Since $A$ is Lyapunov regular, $\cL_A$ is invertible. Hence we can define the linear map $\cL_{A,B}$ as in \eqref{cL_AB}. Write $L_{A,B}$ and $\BL_{A,B}$ for the matricization and Choi matrices of $\cL_{A,B}$, respectively.

\begin{proof}[\bf Proof of Theorem \ref{main thm}]
It is easily verified that $\cL_A$ and $\cL_B$ are $*$-linear maps. By the comment on the bottom of page 62 in \cite{KMP00} we also know that $\cL_A^{-1}$ is $*$-linear, so that the composition $\cL_{A,B}$ of $\cL_A^{-1}$ and $\cL_B$ is also $*$-linear. Moreover, since $\cL_A$ is invertible, so is $L_A$ and $L_A^{-1}$ is the matricization of $\cL_A^{-1}$, so that $L_{A,B}=L_B L_{A}^{-1}$.

Proposition \ref{P:HillLBL} implies that the matricizations of $\cL_A$ and $\cL_B$ are given by
\begin{equation}\label{LALBform}
L_A=A^T\otimes I_n+I_n\otimes A^*,\quad
L_B=B^T\otimes I_n+I_n\otimes B^*.
\end{equation}
Since $B\in\{A\}_{\BF}''$, we have $B^*\in\{A^*\}_{\BF}''$. It then follows that $L_A$ and $L_B$ are both contained in the matrix algebra $\ov{\{A^*\}_{\BF}''}\otimes \{A^*\}_{\BF}''$. Moreover, $L_A^{-1}$ is in $\ov{\{A^*\}_{\BF}''}\otimes \{A^*\}_{\BF}''$ too. Indeed, since $\ov{\{A^*\}_{\BF}''}\otimes \{A^*\}_{\BF}''$ is an algebra that contains $L_A$, it also contains the bicommutant $\{L_A\}_{\BF}''$ of $L_A$, which in turn contains $L_A^{-1}$. We conclude that $L_{A,B}$ is contained in $\ov{\{A^*\}_{\BF}''}\otimes \{A^*\}_{\BF}''$, so that the proof is complete by an application of Theorem \ref{T:Main2}.
\end{proof}

As a consequence one can verify Lyapunov domination $A\leq_\cL B$, at least in the case covered by Theorem \ref{main thm}, by computing the Choi matrix of $\cL_{A,B}$. Using Hill representations and the associated Hill matrix it is possible to determine a more explicit criterium, in terms of the Jordan structure of $A$ and corresponding form of $B$ obtained from the fact that $B\in \{A\}_{\BF}''$, as described in Subsection \ref{SubS:bicommA}.

We shall conduct the computations for the case where $\BF=\BC$ explicitly. For this purpose, let the Lyapunov regular $A\in\BC^{n \times n}$ be given in Jordan form \eqref{JordanComp}, that is,
\begin{equation}\label{Jor decomp for L}
A=P\, J_A\, P^{-1},\quad \mbox{with }J_A=\diag\left(J_{\un{n}_1}\left(\la_1\right),\ldots,J_{\un{n}_r}\left(\la_r\right)\right)
\end{equation}
where $P\in\textup{GL}(n,\BC)$ and $\un{n}_j=\left(n_{j,1},\ldots,n_{j,k_j}\right)\in\BN^{k_j}$, ordered decreasingly, for some $k_j\in\BN$ and for $j=1,\ldots,r$, so that $\sum_{j=1}^r \sum_{i=1}^{k_j} n_{j,i}=n$. As explained in Subsection \ref{SubS:bicommA}, the matrix $B \in \{A\}_{\BC}''$ has the form
\begin{equation}\label{B decomp for L}
\begin{aligned}
B=P\widetilde{B}P^{-1}\quad\mbox{with}\quad\widetilde{B}=\diag\left(T_1, \ldots, T_r\right)\\
\mbox{and}\quad T_j=\diag\left(T_{j,1},\ldots,T_{j,k_j}\right) \in \fT_{\un{n}_j,\BC}^+,
\end{aligned}
\end{equation}
where the matrices $T_{j,q}$ are given by
\begin{equation}\label{Toep for B}
T_{j,q}=\sum_{i=0}^{n_{j,q}-1}t_{j,i}S_{n_{j,q}}^{i}.
\end{equation}
Note that the scalar coefficients $t_{j,i}$ do not depend on $q$. As a first step towards computing the Hill-Pick matrix $\BH_{A,B}$, we compute the matricization $L_{A,B}$.

\subsection{Computing $L_{A,B}$ for $\BF=\BC$}

First we establish the relation between $L_{A,B}$ and the analogue for the matrices $J_A$ and $\wtilB$, and similarly for $\BL_{A,B}$.

\begin{lemma}\label{L:LABBLAB}
The matricization $L_{A,B}$ and Choi matrix $\BL_{A,B}$ for $A$ and $B$ as above are given by
\begin{equation}\label{LABBLAB}
\begin{aligned}
L_{A,B} & =\left(P^T\otimes P^*\right)^{-1}L_{J_A,\widetilde{B}}\left(P^T \otimes P^*\right), \\
\BL_{A,B} & =\left(P^T\otimes P^{-1}\right)^*\BL_{J_A,\widetilde{B}}\left(P^T \otimes P^{-1}\right).
\end{aligned}
\end{equation}
Here $L_{J_A,\widetilde{B}}$ and $\BL_{J_A,\widetilde{B}}$ are the matricization and Choi matrix of the linear matrix map $\cL_{J_A,\wtilB}$ defined as in \eqref{cL_AB} with $A$ and $B$ replaced by $J_A$ and $\wtilB$, respectively.
\end{lemma}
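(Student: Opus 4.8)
The plan is to reduce the whole statement to a single conjugation identity for the $*$-linear maps themselves and then transport it to the matricization and to the Choi matrix. First I would introduce the congruence map $\Phi_P\colon\BF^{n\times n}\to\BF^{n\times n}$, $\Phi_P(X)=(P^{-1})^*XP^{-1}$, which is invertible with $\Phi_P^{-1}(X)=P^*XP$. Since $A=PJ_AP^{-1}$ gives $A^*=(P^{-1})^*J_A^*P^*$, a direct computation shows $\cL_A=\Phi_P\circ\cL_{J_A}\circ\Phi_P^{-1}$, and the identical computation with $B=P\widetilde{B}P^{-1}$ gives $\cL_B=\Phi_P\circ\cL_{\widetilde{B}}\circ\Phi_P^{-1}$. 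Because $J_A$ is similar to $A$ it has the same eigenvalues and is therefore Lyapunov regular, so $\cL_{J_A}$ is invertible and $\cL_{J_A,\widetilde{B}}=\cL_{\widetilde{B}}\circ\cL_{J_A}^{-1}$ is well defined; composing the two identities and cancelling $\Phi_P^{-1}\circ\Phi_P$ yields $\cL_{A,B}=\Phi_P\circ\cL_{J_A,\widetilde{B}}\circ\Phi_P^{-1}$.

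For the matricization I would use that $L_{\cM}$ is simply the matrix of $\cM$ under the vectorization isomorphisms, hence multiplicative under composition, together with the fact that the matricization of the map $X\mapsto RXS$ is $S^T\otimes R$ (since $\vect(RXS)=(S^T\otimes R)\vect(X)$). Applied to $\Phi_P^{-1}$ and $\Phi_P$ this gives matricizations $P^T\otimes P^*$ and $(P^T)^{-1}\otimes(P^*)^{-1}=(P^T\otimes P^*)^{-1}$, and the first identity of the lemma follows at once. Alternatively one may substitute $A=PJ_AP^{-1}$, $A^*=(P^*)^{-1}J_A^*P^*$, $A^T=(P^T)^{-1}J_A^TP^T$ into the formula $L_A=A^T\otimes I_n+I_n\otimes A^*$ recorded in \eqref{LALBform}, factor out $(P^T\otimes P^*)^{\pm1}$ using \eqref{kron prop}, repeat for $L_B$, and multiply, with Lyapunov regularity of $A$ (equivalently of $J_A$) guaranteeing that $L_A$ and $L_{J_A}$ are invertible.

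For the Choi matrix the key points are two transformation rules, both checked directly from $\BL=[\cL(\cE_{ij}^{(n)})]_{ij}$ and bilinearity: the Choi matrix of $X\mapsto R\cL(X)S$ equals $(I_n\otimes R)\BL(I_n\otimes S)$ (postcomposition), while the Choi matrix of $X\mapsto\cL(RXS)$ equals $(R^T\otimes I_n)\BL(S^T\otimes I_n)$ (precomposition). Writing $\cL_{A,B}(X)=(P^*)^{-1}\cL_{J_A,\widetilde{B}}(P^*XP)P^{-1}$ and applying first the precomposition rule with $R=P^*$, $S=P$ and then the postcomposition rule with $R=(P^*)^{-1}$, $S=P^{-1}$, one obtains
\[
\BL_{A,B}=\big(I_n\otimes(P^*)^{-1}\big)\big(\overline P\otimes I_n\big)\BL_{J_A,\widetilde{B}}\big(P^T\otimes I_n\big)\big(I_n\otimes P^{-1}\big)=\big(\overline P\otimes(P^*)^{-1}\big)\BL_{J_A,\widetilde{B}}\big(P^T\otimes P^{-1}\big),
\]
and since $\overline P\otimes(P^*)^{-1}=(P^T)^*\otimes(P^{-1})^*=(P^T\otimes P^{-1})^*$ this is exactly the second identity.

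Everything here is elementary; the step I would be most careful with is the Choi-matrix computation, where one must keep the four avatars $P^T$, $P^*$, $\overline P$, $P^{-1}$ straight, since the matricization lives on $\BF^{n\times n}$ through $\vect_n$ whereas the Choi matrix uses the opposite Kronecker ordering of the two tensor legs — an unnoticed stray transpose or conjugate there is the only thing that could derail the argument, but there is no genuine conceptual obstacle.
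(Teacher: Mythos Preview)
Your proof is correct. For the matricization identity your argument is essentially the same as the paper's: the paper substitutes $A=PJ_AP^{-1}$ into the formula $L_A=A^T\otimes I_n+I_n\otimes A^*$, factors out $(P^T\otimes P^*)^{\pm1}$ via \eqref{kron prop}, does the same for $L_B$, and multiplies --- exactly the ``alternative'' route you describe. Your primary route via the conjugation map $\Phi_P$ is just a repackaging of the same computation at the level of maps rather than matrices.

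For the Choi matrix identity there is a genuine (if minor) difference in presentation. The paper does not compute anything here but simply cites the ``$4$-modularity property'' from an external reference \cite[Proposition 2.1]{P19}. You instead derive the pre- and postcomposition transformation rules for Choi matrices directly from the block definition $\BL=[\cL(\cE_{ij})]_{i,j}$ and then combine them. Your approach is more self-contained and makes the lemma independent of that citation; the paper's approach is shorter on the page but defers the work to the reference. Both arrive at the same identity, and your tracking of $P^T$, $P^*$, $\overline P$, $P^{-1}$ through the two Kronecker legs is accurate.
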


\begin{proof}[\bf Proof]
By the Kronecker product property \eqref{kron prop} and the formulas for $L_A$ and $L_B$ in \eqref{LALBform} we get
\[
L_A=\left(P^T\otimes P^*\right)^{-1}L_{J_A}\left(P^T \otimes P^*\right),\ \ L_B=\left(P^T\otimes P^*\right)^{-1}L_{\wtilB}\left(P^T \otimes P^*\right).
\]
This gives
\begin{equation*}\begin{aligned}
    L_{A,B}&= \left(P^T \otimes P^*\right)^{-1}L_{\widetilde{B}}\left(P^T \otimes P^*\right)\left(\left(P^T \otimes P^*\right)^{-1}L_{J_A}\left(P^T \otimes P^*\right) \right)^{-1} \\ &= \left(P^T \otimes P^*\right)^{-1}L_{\widetilde{B}}L_{J_A}^{-1}\left(P^T \otimes P^*\right)=\left(P^T\otimes P^*\right)^{-1}L_{J_A,\widetilde{B}}\left(P^T \otimes P^*\right). \end{aligned}
\end{equation*}
The $4$-modularity property in \cite[Proposition 2.1]{P19} yields
\begin{equation*} 
\BL_{A,B}=\left(P^T\otimes P^{-1}\right)^*\BL_{J_A,\widetilde{B}}\left(P^T \otimes P^{-1}\right).\qedhere
\end{equation*}
\end{proof}

Next we compute $L_{A,B}$ via an explicit computation of $L_{J_A,\wtilB}$. Note that $L_{J_A,\wtilB}$ is in $\ov{\{J_A^*\}_{\BC}''} \otimes \{J_A^*\}_{\BC}''=\{J_A^T\}_{\BC}'' \otimes \{J_A^T\}_{\BC}''$, so that $L_{J_A,\wtilB}$ is of the form as described in Proposition \ref{P:SelfTensCdoubleCom}.

\begin{proposition}\label{constructing L_BL_A^{-1}}
For $A \in \BC^{n \times n}$ Lyapunov regular, with Jordan decomposition as in \eqref{Jor decomp for L} and $B \in \{A\}''_{\BC}$ as in \eqref{B decomp for L} and \eqref{Toep for B}, it follows that
\begin{equation*}
     L_{A,B} =\left(P^T \otimes P^*\right)^{-1}\diag \left(R_1, ... ,R_r \right)\left(P^T \otimes P^*\right) , \end{equation*} where \begin{equation*} R_j = \diag\left(R_{j,1}, ... ,R_{j,k_j}\right) \in \fT^{-}_{\un{n}_j,\{J_A^T\}_{\BC}''},  \end{equation*} with \begin{align} \label{R for}
    R_{j,q}&=\sum_{i=0}^{n_{j,q}-1}  \left(S_{n_{j,q}}^T \right)^{i}\otimes F_{j,i} \in \BC^{nn_{j,q} \times nn_{j,q}}\end{align} and \begin{align} \label{F for}  F_{j,i}=\sum_{l=0}^{i}(-1)^{i-l} D_{j,l}\left(\lambda_jI_n + J_A^* \right)^{l-i-1} \in \{J_A^T\}_{\BC}'';
\end{align} here $D_{j,0}=t_{j,0}I_n + \widetilde{B}^*$ and $ D_{j,l}=t_{j,l}I_n$ for $l=1, \ldots,n_{j,q}-1.$ Moreover, the matrix $F_{j,i}$ can be computed explicitly as
\begin{equation}\label{formula for scalars of D}
F_{j,i}=\bigoplus_{a=1}^r\bigoplus_{b=1}^{k_a} \sum_{c=0}^{n_{a,b}-1} f_{j,i}^{a,c} \left(S_{n_{a,b}}^T\right)^{c}
\end{equation}
where
\begin{equation}\label{formula for scalars of D2}
f_{j,i}^{a,c}=
\sum_{d=0}^{c} \binom{d+i}{d} \frac{(-1)^{d+i}\ov{t}_{a,c-d}}{\left(\la_j + \ov{\la}_a\right)^{d+i+1}}  + \sum_{l=0}^{i} \binom{c+i-l}{c} \frac{(-1)^{i-l+c} t_{j,l}}{\left(\la_j + \ov{\la}_a\right)^{c+i-l+1}}.
\end{equation}

\end{proposition}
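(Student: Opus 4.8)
The plan is to compute $L_{J_A,\widetilde B}=L_{\widetilde B}L_{J_A}^{-1}$ directly and then transport it to $L_{A,B}$ via Lemma~\ref{L:LABBLAB}. First I would observe that by \eqref{LALBform} the matricization $L_{J_A}=J_A^T\otimes I_n+I_n\otimes J_A^*$ acts on the slot occupied by the second (Kronecker) factor together with $J_A^T$ acting on the first factor; since $J_A$ is block diagonal with blocks $J_{n_{j,q}}(\lambda_j)=\lambda_j I+S_{n_{j,q}}$ and $J_A^T$ inherits the conjugate-transpose structure, one reduces to a single Jordan block $J_{n_{j,q}}(\lambda_j)^T$ on the left. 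On that block, $L_{J_A}$ restricts to $(\lambda_j I_{n_{j,q}}+S_{n_{j,q}}^T)\otimes I_n+I_{n_{j,q}}\otimes J_A^*$, i.e. $I_{n_{j,q}}\otimes(\lambda_j I_n+J_A^*)+S_{n_{j,q}}^T\otimes I_n$. This is $(\lambda_j I_n+J_A^*)$ perturbed by the nilpotent $S_{n_{j,q}}^T\otimes I_n$; Lyapunov regularity of $A$ guarantees $\lambda_j I_n+J_A^*$ is invertible (its eigenvalues are $\lambda_j+\overline{\lambda}_a\ne 0$), so a finite Neumann series in the nilpotent shift gives
\[
L_{J_A}\big|_{j,q}^{-1}=\sum_{i=0}^{n_{j,q}-1}(-1)^i\big(S_{n_{j,q}}^T\big)^i\otimes(\lambda_j I_n+J_A^*)^{-i-1}.
\]

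Next I would multiply by $L_{\widetilde B}=\widetilde B^T\otimes I_n+I_n\otimes\widetilde B^*$ restricted to the same block. Since $B\in\{A\}''_{\BC}$, on the block corresponding to $\lambda_j$ the matrix $\widetilde B$ acts as the Toeplitz polynomial $T_{j,q}=\sum_i t_{j,i}S_{n_{j,q}}^i$, so $\widetilde B^T=\sum_i t_{j,i}(S_{n_{j,q}}^T)^i$; combining with $I_n\otimes\widetilde B^*$ and collecting powers of $S_{n_{j,q}}^T$ I obtain $L_{\widetilde B}\big|_{j,q}=\sum_{l}(S_{n_{j,q}}^T)^l\otimes D_{j,l}$ with $D_{j,0}=t_{j,0}I_n+\widetilde B^*$ and $D_{j,l}=t_{j,l}I_n$ for $l\ge 1$, all lying in $\{J_A^T\}''_{\BC}$. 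Taking the product of two block-Toeplitz-in-$S^T$ matrices with commuting coefficient matrices (the coefficients commute because $\lambda_j I_n+J_A^*$, $\widetilde B^*$ all lie in the commutative algebra $\{J_A^T\}''_{\BC}$), the $(S_{n_{j,q}}^T)^i$ coefficient of the product is $\sum_{l=0}^{i}D_{j,l}\,(-1)^{i-l}(\lambda_j I_n+J_A^*)^{l-i-1}$, which is exactly $F_{j,i}$ in \eqref{F for}. This yields $R_{j,q}$ as in \eqref{R for} and hence, reassembling the blocks and applying Lemma~\ref{L:LABBLAB}, the stated formula for $L_{A,B}$; the direct-sum structure $\diag(R_1,\dots,R_r)$ with $R_j=\diag(R_{j,1},\dots,R_{j,k_j})$ and the containment $R_j\in\fT^-_{\un n_j,\{J_A^T\}''_{\BC}}$ follow from Proposition~\ref{P:SelfTensCdoubleCom} (or rather its transpose version \eqref{formula for transpose C}), since $L_{J_A,\widetilde B}\in\{J_A^T\}''_{\BC}\otimes\{J_A^T\}''_{\BC}$.

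Finally, for \eqref{formula for scalars of D}--\eqref{formula for scalars of D2} I would make the abstract expression for $F_{j,i}\in\{J_A^T\}''_{\BC}$ fully explicit. Every element of $\{J_A^T\}''_{\BC}=\bigoplus_{a,b}\fT^-_{n_{a,b},\BC}$ is a direct sum over the Jordan blocks of $J_A$ of lower-triangular Toeplitz matrices in $S^T_{n_{a,b}}$, so it suffices to compute the scalar coefficients on each block. On the $(a,b)$ block, $J_A^*$ acts as $\overline{\lambda}_a I+S_{n_{a,b}}^T$ and $\widetilde B^*$ acts as $\sum_c\overline{t}_{a,c}(S_{n_{a,b}}^T)^c$. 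The two ingredients of $F_{j,i}$ are then: the inverse powers $(\lambda_j I_n+J_A^*)^{l-i-1}$, whose $(a,b)$-block is $((\lambda_j+\overline\lambda_a)I+S^T_{n_{a,b}})^{l-i-1}$, expanded by the binomial (negative-exponent) series $\sum_d\binom{d-(l-i-1)-1}{d}(-1)^d(\lambda_j+\overline\lambda_a)^{l-i-1-d}(S^T)^d$ — note this rewrites as $\binom{d+i-l}{d}$ with the appropriate sign; and the product of $\widetilde B^*$ (or $t_{j,l}I_n$) with these. Carefully tracking the two sources of $(S^T_{n_{a,b}})^c$ terms — one from the $l=0$ summand carrying $\widetilde B^*=\sum\overline t_{a,c-d}(S^T)^{c-d}$ against $(S^T)^d$, giving the first sum in \eqref{formula for scalars of D2}, and one from the $l\ge 0$ summands carrying $t_{j,l}I_n$ against $(S^T)^c$ from the inverse power, giving the second sum — and combining the binomial coefficients and signs yields $f_{j,i}^{a,c}$ as stated. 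The main obstacle is precisely this last bookkeeping step: getting the binomial coefficients $\binom{d+i}{d}$ and $\binom{c+i-l}{c}$ and the signs $(-1)^{d+i}$, $(-1)^{i-l+c}$ exactly right from the negative-binomial expansion of $(\lambda_j+\overline\lambda_a)I+S^T)^{l-i-1}$ and the truncation of the Toeplitz products at block size $n_{a,b}$; everything else is a routine, if lengthy, manipulation of commuting block-Toeplitz algebra.
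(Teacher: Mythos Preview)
Your proposal is correct and follows essentially the same approach as the paper: reduce via Lemma~\ref{L:LABBLAB} to $L_{J_A,\widetilde B}$, decompose into $(j,q)$-blocks, invert each block by a finite Neumann series in the nilpotent $S_{n_{j,q}}^T\otimes I_n$, multiply by the Toeplitz expansion of $L_{\widetilde B}$ to obtain the convolution formula for $F_{j,i}$, and finally expand $(\lambda_j I_n+J_A^*)^{l-i-1}$ on each $(a,b)$-Jordan block via the negative-binomial series to extract the scalar coefficients. The paper verifies the membership $R_j\in\fT^-_{\un n_j,\{J_A^T\}''_{\BC}}$ slightly more concretely, by observing directly that $F_{j,i}$ is independent of $q$ so that $R_{j,q}$ is the left-upper compression of $R_{j,1}$, rather than invoking \eqref{formula for transpose C}; otherwise the arguments coincide.
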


\begin{proof}[\bf Proof]
By Lemma \ref{L:LABBLAB} we have
\[
L_{A,B} =\left(P^T\otimes P^*\right)^{-1}L_{J_A,\widetilde{B}}\left(P^T \otimes P^*\right),\quad\mbox{with}\quad L_{J_A,\widetilde{B}}=L_{\wtilB} L_{J_A}^{-1},
\]
where
\[
L_{J_A}=J_A^T\otimes I_n + I_n \otimes J_A^* \ands L_{\wtilB}=\wtilB^T\otimes I_n + I_n \otimes \wtilB^*.
\]
Note that the matrix $L_{J_A}$ can be written as
\begin{equation*}
\begin{aligned}L_{J_A}&=\diag\left(J_{\un{n}_1}\left(\la_1\right)^T,\ldots,J_{\un{n}_r}\left(\la_r\right)^T\right)\otimes I_n + I_n \otimes J_A^* \\&= \bigoplus_{j=1}^r \left(J_{\un{n}_j}\left(\lambda_j\right)^T \otimes I_n + I_{n_j} \otimes J_A^*\right).
\end{aligned}
\end{equation*}
Similarly the matrix $L_{\widetilde{B}}$ can be written as
\begin{equation*} \begin{aligned} L_{\widetilde{B}}&=\diag\left(T_1^T, ... ,T_r^T\right)\otimes I_n + I_n \otimes \widetilde{B}^* = \bigoplus_{j=1}^r \left(T_j^T \otimes I_n + I_{n_j} \otimes \widetilde{B}^*\right).
\end{aligned}
\end{equation*}
This gives
\begin{equation*}
L_{J_A,\widetilde{B}}=L_{\widetilde{B}}L_{J_A}^{-1}= \bigoplus_{j=1}^r \left(T_j^T \otimes I_n + I_{n_j} \otimes \widetilde{B}^*\right) \left(J_{\un{n}_j}\left(\lambda_j\right)^T \otimes I_n + I_{n_j} \otimes J_A^*\right) ^{-1}.
\end{equation*}
For $j=1,\ldots,r$ it follows that \begin{equation*} \begin{aligned}
    J_{\un{n}_j}\left(\lambda_j\right)^T \otimes I_n + I_{n_j} \otimes J_A^* &= \diag\left(J_{n_{j,1}}^T(\lambda_j), ... , J_{n_{j,k_j}}^T(\lambda_j)\right) \otimes I_n + I_{n_j}\otimes J_A^* \\ &= \bigoplus_{q=1}^{k_j}\left(J_{n_{j,q}}^T\left(\lambda_j\right) \otimes I_n + I_{n_{j,q}}\otimes J_A^*\right),\end{aligned}
\end{equation*} and \begin{equation*} \begin{aligned}
    T_j^T \otimes I_n + I_{n_j}\otimes  \widetilde{B}^* &= \diag\left(T_{j,1}^T, ... , T_{j,k_j}^T\right) \otimes I_n + I_{n_j} \otimes \widetilde{B}^* \\ &= \bigoplus_{q=1}^{k_j}\left(T_{j,q}^T \otimes I_n + I_{n_{j,q}} \otimes \widetilde{B}^*\right).\end{aligned}
\end{equation*}
For $j=1, \ldots, r$ and $q=1, \ldots, k_j$ it follows that
\begin{equation*}\begin{aligned} T_{j,q}^T \otimes I_n + I_{n_{j,q}} \otimes \widetilde{B}^* &=\sum_{i=0}^{n_{j,q}-1}t_{j,i}\left(S^T_{n_{j,q}}\right)^{i} \otimes I_n +I_{n_{j,q}} \otimes \widetilde{B}^* \\ &= \sum_{i=1}^{n_{j,q}-1} \left(S_{n_{j,q}}^T\right)^{i} \otimes t_{j,i}I_n + I_{n_{j,q}} \otimes \left(t_{j,0}I_n+ \widetilde{B}^*\right) \\ &=\sum_{i=0}^{n_{j,q}-1} \left(S_{n_{j,q}}^T\right)^i \otimes
D_{j,i}, \end{aligned} \end{equation*}
with $D_{j,0}$ and $D_{j,i}$, $i=1, \ldots, n_{j,q}-1$, as defined in the proposition.
Furthermore, for $j=1, \ldots, r$ and $q=1, \ldots, k_j$, it follows that
\begin{equation*}
    \begin{aligned} \left(J_{n_{j,q}}^T\left(\lambda_j\right) \otimes I_n + I_{n_{j,q}} \otimes J_A^*\right)^{-1} &= \left(S_{n_{j,q}}^T \otimes I_n + I_{n_{j,q}} \otimes \left( \lambda_j I_n + J_A^* \right)    \right)^{-1} \\ &= \sum_{l=0}^{n_{j,q}-1}(-1)^{l}\left(S_{n_{j,q}}^T\right)^l\otimes \left(\lambda_jI_n + J_A^* \right)^{-l-1}.  \end{aligned}
\end{equation*}
Thus for $j=1,\ldots,r$ and $q=1,\ldots,k_j$ we have
\begin{equation*}
    \begin{aligned} &\left(T_{j,q}^T \otimes I_n + I_{n_{j,q}} \otimes \tilde{B}^*\right)\left(J_{n_{j,q}}^T\left(\lambda_j\right) \otimes I_n + I_{n_{j,q}}\otimes J_A^*\right)^{-1}\\ &\quad = \left(\sum_{i=0}^{n_{j,q}-1} \left(S_{n_{j,q}}^T\right)^i \otimes D_{j,i}\right)\left(\sum_{l=0}^{n_{j,q}-1}(-1)^{l}\left(S_{n_{j,q}}^T\right)^l\otimes \left(\lambda_jI_n + J_A^* \right)^{-l-1} \right) \\ &\quad=  \sum_{i=0}^{n_{j,q}-1}\sum_{l=0}^{n_{j,q}-1}(-1)^l\left( \left(S_{n_{j,q}}^T\right)^i \otimes D_{j,i} \right)\left(\left(S_{n_{j,q}}^T\right)^l\otimes \left(\lambda_jI_n + J_A^* \right)^{-l-1}\right) \\ &\quad =\sum_{i=0}^{n_{j,q}-1} \sum_{l=0}^{n_{j,q}-1}(-1)^l \left(S_{n_{j,q}}^T \right)^i\left(S_{n_{j,q}}^T\right)^l \otimes D_{j,i}  \left(\lambda_jI_n + J_A^* \right)^{-l-1} \\ &\quad = \sum_{i=0}^{n_{j,q}-1} \sum_{l=0}^{n_{j,q}-1}(-1)^l \left(S_{n_{j,q}}^T \right)^{i+l}\otimes D_{j,i}  \left(\lambda_jI_n + J_A^* \right)^{-l-1} \\ &\quad =  \sum_{i=0}^{n_{j,q}-1}\left(S_{n_{j,q}}^T\right)^i \otimes \sum_{l=0}^i(-1)^{i-l} D_{j,l}  \left(\lambda_jI_n + J_A^* \right)^{l-i-1}=R_{j,q}.
    \end{aligned}
\end{equation*}
Hence \begin{equation*}
    L_{A,B}=\left(P^T \otimes P^*\right)^{-1}\bigoplus_{j=1}^r\bigoplus_{q=1}^{k_j}R_{j,q}\left(P^T \otimes P^*\right).
\end{equation*}
Since $\{J_A^T\}_{\BC}''=\diag\left(\fT_{\un{n}_1,\BC}^-,\dots,\fT_{\un{n}_r,\BC}^-\right)$ is an algebra which is closed under inversion and conjugation, i.e., $\{J_A^T\}_{\BC}''=\{J_A^*\}_{\BC}''$,  it follows that for $j=1,\ldots,r$ and $i=0,\ldots,n_{j,q}-1$:
\[
F_{j,i}=\sum_{l=0}^{i}(-1)^{i-l} D_{j,l}  \left(\lambda_jI_n + J_A^* \right)^{l-i-1} \in \{J_A^T\}_{\BC}''
\]
From the fact that $F_{j,i}$ does not depend on $q,$
we see that \begin{equation*} R_{j,q}=\sum_{i=0}^{n_{j,q}-1}\left(S_{n_{j,q}}^T\right)^{i} \otimes F_{j,i} \end{equation*} is the $nn_{j,q} \times nn_{j,q}$ left upper corner of
\begin{equation*} R_{j,1}=\sum_{i=0}^{n_{j,1}-1}\left(S_{n_{j,1}}^T\right)^{i} \otimes F_{j,i},
\end{equation*}
for $j=1,\ldots,r$ and $q=1,\ldots,k_j$. Taking all of the above into consideration gives
\begin{equation*}
L_{A,B} =\left(P^T \otimes P^*\right)^{-1}\diag \left(R_1, ... ,R_r \right)\left(P^T \otimes P^*\right),
\end{equation*}
with $R_j$ as defined in the proposition.

It remains to determine the explicit formula for $F_{j,i}$. Note first that
\begin{align}
F_{j,i}&=\sum_{l=0}^{i}(-1)^{i-l} D_{j,l}  \left(\lambda_jI_n + J_A^* \right)^{l-i-1} \notag  \\
&= (-1)^iD_{j,0}\left(\lambda_jI_n + J_A^* \right)^{-i-1} + \sum_{l=1}^{i}(-1)^{i-l} D_{j,l}  \left(\lambda_jI_n + J_A^* \right)^{l-i-1} \notag \\
&=(-1)^i \left(t_{j,0}I_n+\widetilde{B}^*\right)\left(\lambda_jI_n + J_A^* \right)^{-i-1} + \sum_{l=1}^{i}(-1)^{i-l} t_{j,l}  \left(\lambda_jI_n + J_A^* \right)^{l-i-1} \notag \\
&=(-1)^i\widetilde{B}^*\left(\lambda_jI_n + J_A^* \right)^{-i-1} + \sum_{l=0}^{i}(-1)^{i-l} t_{j,l} \left(\lambda_jI_n + J_A^* \right)^{l-i-1},\label{Fji-mid}
\end{align}
with
\begin{align*} \widetilde{B}^*=\bigoplus_{a=1}^r\bigoplus_{b=1}^{k_a} T^*_{a,b}=\bigoplus_{a=1}^r\bigoplus_{b=1}^{k_a} \sum_{c=0}^{n_{a,b}-1} \ov{t}_{a,c}\left(S_{n_{a,b}}^T\right)^{c}.
\end{align*}
Furthermore, by making use of the formula for powers of the inverse of a Jordan block, cf., \cite[Page 70]{G14}, it follows for $i=0, \ldots, n_{j,q}-1$ that
\begin{align*}
\left(\lambda_jI_n + J_A^* \right)^{-i-1}&=\left(\bigoplus_{a=1}^r\bigoplus_{b=1}^{k_a}J_{n_{a,b}}^T\left(\la_j+\ov{\la}_a\right)  \right)^{-i-1}\\
&=\bigoplus_{a=1}^r\bigoplus_{b=1}^{k_a}\left(J_{n_{a,b}}\left(\la_j+\ov{\la}_a\right)^{-i-1}\right)^T \\
&=\bigoplus_{a=1}^r\bigoplus_{b=1}^{k_a}\left( \sum_{d=0}^{n_{a,b}-1}\binom{d+i}{d} \frac{(-1)^{d}}{\left(\la_j + \ov{\la}_a\right)^{d+i+1}}
S_{n_{a,b}}^d\right)^T\\
&=\bigoplus_{a=1}^r\bigoplus_{b=1}^{k_a} \sum_{d=0}^{n_{a,b}-1}\binom{d+i}{d} \frac{(-1)^{d}}{\left(\la_j + \ov{\la}_a\right)^{d+i+1}}\left(S_{n_{a,b}}^T\right)^d.
\end{align*}
Therefore, we have
\begin{align*}
&(-1)^i\widetilde{B}^*\left(\lambda_jI_n + J_A^* \right)^{-i-1}=\\
&\qquad = \bigoplus_{a=1}^r\bigoplus_{b=1}^{k_a} \left( \sum_{c=0}^{n_{a,b}-1} \ov{t}_{a,c}\left(S_{n_{a,b}}^T\right)^{c}\right) \left( \sum_{d=0}^{n_{a,b}-1}\binom{d+i}{d} \frac{(-1)^{d+i}}{\left(\la_j + \ov{\la}_a\right)^{d+i+1}}\left(S_{n_{a,b}}^T\right)^d\right)\\
&\qquad =  \bigoplus_{a=1}^r\bigoplus_{b=1}^{k_a} \sum_{c=0}^{n_{a,b}-1}  \sum_{d=0}^{n_{a,b}-1} \binom{d+i}{d} \frac{(-1)^{d+i}\ov{t}_{a,c}}{\left(\la_j + \ov{\la}_a\right)^{d+i+1}}\left(S_{n_{a,b}}^T\right)^{c+d}\\
&\qquad= \bigoplus_{a=1}^r\bigoplus_{b=1}^{k_a} \sum_{c=0}^{n_{a,b}-1}  \sum_{d=0}^{c} \binom{d+i}{d} \frac{(-1)^{d+i}\ov{t}_{a,c-d}}{\left(\la_j + \ov{\la}_a\right)^{d+i+1}}\left(S_{n_{a,b}}^T\right)^{c}
\end{align*}
and
\begin{align*}
&\sum_{l=0}^{i}(-1)^{i-l} t_{j,l} \left(\lambda_jI_n + J_A^* \right)^{l-i-1} = \\
&\qquad = \bigoplus_{a=1}^r\bigoplus_{b=1}^{k_a}  \sum_{l=0}^{i} t_{j,l} \sum_{c=0}^{n_{a,b}-1} \binom{c+i-l}{c}\frac{(-1)^{i-l+c}}{ \left(\la_j + \ov{\la}_a\right)^{c+i-l+1}}
\left(S_{n_{a,b}}^T\right)^c\\
&\qquad = \bigoplus_{a=1}^r\bigoplus_{b=1}^{k_a} \sum_{c=0}^{n_{a,b}-1}   \sum_{l=0}^{i} \binom{c+i-l}{c}\frac{(-1)^{i-l+c}t_{j,l}}{ \left(\la_j + \ov{\la}_a\right)^{c+i-l+1}}
\left(S_{n_{a,b}}^T\right)^c.
\end{align*}
Inserting the end results of the last two computations into \eqref{Fji-mid} yields \eqref{formula for scalars of D} and \eqref{formula for scalars of D2}.
\end{proof}

Via computations similar to those in the above proof one can show directly that
\begin{equation}\label{reflex}
f_{j,i}^{a,c}= \ov{f}_{a,c}^{j,i}
\end{equation}
which confirms item (iii) of Theorem \ref{T:*-linear}.

\subsection{Computing $\BH_{A,B}$ for $\BF=\BC$}

The Hill matrix of the $*$-linear map $\cL_{A,B}$ depends on a choice of matrices $L_1,\ldots,L_m$ so that their span corresponds to the span of the block entries of the matricization $L_{A,B}$. Via the relation between $L_{A,B}$ and $L_{J_A,\wtilB}$ obtained in Lemma \ref{L:LABBLAB}, where $L_{J_A,\wtilB}$ is the matricization of $\cL_{J_A,\wtilB}$, it follows that we just as well make a selection of the block entries of $L_{J_A,\wtilB}$, with the $L_1,\ldots,L_m$ being determined by \eqref{LABBLAB}. Using the Jordan structure of $A$ given by \eqref{Jor decomp for L}, for $L_{J_A,\wtilB}$ there is a natural choice of block entries to select, based on the formula in \eqref{formula for transpose C}, and it is the Hill matrix with respect to this choice that we will call the Hill-Pick matrix, denoted $\BH_{A,B}$, associated with $A$ and $B$. This selection of the block entries of $L_{J_A,\wtilB}$ consists of $w=n_{1,1}+\cdots + n_{r,1}$ entries and corresponds to the selection of indices given by
\begin{align*}
\Upsilon=&\left\{(1,1),\ldots,(n_{1,1},1),(n_1+1,n_1+1),\ldots,(n_1 +n_{2,1},n_1+1),\ldots\right.\\
&\qquad\qquad\left. \ldots,(n_1+\cdots+ n_{r-1}+1,n_1+\cdots+n_{r-1}+1),\ldots\right.\\
&\qquad\qquad\qquad\qquad\left. \ldots(n_1+\cdots+n_{r-1}+n_{r,1},n_1+\cdots+n_{r-1}+1) \right\},
\end{align*}
where for $j=1,\ldots,r$ we define $n_j=n_{j,1}+\cdots+n_{j,k_j}$. In the ``Toeplitz structure with possible repetition'' of $\{J_A^T\}_{\BC}'' \otimes \{J_A^T\}_{\BC}''$ as described in \eqref{formula for transpose C}, in which $L_{J_A,\wtilB}$ is included, the $j$-th set of indices, running from $(n_1+\cdots+ n_{j-1}+1,n_1+\cdots+n_{j-1}+1)$ to $(n_1+\cdots+ n_{j-1}+n_{j,1},n_1+\cdots+n_{j-1}+1)$, corresponds to taking the matrices in the first column of each of the lower triangular block Toeplitz matrices with possible repetition coming from $\fT^-_{\un{n}_j,\BC} \otimes \{J_A^T\}_{\BC}'' = \fT^-_{\un{n}_j,\{J_A^T\}_{\BC}''}$. 

We note here that the number $w$ may be larger than the rank of the Choi matrix, $m=\rank \BL_{A,B}$, so that $\BH_{A,B}$ may be singular. Our results on non-minimal Hill representations in Subsection \ref{SubS:NonMinHill}, in particular Theorem \ref{T:Hill non-min}, show that even in this case positive semidefiniteness of $\BH_{A,B}$ remains a necessary and sufficient criterium to check whether $\cL_{A,B}$ is (completely) positive, and hence whether $B$ Lyapunov dominates $A$.

For the matrix representation of $\BH_{A,B}$ it makes sense to divide the set $\Upsilon$ into $r$ disjoint subsets
\begin{align*}
&\Upsilon=\bigcap_{j=1}^r \Upsilon_j \quad \mbox{with}\ \
\Upsilon_j:=\{(n_1+\cdots+n_{j-1}+1,n_1+\cdots+n_{j-1}+1),\ldots\\
& \qquad\qquad\ldots , (n_1+\cdots+ n_{j-1}+n_{j,1},n_1+\cdots+n_{j-1}+1) \},
\end{align*}
and then divide $\BH_{A,B}$ accordingly
\begin{equation}\label{HillPick1}
\BH_{A,B}=\mat{\BH_{A,B}^{(i,j)}}_{i,j=1}^r \quad \mbox{with}\quad
\BH_{A,B}^{(i,j)}\in \BC^{n_{i,1} \times n_{j,1}}
\end{equation}
the matrix determined by the indices of $\Upsilon_i$ and $\Upsilon_j$.

Now note that the $(n_1+\cdots+n_{i-1}+a, n_1+\cdots+n_{i-1}+1)$-th block of $L_{A,B}$ corresponds to $F_{i,a-1}$, and within $F_{i,a-1}$ the $(n_1+\cdots+n_{j-1}+b, n_1+\cdots+n_{j-1}+1)$-th entry is equal to $f_{i,a-1}^{j,b-1}$. Hence, we have
\begin{equation}\label{HillPick2}
\BH_{A,B}^{(i,j)}=\mat{f_{j,b-1}^{i,a-1}}_{a=1,\ldots,n_{i,1}}^{b=1,\ldots,n_{j,1}},\quad\mbox{for }i,j=1,\ldots,r,
\end{equation}
with the matrix entries defined as in \eqref{formula for scalars of D2}. Note that \eqref{reflex} confirms that $\BH_{A,B}$ is Hermitian. In conclusion, we have proved the following result.
\begin{theorem}\label{T:HillPickmatrix}
Let $A,B\in\BC^{n \times n}$ with $A$ Lyapunov regular and $B\in\{A\}_{\BC}''$. Then the Hill-Pick matrix $\BH_{A,B}$ defined above is given by \eqref{HillPick1} and \eqref{HillPick2} with entries as in \eqref{formula for scalars of D2}. In particular, $B$ Lyapunov dominates $A$ if and only if the matrix $\BH_{A,B}$ is positive semidefinite.
\end{theorem}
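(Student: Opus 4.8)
The statement collects the ingredients assembled in this section, so the plan is to arrange them in the right order. First I would invoke the definition together with Theorem~\ref{main thm}: since $A$ is Lyapunov regular, $B$ Lyapunov dominates $A$ exactly when the $*$-linear map $\cL_{A,B}$ is positive, and because $B\in\{A\}_\BC''$, Theorem~\ref{main thm} upgrades this to complete positivity of $\cL_{A,B}$, i.e.\ to $\BL_{A,B}\ge 0$. By the second identity in \eqref{LABBLAB} one has $\BL_{A,B}=\left(P^T\otimes P^{-1}\right)^*\BL_{J_A,\widetilde{B}}\left(P^T\otimes P^{-1}\right)$ with $P^T\otimes P^{-1}$ invertible, so $\BL_{A,B}\ge 0$ if and only if $\BL_{J_A,\widetilde{B}}\ge 0$; hence $\cL_{A,B}$ is completely positive precisely when $\cL_{J_A,\widetilde{B}}$ is. (The latter map is again $*$-linear, as $J_A$ is Lyapunov regular and $\widetilde{B}\in\{J_A\}_\BC''$.) Everything thus reduces to showing that $\cL_{J_A,\widetilde{B}}$ is completely positive if and only if $\BH_{A,B}\ge 0$.

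For this I would feed the explicit form of the matricization $L_{J_A,\widetilde{B}}=\diag(R_1,\dots,R_r)$ from Proposition~\ref{constructing L_BL_A^{-1}}, with $R_{j,q}$ and $F_{j,i}$ as in \eqref{R for}--\eqref{F for}, into the non-minimal Hill machinery of Subsection~\ref{SubS:NonMinHill}. Each $R_{j,q}$ is a lower-triangular block-Toeplitz matrix whose nonzero $n\times n$ blocks are, up to repetition, the $F_{j,0},\dots,F_{j,n_{j,q}-1}$; since the tuples $\un{n}_j$ are decreasing we have $n_{j,q}\le n_{j,1}$, so every nonzero block of $L_{J_A,\widetilde{B}}$ is one of the $F_{j,i}$ with $0\le i\le n_{j,1}-1$, and all of these already occur in the first block column of $R_{j,1}$, i.e.\ at the block positions listed in $\Upsilon_j$. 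Consequently the block entries of $L_{J_A,\widetilde{B}}$ indexed by $\Upsilon=\Upsilon_1\cup\cdots\cup\Upsilon_r$ form a selection of the type \eqref{LkLijChoice non-min}, of size $w=n_{1,1}+\cdots+n_{r,1}\ge\rank \BL_{J_A,\widetilde{B}}$, whose span is the span of all block entries, so \eqref{span prop} holds. Applying Theorem~\ref{T:Hill non-min} to $\cL_{J_A,\widetilde{B}}$ with this selection and the standard choices \eqref{choice for A_k}--\eqref{choice for B_k} — which produce exactly the matrix we have named $\BH_{A,B}$ — then gives that $\cL_{J_A,\widetilde{B}}$ is completely positive if and only if $\BH_{A,B}\ge 0$. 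Combined with the reduction above, this is the equivalence in the theorem.

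It then remains to verify the explicit description \eqref{HillPick1}--\eqref{HillPick2}. Since under \eqref{choice for B_k} the matrices $B_k$ are the standard units $\cE^{(n,n)}_{i_kj_k}$, the computation in the proof of Lemma~\ref{L:HillMatSC} carries over verbatim to this non-minimal selection and shows that the $(k,l)$ entry of $\BH_{A,B}$ is read off as a single scalar entry of $L_{J_A,\widetilde{B}}$: writing the index $k$ as the $a$-th member of $\Upsilon_i$ and $l$ as the $b$-th member of $\Upsilon_j$, it is (up to the conjugation fixed by \eqref{reflex} and by the index order in \eqref{LBL-Hill non min}, which is immaterial for positive semidefiniteness) the entry of the block $F_{j,b-1}$ at the position corresponding to the top of the first block column of the $i$-th diagonal summand of \eqref{formula for scalars of D}, i.e.\ the coefficient of $\left(S_{n_{i,1}}^T\right)^{a-1}$ there, namely $f^{i,a-1}_{j,b-1}$, given explicitly by \eqref{formula for scalars of D2}. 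Re-indexing this array according to the partition of $\Upsilon$ into the $\Upsilon_i$ produces the block form \eqref{HillPick1}--\eqref{HillPick2}, and \eqref{reflex} exhibits the result as Hermitian.

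The hard part is not any single step but the bookkeeping in the last two paragraphs: one must confirm that the index set $\Upsilon$ genuinely selects a spanning family of block entries — for which the lower-triangular block-Toeplitz-with-repetition structure of $L_{J_A,\widetilde{B}}$, provided by \eqref{formula for transpose C} and Proposition~\ref{constructing L_BL_A^{-1}}, is the crucial input — and then keep track of three nested layers of indexing (the eigenvalue group, the position within a Jordan chain, and the Toeplitz exponent) so that the scalar $f^{i,a-1}_{j,b-1}$ lands in the correct slot of $\BH^{(i,j)}_{A,B}$. One should also note in passing that Lemma~\ref{L:HillMatSC}'s entrywise description of the Hill matrix remains valid for non-minimal selections, which it does precisely because its proof uses only the identity $B_k=\cE^{(n,q)}_{i_kj_k}$.
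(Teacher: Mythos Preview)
Your proposal is correct and follows essentially the same approach as the paper: the paper's proof is precisely the discussion preceding the theorem statement, which reduces via Lemma~\ref{L:LABBLAB} to $\cL_{J_A,\widetilde{B}}$, selects the index set $\Upsilon$ from the block-Toeplitz structure of $L_{J_A,\widetilde{B}}$ given by Proposition~\ref{constructing L_BL_A^{-1}}, invokes Theorem~\ref{T:Hill non-min} for the (possibly non-minimal) Hill representation, and reads off the entries $f^{i,a-1}_{j,b-1}$. You are somewhat more explicit than the paper about why the Choi-matrix congruence in \eqref{LABBLAB} lets one pass between $\cL_{A,B}$ and $\cL_{J_A,\widetilde{B}}$, and about why Lemma~\ref{L:HillMatSC} still applies in the non-minimal setting, but the route is the same.
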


Next we consider two simple examples, starting with the case where $A$ is a diagonal matrix.

\begin{example}
Let $A=\diag \left(\la_1, \ldots, \la_n\right)\in \BC^{n \times n}$ be Lyapunov regular, meaning $\la_i+\ov{\la}_j\neq 0$ for all $i,j$. Then $B \in \{A\}''_{\BC}$ is of the form \[B=\diag \left(t_1, \ldots, t_n\right) \quad \text{where} \quad t_i=t_j \quad \text{whenever} \quad \la_i=\la_j.\]
For $L_{A,B}$ it follows that
\begin{align*}L_{A,B}&=\diag\left(L_1, \ldots, L_n\right) \quad \text{where} \quad L_i=\diag\left(\ell_{i,1}, \ldots, \ell_{i,n}\right) \in\BC^{n \times n} \quad \text{and} \\ \ell_{i,j}&:=\frac{\ov{t}_j+t_i}{\ov{\la}_j+\la_i} \quad \text{ for all } i,j=1, \ldots,n. \end{align*}
In case $\la_i \ne \la_j$ for all $i,j$, the matrix $\BH_{A,B}$ is equal to the Pick matrix of \cite{YS67}
with $(i,j)$-th entry given by
\[\frac{\ov{t}_i+t_j}{\ov{\la}_i+\la_j} \quad \text{for} \quad i,j=1, \ldots,n.\]
For the case where $\la_i=\la_j$ for some $i,j=1, \ldots,n$ it follows that the linearly dependent rows and corresponding columns in the Pick matrix are removed to produce $\BH_{A,B}.$
\end{example}

Next we consider the case of a single Jordan block.

\begin{example}
Take $A=J_n(\la)$ with $\la$ having a nonzero real part. Then $A$ is Lyapunov regular. We have $\left\{A\right\}''_{\BC}=\fT_{n,\BC}^{+}$. Hence $B \in \left\{A\right\}_{\BC}''$ has the form $B=\sum_{k=0}^{n-1}t_kS_n^{k}$ for $t_k\in\BC$. Following Proposition \ref{constructing L_BL_A^{-1}}, where the indices $j$ and $q$ do not exist, we get
\begin{align*}
L_{A,B}&=\sum_{i=0}^{n-1}\left(S_n^T\right)^{i} \otimes F_{i} \quad \text{where} \quad F_i=\sum_{l=0}^{i}(-1)^{i-l}D_{l}\left(\la I_n + A^*\right)^{l-i-1};
\end{align*}
here $D_0=t_0I_n+B^*$ and $D_l=t_{l}I_n$ for $l=1,\ldots, n-1$. Making use of the formulas in \eqref{formula for scalars of D} and \eqref{formula for scalars of D2} yields
\begin{align*}
F_i&=\sum_{c=0}^{n-1}\left( \sum_{d=0}^{c} \binom{d+i}{d}\frac{(-1)^{d+i}\ov{t}_{c-d}} {\left(\la + \ov{\la}\right)^{d+i+1}} \right. \\  &\left.\qquad \qquad \qquad + \sum_{l=0}^{i} \binom{c+i-l}{c} \frac{(-1)^{i-l+c}t_{l}}{\left(\la + \ov{\la}\right)^{c+i-l+1}}\right)\left(S_n^T\right)^{c}.
\end{align*}

\noindent Following the construction of $\BH_{A,B}$ above gives
\begin{align*} \BH_{A,B} :=\left[\begin{array}{ccccccccccc} F_0e_1 & F_1e_1 & \hdots & F_{n-1}e_1  \end{array}\right] \in \BC^{n \times n},
\end{align*}
with $p,q$-th entry given by (set $i+1=q$ and $c=p-1$)
\begin{align*}
\left[\BH_{A,B}\right]_{pq}&= \sum_{d=0}^{p-1} \binom{d+q-1}{d}\frac{(-1)^{d+q-1}\ov{t}_{p-1}} {\left(\la + \ov{\la}\right)^{d+q}} \\ &\qquad \qquad \qquad \qquad + \sum_{l=0}^{q-1} \binom{p+q-2-l}{p-1} \frac{(-1)^{q-l+p-2}t_{l}}{\left(\la + \ov{\la}\right)^{p+q-l-1}}.
\end{align*}
\end{example}

\subsection{The case $\BF=\BR$}
When $\BF=\BR$ a similar strategy can be followed as for the complex case, albeit more complicated because one has to distinguish between real and pairs of complex eigenvalues. We present here only the basic details, without going through the computation done in the previous subsection for the case $\BF=\BC$.

Let the Lyapunov regular $A \in \BR^{n \times n}$ be given in Jordan form \eqref{JordanReal}, that is, with (distinct) complex eigenvalues $\la_1,\ldots,\la_{r_1}$ and (distinct) real eigenvalues $\tau_1,\ldots,\tau_{r_2}$, the decomposition of $A$ of the form $A=P\,J_A\,  P^{-1},$ where
\begin{equation}\label{JordanReal for L}
 J_A=\diag\left(J_{\un{m}_1}\left(C_{\la_1}\right),\ldots,J_{\un{m}_{r_1}}\left(C_{\la_{r_1}}\right),J_{\un{n}_1}\left(\tau_1\right),\ldots , J_{\un{n}_{r_2}}\left(\tau_{r_2}\right)\right)
\end{equation}
with $P\in\textup{GL}(n,\BR)$ and $\un{n}_j=\left(n_{j,1},\ldots,n_{j,k_j}\right)\in\BN^{k_j}$, $\un{m}_s=\left(m_{s,1},\ldots,m_{s,l_s}\right)\in \BN^{l_s}$, both ordered decreasingly, for some $k_j,l_s\in\BN$ and for $s=1,\ldots,r_1$ and $j=1,\ldots,r_2$. Now let $B\in\{A\}_\BR''$, so that $B$ is of the form described in Subsection \ref{SubS:bicommA}, that is,
\[
B= P \wtilB P^{-1}=
 P\,\diag\left(R_1,\ldots,R_{r_1},T_1,\dots , T_{r_2}\right)\,  P^{-1}
\]
with, for $s=1,\ldots,r_1$ and $j=1,\ldots,r_2$,
\begin{align*}
R_s=\diag(R_{s,1},\ldots,R_{s,l_s})\in \fT^+_{\un{m}_s,\fC} \ands
T_j=\diag(T_{j,1},\ldots,R_{j,k_j})\in \fT^+_{\un{n}_j,\BR},
\end{align*}
given by, for $p=1,\ldots,l_s$ and $q=1,\ldots,k_j$,
\begin{align*}
R_{s,p}=\sum_{v=0}^{m_{s,p}-1}S_{m_{s,p}}^v \otimes C_{\la_{s,v}}, \ands
T_{j,q}=\sum_{v=0}^{n_{j,q}-1} t_{j,v} S_{n_{j,q}}^v,
\end{align*}
where $\la_{s,v}=\al_{s,v}+ i \be_{s,v}\in\BC$ and $t_{j,v}\in\BR$.

Again, the Hill matrix of the $*$-linear map $\cL_{A,B}$ depends on a choice of matrices $L_1,\ldots,L_m$ so that their span corresponds to the span of the block entries of the matricization $L_{A,B}$, which we can select based on the Jordan structure of $A\in\BR^{n\times n}$, working with $L_{J_A,\wtilB}$ rather that with $L_{A,B}$. In this case the selection of the block entries of $L_{J_A,\wtilB}$ consists of $w=2m_{1,1}+\cdots+2m_{r_1,1}+n_{1,1}+\cdots + n_{r_2,1}$ entries and corresponds to the selection of indices given by
\begin{align*}
\Upsilon=&\left\{(1,1),\ldots,(2m_{1,1},1),(2m_1+1,2m_1+1),\ldots,(2m_1 +2m_{2,1},2m_1+1),\ldots\right.\\
&\qquad\left. \ldots,(2m_1+\cdots+ 2m_{r_1-1}+1,2m_1+\cdots+2m_{r_1-1}+1),\ldots\right.\\
&\qquad\qquad\qquad\left. \ldots(2m_1+\cdots+2m_{r_1-1}+2m_{r_1,1},2m_1+\cdots+2m_{r_1-1}+1),\right.\\&\qquad\qquad\qquad\left. (1,1),\ldots,(n_{1,1},1),(n_1+1,n_1+1),\ldots,(n_1 +n_{2,1},n_1+1),\ldots\right.\\
&\qquad\qquad\left. \ldots,(n_1+\cdots+ n_{r_2-1}+1,n_1+\cdots+n_{r_2-1}+1),\ldots\right.\\
&\qquad\qquad\qquad\qquad\left. \ldots(n_1+\cdots+n_{r_2-1}+n_{r_2,1},n_1+\cdots+n_{r_2-1}+1)  \right\},
\end{align*}
where for $s=1,\ldots,r_1$ and $j=1,\ldots,r_2$ we define $m_s=m_{s,1}+\cdots+m_{s,l_s}$ and $n_j=n_{j,1}+\cdots+n_{j,k_j}$. As in the complex case, we may have $w> m=\rank \BL_{A,B}$, but the Hill-Pick matrix criterium remains both necessary and sufficient.

For the matrix representation of $\BH_{A,B}$ it makes sense to divide the set $\Upsilon$ into $r:=r_1+r_2$ disjoint subsets \begin{align*}
\Upsilon&= \Upsilon_1 \bigcup \Upsilon_2 \quad \text{where} \quad
\Upsilon_1=\bigcup_{i=1}^{r_1} \Upsilon_i^{(1)} \ands \Upsilon_2=\bigcup_{i=1}^{r_2} \Upsilon_i^{(2)} \quad \mbox{with} \\
\Upsilon_i^{(1)}&:=\{(2m_1+\cdots+ 2m_{i-1}+1,2m_1+\cdots+2m_{i-1}+1),\ldots\\
& \qquad \qquad \quad \qquad \ldots , (2m_1+\cdots+ 2m_{i-1}+2m_{i,1},2m_1+\cdots+2m_{i-1}+1) \},\\ \Upsilon_i^{(2)}&:=\{(n_1+\cdots+ n_{i-1}+1,n_1+\cdots+n_{i-1}+1),\ldots\\
& \qquad \quad \qquad\qquad\qquad \qquad  \ldots , (n_1+\cdots+ n_{i-1}+n_{i,1},n_1+\cdots+n_{i-1}+1) \}.
\end{align*}
We can then divide $\BH_{A,B}$ according to this partition of $\Up$ as
\begin{align}\label{HillPickreal}
\BH_{A,B}&=\begin{bmatrix} {}^1\BH_{A,B} & {}^2\BH_{A,B}\\ {}^2\BH_{A,B}^T & {}^3\BH_{A,B}\end{bmatrix} \in \BR^{w \times w},\ \mbox{where}\\
{}^1\BH_{A,B}=\mat{{}^1\BH_{A,B}^{(i,j)}}_{i,j=1}^{r_1},\quad
{}^2\BH_{A,B}&=\mat{{}^2\BH_{A,B}^{(i,j)}}_{i=1, \ldots, r_1}^{j=1,\ldots,r_2},\quad
{}^3\BH_{A,B}=\mat{{}^3\BH_{A,B}^{(i,j)}}_{i,j=1}^{r_2},\notag \\
\mbox{ with }\ {}^1\BH_{A,B}^{(i,j)}\in \BR^{2m_{i,1} \times 2m_{j,1}},&\quad
{}^2\BH_{A,B}^{(i,j)}\in \BR^{m_{i,1} \times n_{j,1}},\quad
{}^3\BH_{A,B}^{(i,j)}\in \BR^{n_{i,1} \times n_{j,1}}\notag
\end{align}
the matrix determined by the indices of $\Upsilon_i^{(1)}$ and $\Upsilon_j^{(1)}$, of $\Upsilon_i^{(1)}$ and $\Upsilon_j^{(2)}$, and of $\Upsilon_i^{(2)}$ and $\Upsilon_j^{(2)}$, respectively. The scalar entries of the matrices ${}^1\BH_{A,B}^{(i,j)}$, ${}^2\BH_{A,B}^{(i,j)}$ and ${}^3\BH_{A,B}^{(i,j)}$ can again be computed explicitly via an explicit computation of $L_{A,B}$, similarly, though significantly more laborious, as was done in Theorem \ref{constructing L_BL_A^{-1}} for the complex case, and then choosing the entries according to the selection of indices in $\Upsilon$.

\begin{theorem}\label{T:HillPickmatrixreal}
Let $A,B\in\BR^{n \times n}$ with $A$ Lyapunov regular and $B\in\{A\}_{\BR}''$. Then the Hill-Pick matrix $\BH_{A,B}$ defined above is given by \eqref{HillPickreal} 
In particular, $B$ Lyapunov dominates $A$ if and only if the matrix $\BH_{A,B}$ is positive semidefinite.
\end{theorem}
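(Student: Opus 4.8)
The plan is to run the same machine that produced the complex case, Theorem~\ref{T:HillPickmatrix}, but starting from the real Jordan decomposition \eqref{JordanReal for L} of $A$ and the corresponding block form of $B\in\{A\}_{\BR}''$ recorded just above the statement. First I would reduce to the case $A=J_A$, $B=\wtilB$: the real analogue of Lemma~\ref{L:LABBLAB} holds verbatim, giving $L_{A,B}=(P^T\otimes P^*)^{-1}L_{J_A,\wtilB}(P^T\otimes P^*)$ and $\BL_{A,B}=(P^T\otimes P^{-1})^*\BL_{J_A,\wtilB}(P^T\otimes P^{-1})$ via the $4$-modularity property of \cite[Proposition 2.1]{P19}; its proof only uses \eqref{kron prop} and invertibility of $\cL_{J_A}$ (which holds since $A$, hence $J_A$, is Lyapunov regular). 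Since $P$ is real, $P^T\otimes P^*$ is invertible, so a selection $L_1,\dots,L_w$ of block entries of $L_{J_A,\wtilB}$ transports under \eqref{LABBLAB} to matrices spanning the same subspace as the block entries of $L_{A,B}$, and hence produces the same (non-minimal) Hill matrix; this legitimises working with $L_{J_A,\wtilB}$.

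Next I would compute $L_{J_A,\wtilB}=L_{\wtilB}L_{J_A}^{-1}$ explicitly. Here $L_{J_A}=J_A^T\otimes I_n+I_n\otimes J_A^*$ and $L_{\wtilB}=\wtilB^T\otimes I_n+I_n\otimes\wtilB^*$, and by Theorem~\ref{main thm} (real case) $L_{J_A,\wtilB}\in\ov{\{J_A^*\}_{\BR}''}\otimes\{J_A^*\}_{\BR}''$, whose structure is given by \eqref{formula for transpose R}: a block-diagonal sum of lower-triangular block Toeplitz matrices with possible repetition, indexed by the $r_1$ complex and $r_2$ real eigenvalue groups, with blocks from $\fC\otimes\{J_A^T\}_{\BR}''$ respectively $\{J_A^T\}_{\BR}''$. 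As in the proof of Proposition~\ref{constructing L_BL_A^{-1}}, $L_{J_A}$ and $L_{\wtilB}$ decompose as orthogonal sums over the eigenvalue groups and then over the tuples $\un{m}_s$, $\un{n}_j$; within each block one inverts a matrix of the form $S_{p}^T\otimes I + I\otimes(\text{scalar block of }J_A^*)$ by the finite Neumann series $\sum_{l}(-1)^l (S_p^T)^l\otimes(\cdots)^{-l-1}$, multiplies by the Toeplitz part of $\wtilB^T$, collects powers of $S_p^T$, and for the complex groups carries along the extra $\fC$-tensor factor exactly as $\fT^+_{n,\fC}$ was handled in Lemma~\ref{alg lower triangular toeplitz real}. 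This yields closed expressions for the coefficient blocks of $L_{J_A,\wtilB}$, and then $\BH_{A,B}$ is read off by selecting, for each eigenvalue group, the first block column of each repeated block Toeplitz factor — precisely the index set $\Upsilon=\Upsilon_1\cup\Upsilon_2$ defined above — giving the block partition \eqref{HillPickreal}.

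Finally, once $\BH_{A,B}$ is identified as the Hill matrix of $\cL_{A,B}$ for this selection, the equivalence ``$B$ Lyapunov dominates $A$ $\iff$ $\BH_{A,B}\ge 0$'' follows by assembling results already in hand: Theorem~\ref{main thm} gives that $\cL_{A,B}$ is positive iff completely positive; Theorem~\ref{T:Hill non-min} (applicable because the selection $L_1,\dots,L_w$ uses the normalising choices \eqref{choice for B_k}--\eqref{choice for A_k}, so $B_k=\mathcal{E}^{(n,q)}_{i_kj_k}$) gives that complete positivity of $\cL_{A,B}$ is equivalent to positive semidefiniteness of this Hill matrix even when $w>m=\rank\BL_{A,B}$; and the discussion of \S\ref{S:HillPick} identifies positivity of $\cL_{A,B}$ with $A\le_\cL B$. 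That $\BH_{A,B}$ is Hermitian (here symmetric) is forced by Theorem~\ref{T:*-linear}(iii), equivalently by the real analogue of \eqref{reflex}. The main obstacle is purely computational: keeping the bookkeeping straight in the simultaneous presence of real eigenvalue blocks (scalar Toeplitz over $\BR$) and complex eigenvalue blocks (Toeplitz over $\fC$, i.e.\ with an extra $2\times 2$ tensor slot), so that the off-diagonal mixed blocks ${}^2\BH_{A,B}^{(i,j)}\in\BR^{m_{i,1}\times n_{j,1}}$ and the sizes in \eqref{HillPickreal} come out consistently; no new idea beyond the complex case is needed, only care with the real Jordan structure and with the canonical shuffle identities \eqref{CanShuffleIds} when passing between the $\fC$-tensor and $2\times2$-block pictures.
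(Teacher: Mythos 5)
Your proposal is correct and follows essentially the same route as the paper, which itself only sketches the real case: reduce to $J_A$ and $\wtilB$ via the (real) analogue of Lemma~\ref{L:LABBLAB}, redo the block-Toeplitz computation of Proposition~\ref{constructing L_BL_A^{-1}} with the extra $\fC$-factor for complex eigenvalue groups, select the entries indexed by $\Upsilon$, and conclude with Theorem~\ref{main thm} together with the non-minimal Hill representation result Theorem~\ref{T:Hill non-min}. The assembly of the final equivalence (positivity $\Leftrightarrow$ complete positivity $\Leftrightarrow$ $\BH_{A,B}\ge 0$, and positivity of $\cL_{A,B}$ $\Leftrightarrow$ $A\le_\cL B$) is exactly what the paper intends.
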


\subsection{The Stein order}

As mentioned in the introduction, there is an analogous matrix order associated with Nevanlinna-Pick interpolation in the unit disk, based on the Stein inequality; cf., Remark 2.6 in \cite{BtH10}. For $A\in\BF^{n \times n}$, the Stein inequality asks for a $H\in \cH_n$ so that $H-AHA^*\in \ov{\cP}_n$. Associated with this is the $*$-linear map $\cL_A(X)=X-A X A^*$, which has a matricization $L_A=I_{n^2}-\ov{A} \otimes A$ and is invertible whenever the eigenvalues $\la_1,\ldots,\la_n$ of $A$ satisfy $\la_i\ov{\la}_j\neq 0$, $i,j=1\ldots,n$. For $A,B\in\BF^{n \times n}$ so that $\cL_A$ is invertible, one then says that $B$ Stein dominates $A$ if each Stein solution $H\in \cH_n$ of $A$ is also a Stein solution of $B$, or equivalently, $\cL_B \cL_A^{-1}$ is a positive map. If one further restricts to $B\in\{A\}_{\BF}''$, reasoning as in the proof of Theorem \ref{main thm} it follows that $L_A$, $L_B$, $L_A^{-1}$ and consequently $L_B L_A^{-1}$ are all in $\ov{\{A\}_{\BF}''}\otimes \{A\}_{\BF}''$, leading to the observation that $\cL_B \cL_A^{-1}$ is a positive map precisely when it is completely positive. Hence, checking positive semidefiniteness of the corresponding Choi matrix provides as necessary and sufficient criteria for Stein domination and one can further employ Hill representations to determine a more efficient matrix criteria, which will correspond to the classical Pick matrix criteria in the case of the diagonal matrices.

\medskip

\paragraph{\bf Acknowledgments}
This work is based on research supported in part by the National Research Foundation of South Africa (NRF) and the DSI-NRF Centre of Excellence in Mathematical and Statistical Sciences (CoE-MaSS). Any opinion, finding and conclusion or recommendation expressed in this material is that of the authors and the NRF and CoE-MaSS do not accept any liability in this regard.



\begin{thebibliography}{10}

\bibitem{A04}
T. Ando, Sets of matrices with common Stein solutions and H-contractions, {\em Linear Algebra Appl.} {\bf 383} (2004), 49--64.

\bibitem{A01}
T. Ando, Sets of matrices with a common Lyapunov solution, {\em Archiv der Mathematik} {\bf 77} (2001), 76--84.

\bibitem{BtH10}
J.A. Ball and S. ter Horst, Robust control, multidimensional systems and multivariable Nevanlinna-Pick interpolation, in:\ {\em Topics in Operator Theory}, pp.\ 13--88, {\em Oper.\ Theory Adv.\ Appl.} {\bf 203}, Birkh\"auser, Basel, 2010.

\bibitem{BMV18}
J.A. Ball, G. Marx, and V. Vinnikov, Interpolation and transfer-function realization for the noncommutative Schur-Agler class, in:\ {\em Operator theory in different settings and related applications}, pp.\ 23--116, {\em Oper.\ Theory Adv.\ Appl.} {\bf 262}, Birkh\"auser/Springer, Cham, 2018.

\bibitem{BG15}
S. Bialas and M. G\'{o}ra,  On the existence of a common solution to the Lyapunov equations, {\em Bull.\ Pol.\ Acad.\ Sci.\ Tech.\ Sci.} {\bf 63} (2015), 163--168.

\bibitem{CL09}
N. Cohen and I. Lewkowicz, The Lyapunov order for real matrices, {\em Linear Algebra Appl.} {\bf  430} (2009), 1849--1866.

\bibitem{CL07}
N. Cohen and I. Lewkowicz, Convex invertible cones and positive real analytic functions, {\em Linear Algebra Appl.} {\bf 425} (2007), 797--813.

\bibitem{CL03}
N. Cohen and I. Lewkowicz, A pair of matrices sharing common Lyapunov solutions - a closer
look, {\em Linear Algebra Appl.} {\bf 360} (2003), 83--104.

\bibitem{CL97}
N. Cohen and I. Lewkowicz, Convex invertible cones of state space systems, {\em Control Signals Systems} {\bf 10} (1997), 265--286.

\bibitem{DP00}
G.E. Dullerud and F. Paganini, {\em A Course in Robust Control Theory:\ A Convex Approach}, Texts in Applied Mathematics {\bf 36}, Springer-Verlag, New York, 2000.

\bibitem{C66}
C.G. Cullen, {\em Matrices and linear transformations}, Addison-Wesley Publishing Co., Reading, Mass.-London-Don Mills, Ont., 1966.

\bibitem{H98}
D. Hershkowitz, On cones and stability, {\em Linear Algebra Appl.} {\bf 275/276} (1998), 249--259.

\bibitem{H69}
R.D. Hill, Inertia theory for simultaneously triangulable complex matrices, {\em Linear Algebra Appl.} {\bf  2} (1969), 131--142.

\bibitem{H73}
R.D. Hill, Linear transformations which preserve hermitian matrices, {\em Linear Algebra Appl.} {\bf 6} (1973), 257--262.

\bibitem{HJ85}
R.A. Horn and C.R. Johnson, {\em Matrix Analysis,} Cambridge U.P., Cambridge, 1985.

\bibitem{HJ91}
R.A. Horn and C.R. Johnson, {\em Topics in Matrix Analysis,} Cambridge U.P., Cambridge, 1991.

\bibitem{tHvdM21b}
S. ter Horst and A. van der Merwe, Hill representations for $*$-linear matrix maps, {\em Indag.\ Math.}, to appear.

\bibitem{tHvdM21c}
S. ter Horst and A. van der Merwe, Linear matrix maps for which positivity and complete positivity coincide, {\em Linear Algebra Appl.} \textbf{628} (2021), 140--181.

\bibitem{KMcCSZ19}
I. Klep, S. McCullough, K. \u{S}ivic, and A. Zalar, There are many more positive maps than completely positive maps, {\em Int.\ Math.\ Res.\ Not.\ IMRN} 2019, no.\ {\bf 11}, 3313--3375.

\bibitem{KMP00}
 M. Konstantinov, V. Mehrmann, and P. Petkov, On properties of Sylvester and Lyapunov operators, {\em Linear Algebra Appl.} {\bf 312} (2000), 35--71.

\bibitem{LS09}
T.J. Laffey and H. \u{S}migoc, Sufficient conditions on commutators for a pair of stable matrices to have a common solution to the Lyapunov equation, {\em SIAM J. Matrix Anal.\ Appl.} {\bf 31} (2009/10), 2017--2028.

\bibitem{LS07}
T.J. Laffey and H. \u{S}migoc, Tensor conditions for the existence of a common solution to the Lyapunov equation {\em Linear Algebra Appl.} {\bf 420} (2007), 672--685.

\bibitem{MS05}
O. Mason and R. Shorten, The geometry of convex cones associated with the Lyapunov inequality and the common Lyapunov function problem {\em Electron.\ J. Linear Algebra} {\bf 12} (2004/05), 42--63.

\bibitem{G14}
G. Miller, {\em Numerical Analysis for Engineers and Scientists,} Cambridge U.P., Cambridge, 2014.

\bibitem{MS12}
P.S. Muhly and B. Solel, Absolute continuity, interpolation and the Lyapunov order, {\em Integr.\ Equ.\ Oper.\ Theory} {\bf 72} (2012), 91--101.


\bibitem{P19}
J.E. Pascoe, The outer spectral radius and dynamics of completely positive maps, preprint, arXiv:1905.09895.

\bibitem{PH81}
J.A. Poluikis and R.D. Hill, Completely positive and Hermitian-preserving linear transformations, {\em Linear Algebra Appl.} {\bf 35} (1981), 1--10.

\bibitem{W16}
H.J. Woerdeman, {\em Advanced linear algebra}, Textbooks in Mathematics, CRC Press, Boca Raton, FL, 2016.

\bibitem{YS67}
D.C. Youla and M. Saito, Interpolation with positive real-functions, {\em J. Franklin Inst.} \textbf{284} (1967), 77--108.

\bibitem{ZDG96}
K. Zhou, J.C. Doyle and K. Glover, {\em Robust and Optimal Control}, Prentice-Hall, Upper Saddle River, NJ, 1996.


\end{thebibliography}
\end{document}